\documentclass[runningheads,12pt]{llncs}
\usepackage{geometry}
 \geometry{
 a4paper,
 left=20mm,
 right=18mm,
 top=18mm,
 bottom=20mm,
 }
\usepackage{wrapfig,booktabs}
\usepackage{amssymb}
\usepackage{amsmath}
\usepackage{graphicx}
\usepackage{subfigmat}
\usepackage{url}
\usepackage{amsfonts}
\usepackage{latexsym} 
\usepackage{wasysym}
\usepackage{newlfont}
\usepackage{leqno}
\usepackage{etoolbox}

\usepackage{lineno}
\usepackage[usenames]{color}
\usepackage{fancybox}
\usepackage{sidecap}
\usepackage{xcolor, soul}
\definecolor{airforceblue}{rgb}{0.90, 0.44, 0.86}
\definecolor{ashgrey}{rgb}{0.7, 0.75, 0.71}
\sethlcolor{airforceblue}
\usepackage{multirow}
\usepackage[printonlyused]{acronym}
\newcommand*{\BLACKBOX}{\hfill\ensuremath{\blacksquare}}

\usepackage{stmaryrd}

\usepackage{xifthen}
\newcommand{\sMP}[1][]{\ifthenelse{\isempty{#1}}{^{(i)}}{^{(#1)}}}

\everymath{\displaystyle}
{\begin{Sbox}\begin{minipage}}%
{\end{minipage}\end{Sbox}\fbox{\TheSbox}}

\newtheorem{assume}{Assumption}

\numberwithin{equation}{section}
    \urldef{\mailsd}\path|ioannis.toulopoulos@ricam.oeaw.ac.at|
    \urldef{\mailse}\path|christoph.hofer@jku.at|
    \newcommand{\keywords}[1]{\par\addvspace\baselineskip  
     \noindent\keywordname\enspace\ignorespaces#1}

\begin{document}
\mainmatter
\title{  Discontinuous Galerkin  Isogeometric Analysis for 
segmentations generating  overlapping regions.
          }
\titlerunning{DG-IGA on multipatch representations with  overlapping regions. }
\author{
Christoph Hofer$^1$
\and Ioannis Toulopoulos$^{2}$
}
\authorrunning{C. Hofer,   I. Toulopoulos }

\institute{ $1$ Johannes Kepler University (JKU),\\
				Altenbergerstr. 69, A-4040 Linz, Austria,\\[1ex]
	    $2$ Johann Radon Institute for Computational and Applied Mathematics (RICAM),\\
				Austrian Academy of Sciences\\
				Altenbergerstr. 69, A-4040 Linz, Austria,\\[1ex]
\mailse\\
\mailsd  
 }

\noindent
\maketitle
\begin{abstract}
\textcolor{black}
{
In the Isogeometric Analysis (IGA) framework, the computational domain has  very often a multipatch  
representation.  The multipatch domain can be obtained by a volume segmentation of a boundary represented domain, e.g.,  provided by a Computer Aided Design (CAD) model. 
Typically, small gap and overlapping regions can appear at the patch interfaces of such multipatch representations. 
In the current work we consider multipatch representations having only  small overlapping regions between the patches. 
  We develop a Discontinuous Galerkin (DG)- IGA method which can be immediately   applied to these representations. 
Our method  appropriately connects the fluxes of the one face of the overlapping region with the flux of the opposite  face. 
We provide a theoretical justification of our approach by splitting   the whole  
error into two components: the first is related to the incorrect representation  of the patches  ({consistency error}) and the  
second  to the approximation properties of the  IGA space.
We show bounds for both  components of the error. We verify the theoretical error estimates in a series of 
numerical examples. 
}
\keywords{ Elliptic diffusion problems, 
		Heterogeneous diffusion coefficients, 
          	Isogeometric Analysis,
		Non-matching parametrized interfaces,
		Overlapping patches, 
		Discontinuous Galerkin methods,
		Consistency error.
		 }
\end{abstract}

\section{Introduction}
Isogeometric Analysis (IGA) has been introduced in \cite{LT:HughesCottrellBazilevs:2005a} as a new methodology for solving numerically Partial Differential Equations (PDE). 
The key idea of the IGA concept
is to use  the superior finite dimensional spaces, which are used
in Computer Aided Design (CAD), e.g., B-splines, NURBS, for both the exact representation of the computational domain $\Omega$ and 
discretizing the PDE problem. Since this work, many applications of the IGA methodology to several fields have been discussed 
in several papers, see, e.g.,   the monograph   \cite{LT:Hughes_IGAbook_2009} and the references within, as well as
the survey paper \cite{VeigaBuffaSangalli_2014}. From a computational point of view,  we can say that 
the numerical algorithm for constructing  the B-spline (or NURBS) basis functions is quite simple. This 
 helps to produce  high order approximate solutions. 
From the theoretical point of view, the fundamental approximation properties of the B- spline spaces 
 on a reference domain are discussed in \cite{LT:Shumaker_Bspline_book}. The approximation properties of the 
 mapped B-spline (or NURBS) spaces are discussed in several papers, 
see e.g.,  \cite{LT:Bazilevs_IGA_ERR_ESti2006a}, \cite{TagliabueQuart_2014}, \cite{VeigaBuffaSangalli_2014}, \cite{LT:LangerToulopoulos:2014a}.
 \par
 \textcolor{black}
 {
Let us consider  a complex domain $\Omega$ where its boundary is prescribed by CAD models. 
The CAD models can not be directly used  in IGA in order  to discretise the PDE problems.  We need to create volumetric patch parametrizations from the CAD models.  
  The  boundary represented  domain is first segmented into a collection of suitable blocks and consequently a parametrization procedure is applied to each block. 
  This produces  
 the volumetric multipatch representation  $\cup_{i=1}^N \overline{\Omega_i}$ of $\overline{\Omega}$ suitable for IGA.  
Several  segmentation algorithms  and 
associated parametrization procedures have been discussed in the literature, see, e.g., \cite{Hoschek_Lasser_CAD_book_1993}, \cite{HLT:PauleyNguyenMayerSpehWeegerJuettler:2015a},\cite{HLT:JuettlerKaplNguyenPanPauley:2014a}, \cite{xu_mourain_duvingeau_Galligo_CAD_2013a,xu_mourain_duvingeau_Galligo_JCP_2013b}. 
Furthermore, we refer to  \cite{xu_mourain_bordas_CMAME_2018a}, \cite{Jutler_Falini_Spech_2015} and   \cite{BUCHEGGER_Jutler2017a} for different approaches for constructing   IGA planar parametrizations without 
utilizing segmentation algorithms. 
We mention the segmentation approach presented in \cite{HTL:NguyenPauleyJuettler:2014a}, and  \cite{HLT:JuettlerNguyenPauley:2016a}, from which, we have been motivated to present the current work. 
The main idea   is to split the given boundary represented domain, using a spline curve (or face in 3d case) with 
 the following properties: (i) must have the end points on the boundary and the 
 tangents to be specified, (ii) the curve is reasonably regular and does not intersect the boundary of the domain, (iii) the curve cuts the domain into new   subdomains with good shapes. Consequently,  tensor-product B-spline spaces are fitted
 in the collection of  the subdomains for defining the tensor product B-spline surfaces or volumes \cite{Jutler_Falini_Spech_2015}. 
Note that the previous consideration is also concerns   CAD models that are connected along  a non-matching interface. 
 It is  important  to obtain a  curve that  splits 
$\Omega$ into new simple domains with good shapes being suitable for IGA.
During the computation of the multipatch representation,  errors can occur when defining  the corresponding control points, see \cite{HLT:PauleyNguyenMayerSpehWeegerJuettler:2015a}, \cite{HTL:NguyenPauleyJuettler:2014a} and \cite{Jutler_Falini_Spech_2015}. 
A consequence of this is a non-conforming parametrizations of the 
patches in the sense that the images of the patch interfaces under the parametrizations   are not identical. 
This in turn leads to the existence of gap and/or overlapping regions between the adjoining patches, 
see a schematic illustration in  Fig.~\ref{Fig_ContolNet_Overl}(b).  
}
\par
\textcolor{black}{
This paper considers   the case where there are only overlapping regions  between the patches.
If we apply an  IGA methodology to this multipatch representation, a direct consequence
is that the whole discretization error will include two (main) parts: the first naturally comes  from the approximation properties of the B-spline spaces (for the purposes of this work  we use B-spline spaces)
and, the second comes from the  geometric error. The later 
is  due to the incorrect parametrization of the patch interfaces. 
Furthermore, the geometric error can be characterized as a consistency error, which 
consists of two error components. 
The first error component is related to the approximation of the jumps of the flux of the solution on the non-matching interfaces. The second component is related to the existence of more than one numerical solution in the 
overlapping regions.    
\\
The contribution of this paper is to develop a DG-IGA  method which can be applied on volumetric patch representations with
non-matching interface parametrizations. We present our methodology   for 
discretizing the following elliptic 
Dirichlet boundary value problem 
\begin{align}\label{0}
-\mathrm{div}(\rho \nabla u) = f \; \text{in} \; \Omega
\quad \mbox{and}	\quad 
u  := u_D=0   \; \text{on} \;
\partial \Omega,
\end{align}
where the diffusion coefficient $\rho(x)$ can be discontinuous 
across a smooth internal interface. 
We derive bounds for the two main parts of the whole error.  In our analysis,  
we  derive separate bounds for the two components of the geometric error. 
To the best of our knowledge, we believe this is an new area of analysis to be investigated. Our current work is the first step in the analysis, where we are developing our methodology
for the numerical solution of the simple stationary diffusion problem  (\ref{0}).
Our intention for future works is to extend the current methodology to more complicated time dependent problems, 
where the interface can move with time, cf.  \cite{Bazilevs_FSI_2013}. 
}
\par
{
Due to the non-matching interior patch interfaces, a direct application of the classical DG numerical fluxes
proposed in literature, see e.g.   \cite{LT:LangerToulopoulos:2014a}, \cite{Nguyen_Nitche_3Dcoupli_2014},  is not possible, as these fluxes are only applicable for matching interface parametrizations.  
In our recent papers, \cite{HoferLangerToulopoulos_2016_SISC} and \cite{HoferToulopoulos_IGA_Gaps_2015a}, we  developed
DG-IGA  schemes for  multipatch unions that include  only gap regions.
In particular,  we  considered the  PDE model given  in (\ref{0}) and we 
denoted by $d_g$ the maximum distance between the diametrically opposite points located on the gap boundary.  
We applied Taylor expansions using the diametrically opposite points of the gap, in order to give estimates 
 for the jumps of the normal fluxes  with respect to $d_g$.  
Finally, we used the same Taylor expansions in the DG-IGA scheme 
 for constructing suitable DG numerical fluxes across the gap boundary 
 that help  on the weakly coupling of the local patch-wise discrete problems.
 We developed a discretization error analysis and  showed  a priori estimates in the DG-norm, expressed in terms of the mesh size and the gap width, 
 i.e., $\mathcal{O}(h^r)+\mathcal{O}(d_g)$, where $r$ depends on
 the B-spline degree $p$ and  the regularity of the solution.   
  In  \cite{HoferLangerToulopoulos_2016_SISC} and \cite{HoferToulopoulos_IGA_Gaps_2015a}, we have shown that,
 if $d_g=\mathcal{O}(h^{p+\frac{1}{2}})$, the proposed DG-IGA scheme has optimal approximation properties. 
 }
 \par
 \textcolor{black}{
In this paper, we extend the previous work  to multipatch unions with  overlapping regions. 
 In the analysis presented in  \cite{HoferLangerToulopoulos_2016_SISC} and \cite{HoferToulopoulos_IGA_Gaps_2015a}, the whole geometric error does not include the  component coming from  the co-existence of different IGA solutions in the overlapping regions. Here, the new approach is to introduce local (patch-wise) auxiliary  variational problems, which are 
  compatible with the overlapping nature of the multipatch representation of $\Omega$. 
  We denote the solutions of the new variational problems  by $u^*$.
  These problems are not consistent,  in the sense that   the original  solution $u$ of  (\ref{0})
  does not satisfy them.   
 Following the IGA concept, the B-spline spaces used for the  parametrization of the patches 
 are also used for discretizing the local auxiliary  problems. We denote by  $u_h^*$ the produced IGA solutions. 
 Under some regularity assumptions on $u^*$, we can expect (see  Section 3)  that the IGA solution $u_h^*$  has optimal approximation properties    associated with  $u^*$. However,  we can not directly infer that $u_h^*$  can approximate in an optimal way 
  the  solution $u$ of the original     problem. 
  In our analysis,  we  provide an estimate for  the consistency error $u-u^*$ and consequently 
  using  the triangle inequality $\|u-u_h^*\|_{DG} \leq \|u^*-u_h^*\|_{DG}+\|u-u^*\|_{DG}$, we can derive an estimate
  for the error between the exact solution $u$ and the IGA solution $u_h^*$. The  mesh-dependent norm
  $\|\cdot\|_{DG}$ is  defined in Section 2. 
  We  give  error estimates  for both terms $\|u^*-u_h^*\|_{DG}$ and $\|u-u^*\|_{DG}$
  expressed in terms of the mesh size $h$ and the quantity $d_o$, which
  is introduced in our analysis in order to quantify the width of the overlapping regions.  
  In particular, we  show that under appropriate assumptions on the data 
  and for the  case where $d_o$ is of order $\textstyle{h^{\lambda},\,\lambda \geq p+\frac{1}{2}}$, 
  the proposed DG-IGA scheme 
  has optimal convergence properties. This convergence  result is similar to the result in 
  \cite{HoferLangerToulopoulos_2016_SISC} and \cite{HoferToulopoulos_IGA_Gaps_2015a}. }
   \par
  \textcolor{black}{  
   In a future work,  we apply the same approach to solve problems on multipatch partitions,
   which can include gap and overlapping regions. 
   We present numerical solutions in multipatch unions with more complicated gaps and overlapping regions. 
   We also provide details  related to the implementation of the proposed DG-IGA scheme. In the same work, we also discuss issues related to the 
   construction  of domain decomposition methods on these multipatch representations  and provide several numerical tests 
   for evaluating their performance. The first results in this direction can be found in \cite{Report_Hofer_Langer_ToulopoulosDGIGA_GapOver2016}.
   }
  \par
     We note that IGA multipatch representations with non-matching interfaces meshes, overlapping regions 
   and  trimmed patches have been considered in many publications. For the communication of the 
   discrete patch-wise problems, several
   Nitsche's type coupling  methods involving normal flux terms have been   applied across the interfaces, 
   see e.g., 
   \cite{Ruess_NitcCoplPtac_2015},\cite{Nguyen_Nitche_3Dcoupli_2014},\cite{Apostolatos_Schmidt_Wuchner_Bletzinger_IJNUMEng_2014},\cite{Hughe_Bazilevs_WeaklyBC_2007} 
   and  references therein. We mention also that in \cite{ZHANG_DG_IGA_Overl_2017}, DG-IGA methods 
   have been presented to discretize Laplace problems on multipatch unions with large overlapping regions. 
   The proposed strategy follows the additive Schwartz methodology. 
   To the knowledge of the authors, there are no
   works  that analytically discuss estimates for the error, which is  caused by the incorrect representation of the shape
   of the patches. The purpose of this work is to present  such an error analysis.
\par 
The structure of the paper is as follows: Section 2 presents the PDE model, briefly reviews the B-spline spaces and
describes the case of having non-matching parametrized interfaces with overlapping regions.
Section 3, presents in detail the perturbation problems, the bounds for the consistency error,
the proposed DG-IGA scheme and the error analysis. Section 4, includes several numerical examples that confirm the theoretical estimates. The paper closes with the Conclusions. 
\section{The model problem}
\subsection{Preliminaries}
Let $\Omega$ be a bounded Lipschitz domain in $\mathbb{R}^d,\,d=2,3$, 
and let $\boldsymbol{\alpha}=(\alpha_1,\ldots,\alpha_d)$ be a multi-index of non-negative integers 
$\alpha_1,\ldots,\alpha_d$
with degree $\textstyle{ |\boldsymbol{\alpha}| = \sum_{j=1}^d\alpha_j}$. 
For any  $\boldsymbol{\alpha}$,  we define the differential operator
$D^{\boldsymbol{\alpha}}=D_1^{\alpha_1}\ldots D_d^{\alpha_d}$,
with $D_j = \partial / \partial x_j$, $j=1,\ldots,d$, and $D^{(0,\ldots,0)}\phi=\phi$.
For a  non-negative integer $m$, let $C^{m}(\Omega)$ denote the space
of all functions $\phi:\Omega \rightarrow \mathbb{R}$, whose 
partial derivatives
$D^{\boldsymbol{\alpha}} \phi$ of all orders $|\boldsymbol{\alpha}| \leq m$ are continuous in $\Omega$. 
Let  $\ell$ be a non-negative integer. 
 As usual, 
 $L^2(\Omega)$ denotes the Sobolev space for which 
 $\textstyle{\int_{\Omega}|\phi(x)|^2\,dx < \infty}$, endowed with the norm
 $\textstyle{\|\phi\|_{L^2(\Omega)} = \big(\int_{\Omega}|\phi(x)|^2\,dx\big)^{\frac{1}{2}}}$, 
 and  $L^{\infty}(\Omega)$ denotes the functions that are essentially bounded. Also 
 \begin{equation*}
 	H^{\ell}(\Omega)=\{\phi\in L^2(\Omega): D^{\boldsymbol{\alpha}} \phi \in L^{2}(\Omega),\,\text{for all}\,|\alpha| \leq \ell \},
 \end{equation*}
denote the standard Sobolev spaces endowed with the following  norms 
 \begin{equation*}
 \|\phi\|_{H^{\ell}(\Omega)} = \big(\sum_{0\leq |\boldsymbol{\alpha}| \leq \ell} \|D^{\boldsymbol{\alpha}}\phi\|_{L^2(\Omega)}^2\big)^{\frac{1}{2}}.
 \end{equation*}
 We identify $L^2$ and $H^{0}$ and also define the subspace $H^{1}_0(\Omega)$ and $H^{1}_{\Gamma}(\Omega)$ of $H^{1}(\Omega)$ 
 \begin{align*}
 \hskip -0.2cm {
 	H^{1}_0(\Omega) =\{\phi\in H^{1}(\Omega):\phi=0\,\text{on}\, \partial \Omega \}},\quad
 	H^{1}_{\Gamma}(\Omega)=\{\phi\in H^{1}(\Omega):\phi=0\,\text{on}\,
 	\textcolor{black}{\Gamma\subset \partial \Omega,\, |\Gamma| >0.} \}.
 \end{align*}

We recall H\"older's and Young's inequalities 
\begin{equation}\label{HolderYoung}
  \left|  \int_{\Omega}\phi_1 \phi_2\,dx \right| \leq  \|\phi_1\|_{L^2(\Omega)}\|\phi_2\|_{L^2(\Omega)}
\quad \mbox{and} \quad
\left|  \int_{\Omega}\phi_1 \phi_2\,dx \right|\leq \frac{\epsilon}{2}\|\phi_1\|^2_{L^2(\Omega)}+  
 \frac{1}{2\epsilon}\|\phi_2\|^2_{L^2(\Omega)},
\end{equation}
that hold for all $\phi_1\in L^2(\Omega)$ and $\phi_2\in L^2(\Omega)$ 
and for any fixed $\epsilon \in (0,\infty)$. In addition, we  recall  trace and Poincare's inequalities, \cite{Evans_PDEbook},
\begin{equation}\label{Poincare_trace}
\begin{split}
 \|\phi\|^2_{L^2(\partial \Omega)} \leq& C_{tr} \|\phi\|_{L^2(\Omega)} \|\phi\|_{H^1( \Omega)},\\
 \|\phi\|_{L^2(\Omega)}\leq& \text{meas}_{\mathbb{R}^d}(\Omega) \,\|\nabla \phi\|_{L^2(\Omega)},\quad
 \text{for}\quad \phi \in H^{1}_{\Gamma}(\Omega).
 \end{split}
\end{equation}

\subsection{The elliptic diffusion problem}
The weak formulation of the boundary value problem (\ref{0}) reads as follows:
for given source function $f \in L^2(\Omega)$ 
find a function $u\in H_0^1(\Omega)$ such that 
 the variational identity
\begin{equation}
\label{4a}
a(u,\phi)=l_f(\phi),\;  \forall \phi \in 
H^{1}_{0}(\Omega), 
\end{equation}
is satisfied,
where the bilinear form $a(\cdot,\cdot)$ and the linear form $l_f(\cdot)$
are defined by 
\begin{equation}
\label{4b}
a(u,\phi)=\int_{\Omega}\rho\nabla u \cdot \nabla\phi\,dx
\quad \mbox{and}\quad
l_f(\phi)=\int_{\Omega}f\phi\,dx,
\end{equation}
respectively. 
{\color{black}
The given diffusion coefficient $\rho \in L^{\infty}(\Omega)$ is assumed to be
uniformly positive
and piece-wise (patch-wise, see below) constant. 
These assumptions ensure existence and uniqueness of the solution
due to Lax-Milgram's lemma.
}
For simplicity, we only consider pure
Dirichlet boundary conditions on $\partial \Omega$.
However, the analysis presented in our paper can easily be generalized to 
other constellations of boundary conditions which ensure existence and uniqueness
such as Robin or mixed boundary conditions.
\\
In what follows, positive constants $c$ and $C$ appearing in  inequalities are 
generic constants which do not depend on the mesh-size $h$. In many cases,  
we will indicate on what may the constants depend on.  
Frequently, we will write $a\sim b$ meaning that $c \, a\leq b \leq C \, a$.

\subsection{B-spline spaces}
\label{Bsplinespace}
In this section, we briefly present the B-spline spaces and the form of the B-spline parametrizations
for  the physical  subdomains. 
For a better presentation of the B-spline spaces, we start our discussion for the one-dimensional case. Then
we proceed to higher dimensions. 
We refer to \cite{LT:Hughes_IGAbook_2009}, 
\cite{CarlDeBoor_Splines_2001} and \cite{LT:Shumaker_Bspline_book} 
for a more detailed presentation. 
\par
Consider,   \textcolor{black}{$\mathcal{Z}=\{0=z_1 < z_2 < \ldots <z_{M}=1\}$} to be a partition 
 of $\bar{I}=[0,1]$ with $\bar{I}_j=[z_j,z_{j+1}],\, j=1,\ldots,M-1$ to be the intervals
 of the partition.  
Let the integers $p$ and $n_1$ denote the $p$ spline degree and the number of the B-spline basis functions.
 \textcolor{black}{
Based on $\mathcal{Z}$, we introduce the open knot vector  $\Xi=\{0=\xi_1,  \xi_2, \ldots,  \xi_{n_1+p+1}=1\}$, 
and the associated vector $\mathcal{M}=\{m_{1},\ldots,m_{M}\}$  of knot multiplicities with $m_1=m_M=p+1$, i.e., }
 \begin{align}\label{rep_xi_i}
 \Xi=\{ \underbrace{0=\xi_1,\ldots  ,\xi_{m_1}}_{=z_1}, 
  \underbrace{\xi_{m_1+1}=\ldots=\xi_{m_1+m_2}}_{=z_2},\ldots,
 \underbrace{\xi_{n_1+p+1-m_M},\ldots,\xi_{n_1+p+1}=1}_{=z_M}\}. 
 \end{align}
The B-spline basis functions are defined by the Cox-de Boor formula, see, e.g.,  \cite{LT:Hughes_IGAbook_2009} and \cite{CarlDeBoor_Splines_2001},  
\begin{align}\label{CoxBoor}
B_{i,p}=&\frac{x-\xi_i}{\xi_{i+p}-\xi_i}B_{i,p-1}(x)+
\frac{\xi_{i+p+1}-x}{\xi_{i+p+1}-\xi_{i+1}}B_{i+1,p-1}(x),\\
\nonumber
\text{with}{\ } B_{i,0}(x)=& \begin{cases}
                 	1,\, \text{if}\, \xi_i \leq x\leq \xi_{i+1},\\
                 	0,\, \text{otherwise}
                 \end{cases}
\end{align}
We assume that $m_j\leq p$ for all internal knots, which in turn gives that, at 
$z_j$ the B-spline basis functions have $\mathcal{\kappa}_j=p-m_j$ continuous derivatives. 
\par
Let us now consider the unit cube $\widehat{\Omega}=(0,1)^d\subset \mathbb{R}^d$, which we will refer to as the parametric domain.
Let the integers $p$  and $n_k$  denote  the
 given  B-spline degree  and  the number of basis functions
 of the B-spline space that will be constructed in $x_k$-direction with $k=1,\ldots,d$.  
We introduce the $d-$dimensional  vector of knots 
$\mathbf{\Xi}=(\Xi^1,\ldots,\Xi^{k},\ldots,\Xi^d),$ 
 with the particular components given by 
 $\Xi^{k}=\{0=\xi^{k}_1,  \xi^{k}_2, \ldots, \xi^{k}_{n_k+p+1}=1\}$,\, $k = 1,\ldots,d$, . 
\\
 Given the knot vector $\Xi^{k}$ in every direction $k=1,\ldots,d$, 
 we construct the associated univariate B-spline basis functions, 
 $\hat{\mathbb{B}}_{\Xi^{k},p}=\{\hat{B}_{1,k}(\hat{x}_k),\ldots,\hat{B}_{n_{k},k}(\hat{x}_k)\}$, 
 see, e.g., \cite{CarlDeBoor_Splines_2001} for more details.
Accordingly,  the  B-spline basis functions of $\hat{\mathbb{B}}_{\mathbf{\Xi},k}$ are defined by the tensor-product of the univariate B-spline basis functions, that is 
\begin{align}\label{0.00b1}
\hat{\mathbb{B}}_{\mathbf{\Xi},p}=\otimes_{k=1}^{d}\hat{\mathbb{B}}_{\Xi^{k},p}
=\text{span}\{\hat{B}_{j}(\hat{x})\}_{{j}=1}^{n=n_1\cdot \ldots\cdot n_k\cdot \ldots\cdot n_d},
\end{align}
where each  $\hat{B}_{j}(\hat{x})$ has the form
\begin{align}\label{0.00b2}
\hat{B}_{j}(\hat{x})=&\hat{B}_{j_1}(\hat{x}_1)\cdot\ldots
           \cdot \hat{B}_{j_k}(\hat{x}_k)\cdot\ldots
           \cdot\hat{B}_{j_d}(\hat{x}_d), \,
           \text{with}\,\hat{B}_{j_k}(\hat{x}_k) \in \hat{\mathbb{B}}_{\Xi^{k},k}.
\end{align}
 In the IGA framework, the computational domain  $\Omega$ is described as the image of $\widehat{\Omega}$ 
 under a B-spline, NURBS, etc.,  parametrization mapping of the form
\begin{align}\label{0.0c}
 \mathbf{\Phi}: \widehat{\Omega} \rightarrow \Omega, \quad
 x=\mathbf{\Phi}(\hat{x}) = \sum_{{j=1}}^n \mathbf{C}_{j} \hat{B}_{j}(\hat{x})\in \Omega,
 \end{align}
\label{0.0c2}
 where $\mathbf{C}_{j},\, j=1,\ldots,n$ are the control points and   $\hat{x} =\mathbf{\Phi}^{-1}(x)$, 
 see Fig. \ref{Fig_ContolNet_Overl}(a). 
 Following the IGA methodology, \cite{LT:HughesCottrellBazilevs:2005a}, \cite{LT:Hughes_IGAbook_2009},
 the B-spline spaces for discretizing the PDE problem are defined by using the mapping given in (\ref{0.0c}), i. e., 
we define the  B-spline space in $\Omega$ by 
\begin{align}\label{0.0d2}
	\mathbb{B}_{\mathbf{\Xi},p}:=\textcolor{black}{\text{span} }\{B_{{j}}|_{\Omega}: B_{j}({x})=
  \hat{B}_{j}\circ\mathbf{\Phi}^{-1}({x}),{\ }\text{for}{\ }
  \hat{B}_{j}\in \hat{\mathbb{B}}_{\mathbf{\Xi},p} \}. 
\end{align}
\subsubsection{ Multipatch representations and B-spline spaces} 
 \textcolor{black}{
Our contribution here aims at developing a DG-IGA method appropriate for discretizing PDE models on 
non-conforming multipatch partitions  of the domain $\Omega$. 
Let us suppose that the domain $\Omega$ is described as a union of N-subdomains  
\begin{align}\label{DecmbOmega}
\overline{\Omega}=\cup_{i=1}^N \overline{\Omega}_i, \quad \text{with}{\ } {\Omega}_i\cap{\Omega}_j=\emptyset,\,
\text{for}\, i\neq j,
\end{align}
with interior  interfaces 
 $F_{ij}=\partial \Omega_i\cap \partial \Omega_j$, for $1 \leq i\neq j \leq N$.  
We further suppose that every subdomain $\Omega_i$ has its own parametrization $\mathbf{\Phi}_i$,
which is defined by the corresponding B-spline space $\hat{\mathbb{B}}_{\mathbf{\Xi}_i,p}$ and the corresponding 
control points $\mathbf{C}_{j}^{(i)}$,  see (\ref{0.0c}). Here  $\mathbf{\Xi}_i$ denotes the knot-vector related to $\Omega_i$.   An illustration for $N=2$ is given 
in  Fig. \ref{Fig_ContolNet_Overl}(a). The subdomains $\Omega_i$ are referred to as patches. In an analogous way as in (\ref{0.0d2}), we define the physical patch-wise B-spline spaces $\mathbb{B}_{\mathbf{\Xi}_i,p}$ for $i=1,\ldots,N$. 
 \textcolor{black}{We  define the global discontinuous B-spline space $V_{\mathbb{B}}$ with components on every 
 $\mathbb{B}_{\mathbf{\Xi}_i,p}$
\begin{align}\label{0.0d1}
V_{\mathbb{B}}:=\{\phi_h\in L^2(\Omega): \phi_h|_{\Omega_i}\in \mathbb{B}_{\mathbf{\Xi}_i,p}\}.
\end{align}
}
\begin{assume}\label{smooth_Phi_i}
	Assume that every $\mathbf{\Phi}_i,\, i=1,...,N$ is sufficiently smooth  and 
	there exist constants $0 < c < C$  such that $c \leq |\det J_{\mathbf{\Phi}_i} | \leq C$, where $J_{\mathbf{\Phi}_i}$ is the
        Jacobian matrix of $\mathbf{\Phi}_i$. 
\end{assume}
 The components   of $\mathbf{\Xi}_i$  form 
a mesh  $T^{(i)}_{h_i,\widehat{\Omega}}=\{\hat{E}_m\}_{m=1}^{M_i}$ in $\widehat{\Omega}$,
where $\hat{E}_m$ are the micro-elements and $h_i$ is the mesh size, which is
defined as follows. Given an element  $\hat{E}_m\in T^{(i)}_{h_i,\widehat{\Omega}} $, 
we set $h_{\hat{E}_m}=\text{diameter}(\hat{E}_m)$
and the mesh size $h_i$ is defined to be  $h_i= \max\{h_{\hat{E}_m}\}$.
We set
$\textstyle{h=\max_{i=1,\ldots,N}\{h_i\}}$.
For every $\Omega_i$, we construct a mesh $T^{(i)}_{h_i,\Omega_i} =\{E_{m}\}_{m=1}^{M_i}$, 
whose vertices are the images of the vertices
 of the corresponding parametric mesh $T^{(i)}_{h_i,\widehat{\Omega}}$
 under $\mathbf{\Phi}_i$. 
 \begin{assume}\label{Assumption2}
	The  meshes $T^{(i)}_{h_i,\widehat{\Omega}}$ are quasi-uniform, i.e.,
	there exist a constant $\theta \geq 1$ such that 
	$\theta^{-1} \leq {h_{\hat{E}_m}}/{h_{\hat{E}_{m+1}}} \leq \theta$.
	Also, we assume that $h_i \sim h_j$ for $1 \leq i\neq j \leq N$.
\end{assume}
 }

\subsection{Multipatch description of the computational domain}
\begin{figure}
 \begin{subfigmatrix}{2}
 \subfigure[]{\includegraphics[width=5.650cm, height=6.475cm]{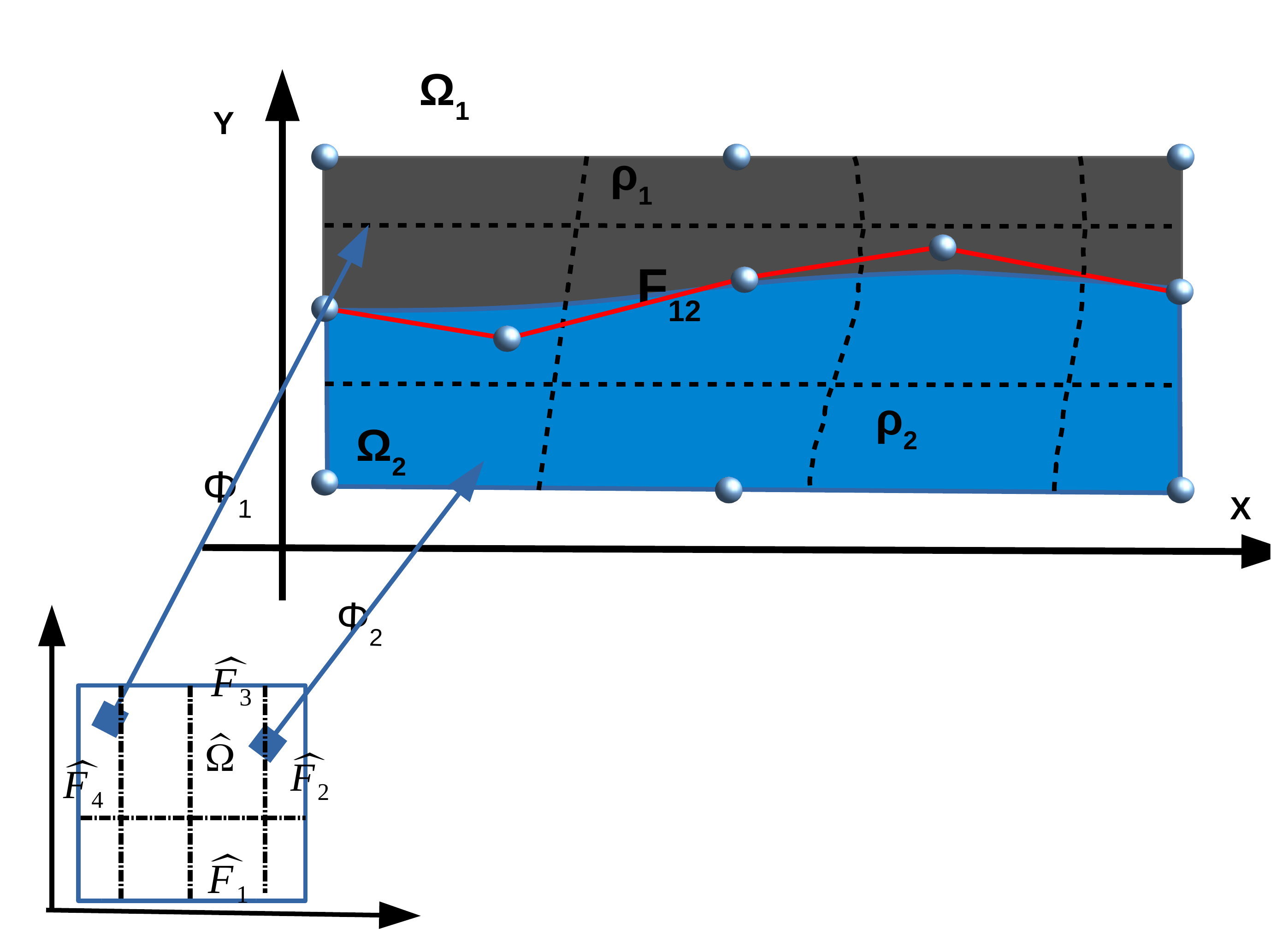}}
 \subfigure[]{\includegraphics[width=.65\textwidth,trim={0 9cm 0 0}]{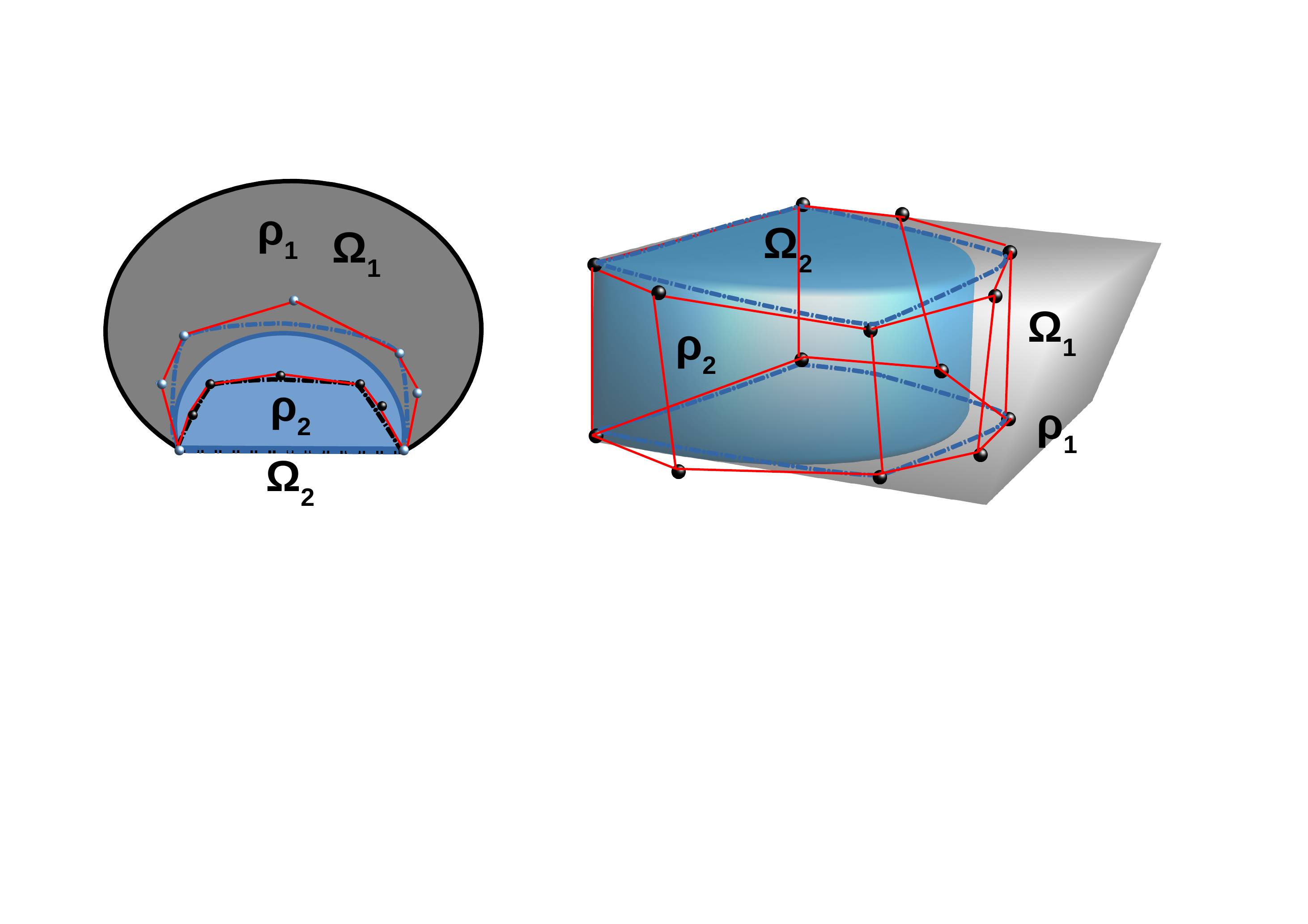}}
\end{subfigmatrix}
 \caption{ (a) A conforming multipatch partition of $\Omega$, (b) 
         the inaccurate control points and the non-conforming multipatch partition of $\Omega$.  }
 \label{Fig_ContolNet_Overl}
\end{figure}
\textcolor{black}{
In many practical applications, the parametrization of a boundary represented domain $\Omega$ by a single B-spline  (NURBS) patch may not be posible.
In order to discretize a PDE problem following the IGA framework in this situation, 
 we  represent the domain $\Omega$  as a  multipatch.  
 Following the methodology presented in 
 \cite{HTL:NguyenPauleyJuettler:2014a,HLT:JuettlerKaplNguyenPanPauley:2014a}, the initial domain $\Omega$ is firstly segmented 
 into a collection of simple subdomains, e.g.,  topological hexahedra. 
Consequently, a suitable parametrization mapping is constructed  
for each subdomain for obtaining  the multipatch representation  of $\Omega$. 
The final parametrization mappings  of the adjoining  patches must provide identical images for the common interfaces. 
 In particular, for a DG-IGA discretization of the  model (\ref{0}), it would be preferable to produce a 
multipatch partition  of  $\Omega$  compatible with the variations of the coefficient $\rho$, i.e., 
the patches to be coincided with the parts of $\Omega$ where the coefficient $\rho$ is constant. 
}
\textcolor{black}{
For example, let us consider  Fig. \ref{Fig_ContolNet_Overl}(a). In this case 
the domain  $\Omega$ is  described as a union of 
two  non overlapping patches, see (\ref{DecmbOmega}),   i.e., 
\begin{alignat}{2}\label{2.7_1}
 \overline{\Omega} = \overline{\Omega_1} \cup \overline{\Omega_2}, &\quad \overline{\Omega_1} \cap \overline{\Omega_2} = \emptyset, &
 {\ }\text{with} {\ } F_{12}=\partial \Omega_1 \cap \partial \Omega_2,
\end{alignat}
where the interface $F_{12}$ coincides with  the  physical interface. 
We use the notation  
$\cal{T}_{H}(\Omega):=\{\Omega_1, \Omega_2\}$ for the  union (\ref{2.7_1}).  
For each $\Omega_i,\,i=1,2$, there exists a  matching parametrization mapping such that $\mathbf{\Phi}_i:\widehat{\Omega}\rightarrow \Omega_i$ 
with $\Omega_i=\mathbf{\Phi}_i(\widehat{\Omega})$.
The control points, which are related to the patch interface $F_{12}$, are  appropriately 
matched in order for the  parametrizations $\mathbf{\Phi}_1$ and $\mathbf{\Phi}_2$ of the neighboring patches to give the same image for the 
parametrized interface $F_{12}$.   
Based on $\cal{T}_{H}(\Omega)$, we can  independently discretize the 
problem on the different patches $\Omega_i,\, i=1,2$, using  interface conditions across $F_{12}$
for coupling the local problems. 
Typically, the interface conditions across  $F_{12}$ concern continuity requirements of the solution $u$ of (\ref{0}), i.e.,
\begin{align}\label{Interf_Cond}
	\llbracket u \rrbracket :=u_1-u_2=0\text{ on }F_{12},\quad\text{and}\quad 
        \llbracket \rho \nabla u\rrbracket\cdot n_{F_{12}} :=(\rho_1\nabla u_1-\rho_2\nabla u_2)\cdot n_{F_{12}} = 0\text{ on }F_{i12},
\end{align}
 where $n_{F_{12}}$ is the unit normal  vector on $F_{12}$ with direction towards $\Omega_2$,
and $\rho_i,\,u_i,\,i=1,2$ denote the restrictions of $\rho$ and $u$ to $\Omega_i$ correspondingly. The conditions (\ref{Interf_Cond}) can be ensured by  considering appropriate regularity assumptions on the solution $u$. 
We note that these type of multipatch representations have been considered in \cite{LT:LangerToulopoulos:2014a} and DG-IGA methods have been proposed for discretizing the problem (\ref{0}). 
\\
Anyway,  for simplicity we develop  our analysis  based on Fig. \ref{Fig_ContolNet_Overl}. 
We introduce the appropriate spaces.  
Let  $\ell\geq 2$ be  an integer, we define the broken Sobolev space
\begin{align}\label{01c_0}
	H^{\ell}(\cal{T}_{H}(\Omega))=\{u\in L^{2}(\Omega): u_i=u|_{\Omega_i}\in H^{\ell}(\Omega_i),\, \text{for}\,i=1,2\}.
\end{align}
\begin{assume}\label{Assumption1}
 We assume that the  solution $u$ of (\ref{4a}) belongs to  
 \textcolor{black}{
 $V= H^{1}_0(\Omega)\cap H^{2}(\Omega) \cap H^{\ell}(\cal{T}_H(\Omega))$ with $\ell \geq  2$. 
}
\end{assume}
}
\begin{figure}
 \begin{subfigmatrix}{2}
   \subfigure[]{\includegraphics[width=.35\textwidth,trim={0 5cm 0 0}]{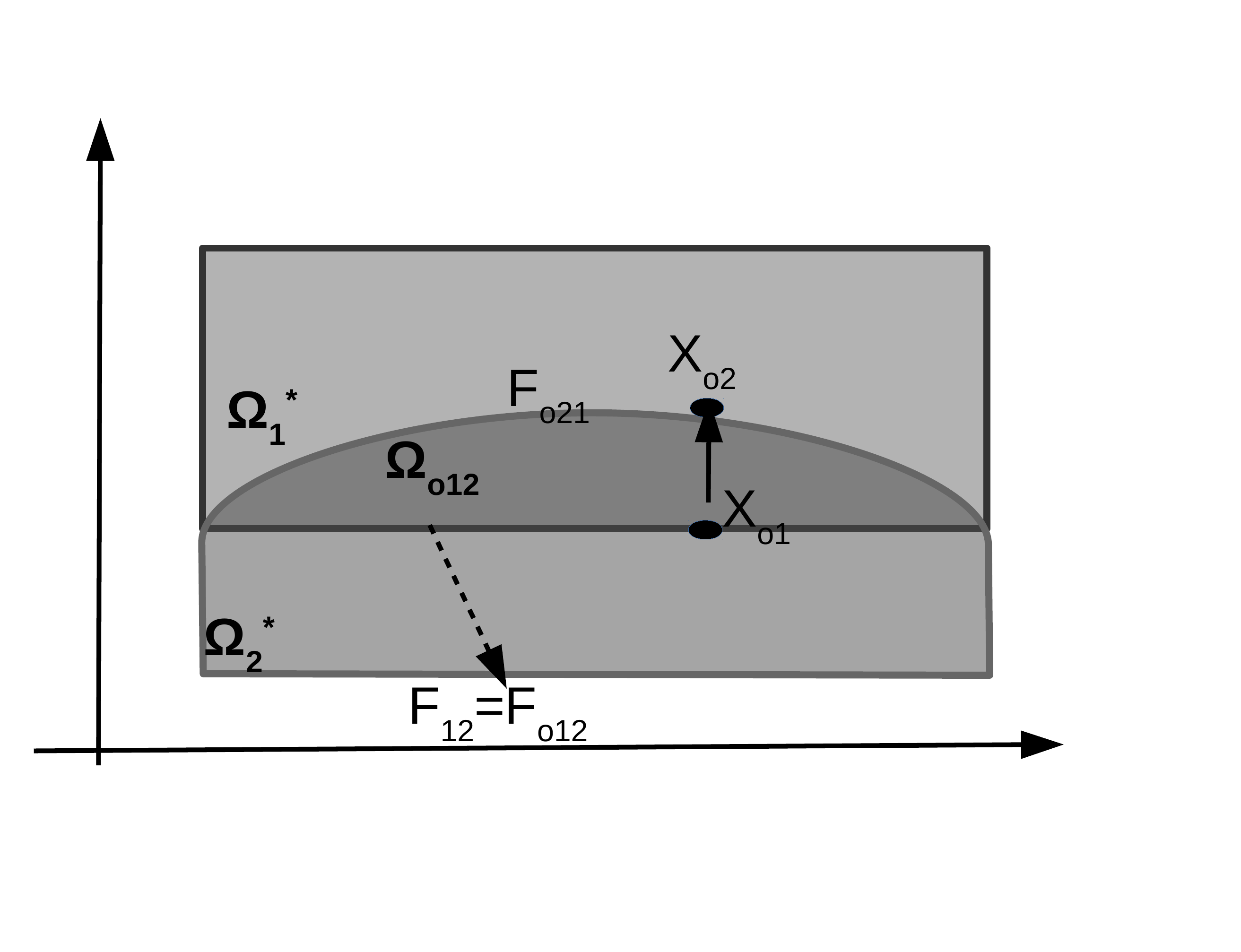}}
  \subfigure[]{\includegraphics[width=.5\textwidth,trim={0 8cm 0 0}]{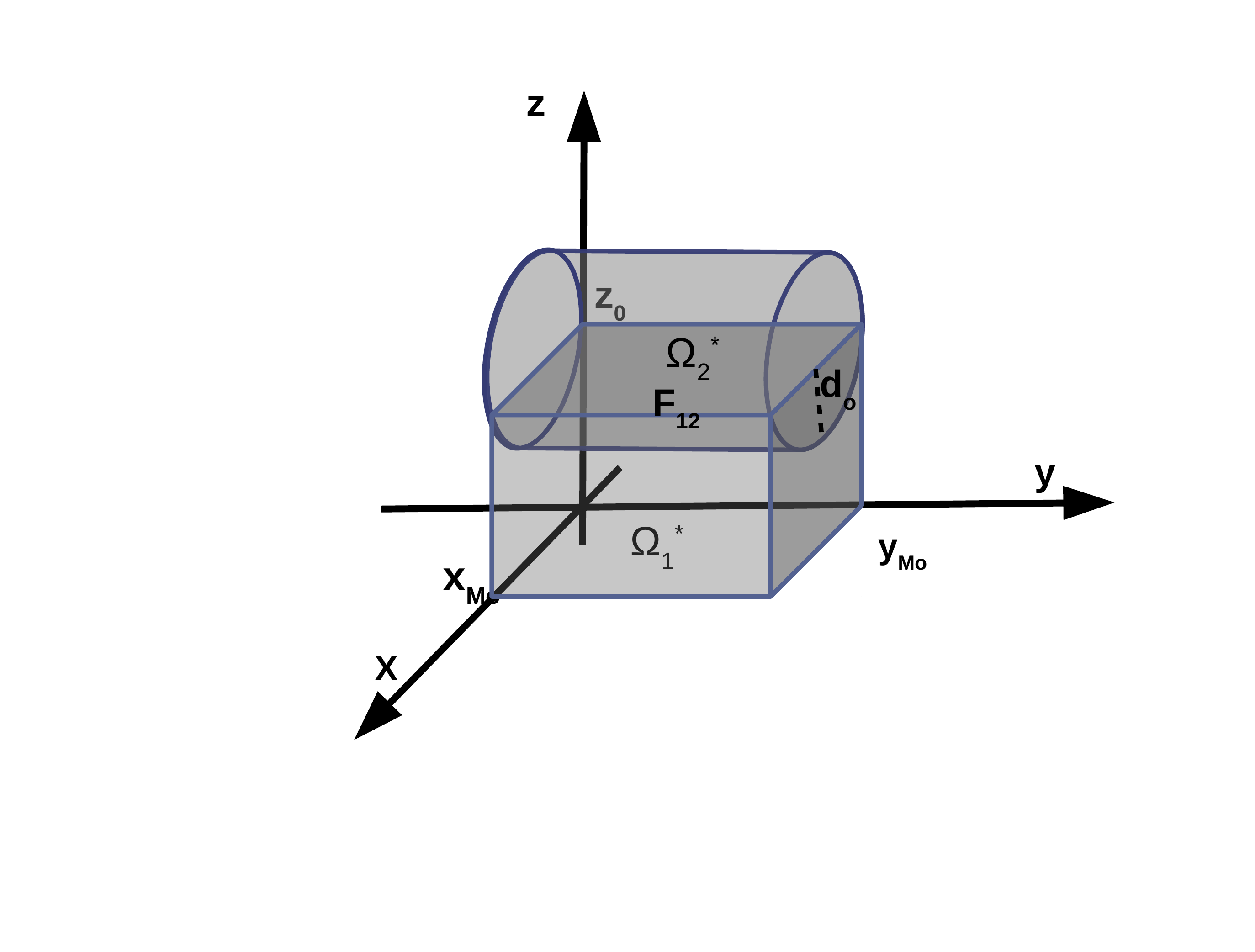}}
  \subfigure[]{\includegraphics[width=.4\textwidth,trim={0 5cm 0 0}]{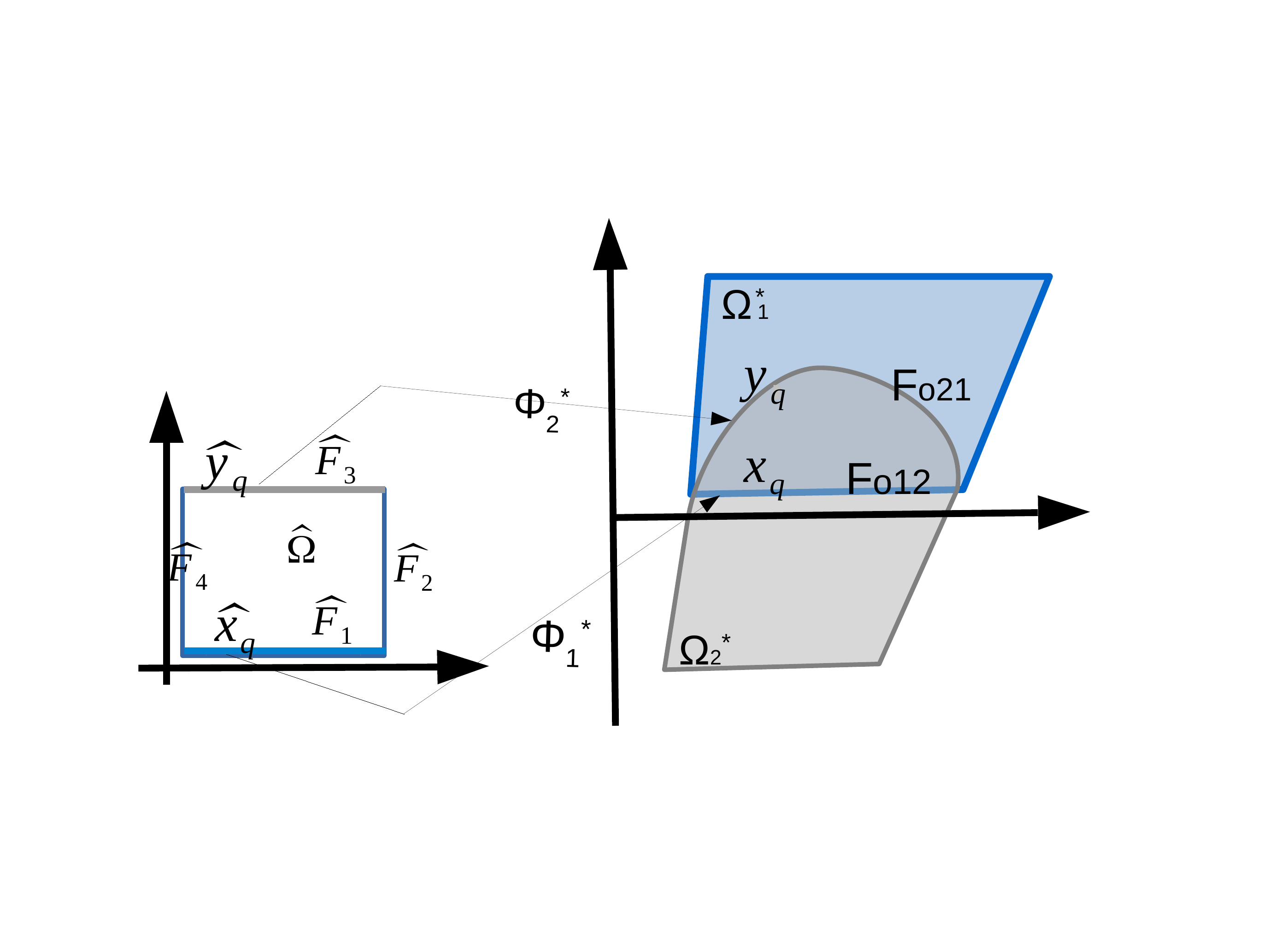}}
  \subfigure[]{\includegraphics[width=.35\textwidth,trim={0 7cm 0 0}]{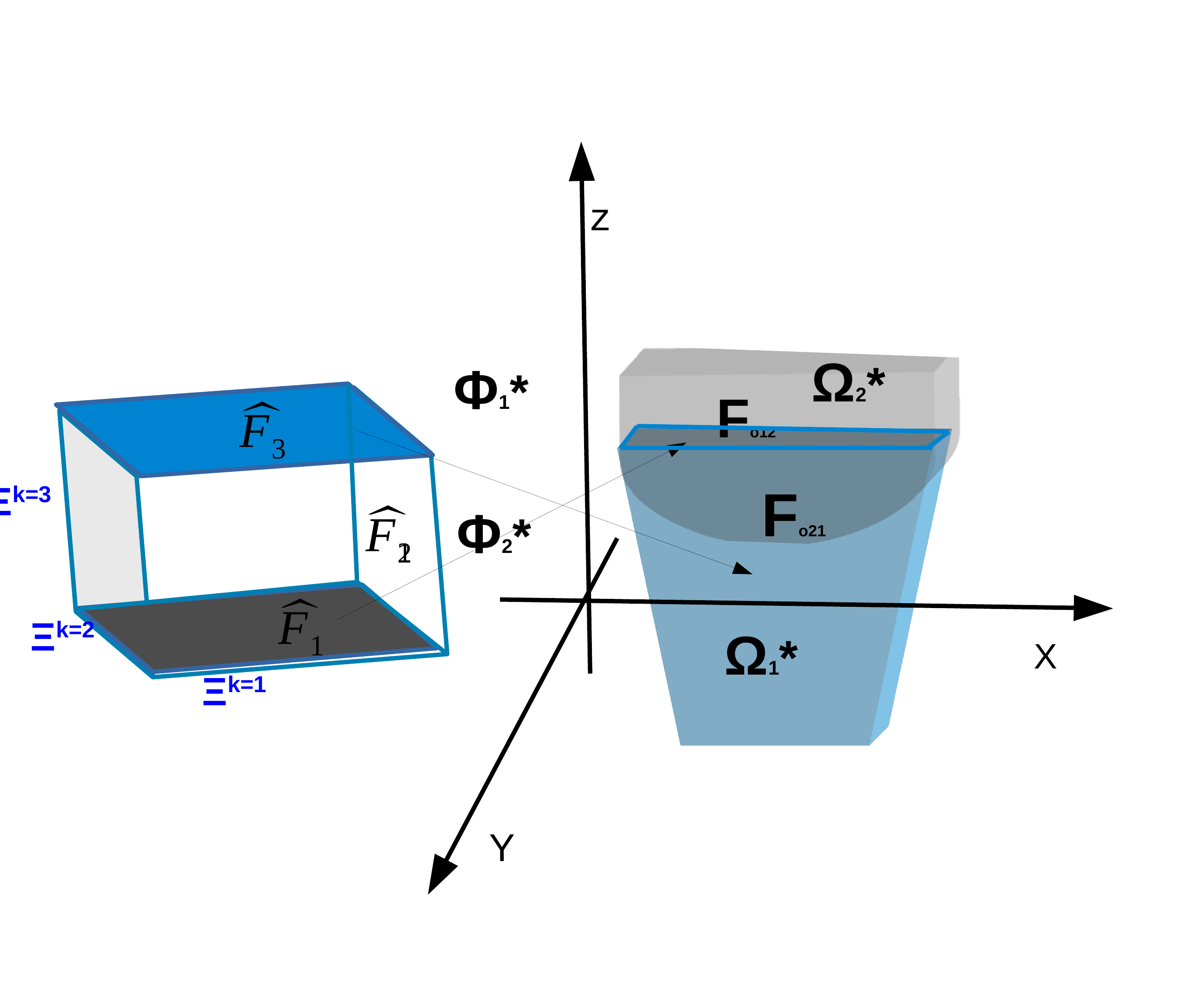}}
\end{subfigmatrix}
 \caption{ (a)  Illustration of a patch partition with the overlapping region $\Omega_{o21}$  in 2d 
                 and the diametrically opposite points on $\partial \Omega_{o21}$, 
           (b)  overlapping patches in 3d, 
           (c) the images of the faces of $\partial \widehat{\Omega}$ under the mappings $\mathbf{\Phi}_i^*,\,i=1,2$ in 2d,
                      (d) the images of the faces of $\partial \widehat{\Omega}$ in 3d.
         }
 \label{Fig_Sub_doms_Ovelap}
\end{figure}
\subsection{Problem statement}
\subsubsection{Non-matching parametrized interfaces}
\textcolor{black}{
Typically, the segmentation procedure will generate multipatch  representations that have
possibly non-matching interface parametrizations, \cite{HLT:PauleyNguyenMayerSpehWeegerJuettler:2015a}.
The result  is the existence of 
gap and overlapping regions in the multipatch representation of the domain $\Omega$. 
In  \cite{HoferLangerToulopoulos_2016_SISC} and \cite{HoferToulopoulos_IGA_Gaps_2015a}, we  developed
DG-IGA  schemes for  multipatch unions that only include gap regions.
In this work, we focus on multipatch representations with small overlapping  regions, 
see  Fig. \ref{Fig_ContolNet_Overl}(b) and Figs. \ref{Fig_Sub_doms_Ovelap}(a), (b).
Due to the  non-matching parametrization of the interior patch interfaces,
 a direct application of interface conditions, as those given in  
 (\ref{Interf_Cond}) for deriving DG-IGA methods,  is not possible. 
 The purpose of this paper is to investigate the construction of auxiliary 
 interface conditions on the boundary of the  overlapping 
regions; which can be used for constructing DG-IGA schemes.  
We present  a discretization error analysis separating the whole 
discretization error into two parts: the first naturally comes  from the approximation 
properties of the B-spline spaces
and the second, is the geometric error coming from the incorrect parametrization of the patches. The geometric error is considered as 
a consistency
error and it is further separated into two  components. The first error component is related to
the approximation of the flux terms across  the non-matching interfaces and
the second component is related to the existence of more than one numerical solution in the
overlapping regions.
\begin{remark}\label{JoinContrlp}
Alternatively, one can perform additional post-processing steps after the segmentation procedure to obtain
matching interfaces. However, this procedure may increase the number of patches and the number of control points.  
Moreover,  the newly obtained patch interfaces may not coincide  with the original interface of the PDE problem,  
and thus  the  geometrical consistency  error will still exist.   
\end{remark}
}
\subsubsection{The overlapping  regions}
\label{overlapping_region} 
\textcolor{black}{
As we mentioned above,  for the sake of  simplicity, we restrict our investigation  to the case where the multipatch representation of
$\Omega$ has  two overlapping patches, see 
Fig. \ref{Fig_Sub_doms_Ovelap}.  
Let  suppose that 
\begin{alignat}{2}\label{2.7_patches}
 \overline{\Omega} = \overline{\Omega_1^*} \cup \overline{\Omega_2^*}, 
\end{alignat}
where each patch has its own parametrization  $\mathbf{\Phi}_1^*:\widehat{\Omega}\rightarrow \Omega_1^*$ and 
 $\mathbf{\Phi}_2^*:\widehat{\Omega}\rightarrow \Omega_2^*$, as it is shown in  Figs. \ref{Fig_Sub_doms_Ovelap}(c),(d).
 We denote the overlapping region by $\Omega_{o21}$, i.e.,
 $\Omega_{o21}=\Omega_1^* \cap \Omega_2^*\subset \Omega$.  
 We  denote the interior boundary faces of the  overlapping region by  $F_{o12}=\partial \Omega^*_{1}\cap \Omega^*_{2}$ 
 and $F_{o21}=\partial \Omega^*_{2}\cap \Omega^*_{1}$,  
  which implies that $\partial \Omega_{o21}=F_{o12} \cup F_{o21}$. 
  Finally,  let
${n}_{F_{oij}}$  denote the unit exterior normal vector to $F_{oij}$, for $1\leq i\neq j\leq 2$.
For  functions  $u_i^*$ defined in $\Omega^*_{i},\,i=1,2$ we identify  their pair $(u_1^*,u_2^*)$ by $u^*$, which
is equal to $u_i^*$ on $\Omega_i^*$. 
Next, we introduce an assumption related to the form of the faces $F_{o21}$ and $F_{o12}$. This assumption 
will help us to simplify the analysis, to explain in a better way  our ideas, 
and to keep the notation to a minimum, e.g., the form of Jacobians, the form of face integrals etc. 
In Section \ref{Section_numerics},  we give details of implementing the proposed method to more complicated overlapping regions.  
\begin{assume}\label{Assu_Omega_*Shape}
	(a) $\overline{\Omega_1}:= \overline{\Omega^*_{1}}$. The face $F_{o12}$    is a an elementary face in the plane, 
	   and  it coincides with the physical interface,                i.e., $F_{o12}=F_{12}$, see (\ref{2.7_1}).\\
        (b) The face $F_{o21}$  can be described  as the set of points $(x,y,z)$ 
    satisfying 
         \begin{equation}\label{5a0_o}
0\leq x  \leq x_{M_o},{\ } 0  \leq y \leq y_{M_o},{\ } z=\zeta_0(x,y),
\end{equation}
           where $x_{Mo}$ and $y_{M_o}$  are  real numbers, and   $\zeta_0(x,y)$ 
             is a  given  smooth  function, see Fig.~\ref{Fig_Sub_doms_Ovelap}.     
\end{assume}
We note  that   we will discretize the PDE problem using the B-spline spaces defined in  
 $\Omega_1^*$ and $\Omega_2^*$. We will couple the resulting discrete problems  in $\Omega_1^*$ and 
in  $\Omega_2^*$ following discontinuous Galerkin techniques, this means by introducing  appropriate
numerical fluxes on $F_{o12}$ and on $F_{o21}$. In order to construct these fluxes, 
we need to assign the points located on $F_{o12}$ to the diametrically opposite points located on $F_{o21}$. 
 Based on Assumption \ref{Assu_Omega_*Shape}, 
we can construct   a parametrization for the face $F_{o21}$, i.e., a  mapping 
$\mathbf{\Phi}_{o12}:F_{o12}\rightarrow F_{o21}$, of the form
\begin{align}\label{parmatric_lro}
 x_{o1}\in F_{o12} \rightarrow \mathbf{\Phi}_{o12}(x_{o1}):=x_{o2}\in F_{o21},\quad \text{with }\quad
          \mathbf{\Phi}_{o12}(x_{o1})=x_{o1}+ \zeta_o(x_{o1}){n}_{F_{o12}},
\end{align}
where   ${n}_{F_{o12}}$ is the unit normal vector on ${F_{o12}}$ and $\zeta_o$ has the same form as in (\ref{5a0_o}),
and it  is a B-spline function with the same degree as the mapping $\mathbf{\Phi}_{2}^*$. More precisely, the face $F_{o21}$ 
is the image of a face of $\partial \hat{\Omega}$ under  the mapping $\mathbf{\Phi}_{2}^*$. 
For the schematic illustration in Figs. \ref{Fig_Sub_doms_Ovelap}(c),(d), we have $F_{o21}=\mathbf{\Phi}_{2}^*(\hat{F}_3)$. 
  Utilizing the mapping $\mathbf{\Phi}_{o12}$ given in (\ref{parmatric_lro}), we   consider 
each point
$x_{o2}\in F_{o21}$ as an image  of a point $x_{o1}\in F_{o12}$ under the  $\mathbf{\Phi}_{o12}$, 
see Figs.~\ref{Fig_Sub_doms_Ovelap}(a),(c).
Finally, we  introduce a parameter $d_o$, which  quantifies the width of the overlapping region $\Omega_{o21}$, i.e., 
\begin{align}\label{5b_0_o}
d_o=&\max_{x_{o1}\in F_{o12}} |{x_{o1}}- \mathbf{\Phi}_{o12}(x_{o1})|.
\end{align}
In the present work, we are interested in overlapping regions with small size, and in particular
for regions where their width $d_o$  decreases polynomially  in $h$, i.e.,
\begin{align}\label{5b_1_o}
d_o \leq & C\, h^\lambda,\quad \text{with $C>0$ and  some}\quad  \lambda \geq  1.
\end{align}
Based on this, we  assume that  ${n}_{F_{o12}} \approx -{n}_{F_{o21}}$, and 
define the mapping $\mathbf{\Phi}_{o21}:F_{o21}\rightarrow F_{o12}$  as
 \begin{align}\label{parmatric_rlo}
        \mathbf{\Phi}_{o21}(x_{o2})= x_{o1},{\ }\text{with} \quad
        \mathbf{\Phi}_{o12}(x_{o1})=x_{o2},
 \end{align}
 where   $\mathbf{\Phi}_{o21}$ is  the inverse of $\mathbf{\Phi}_{o12}$. 
\begin{remark}\label{JoinContrlp}
Our methodology can also be applied to the case where the interior faces of $\partial\Omega_{o21}$ do not touch the boundary $\partial \Omega$.
\end{remark}
\begin{remark}\label{InverMApnonMatchInterf}
As we previously said, the face 
$F_{o12}$ is the image of a face of $\partial\widehat{\Omega}$ under  the mapping $\mathbf{\Phi}_1^*$, for example 
in Fig. \ref{Fig_Sub_doms_Ovelap}(c) we have  $F_{o12}=\mathbf{\Phi}_1^*(\hat{F}_{1})$. 
On the other hand, the face $F_{o21}$ is an interior curve for  $\Omega_1^*$, see Figs. \ref{Fig_Sub_doms_Ovelap}(a),(c). 
Thus, one could try to see $F_{o21}$ as an image of a curve $\hat{F}_{o21}\subset\widehat{\Omega}$ under  the mapping $\mathbf{\Phi}_1^*$, i.e., $F_{o21}=\mathbf{\Phi}_1^*(\hat{F}_{o21})$. In that way, it would be advantageous to  have a parametric description of $\partial \Omega_{o21}$ using  the mapping $\mathbf{\Phi}_1^*$,  which in turn  would help  
 to link the diametrically opposite points $x_{o1}$ and $x_{o2}$, see (\ref{parmatric_lro}). 
This approach requires the computation of the inverse $\big(\mathbf{\Phi}_1^*\big)^{-1}$, which in general is very costly and demands the 
use of a Newton approach for solving  many nonlinear systems. We are thus led to see the faces of $\partial \Omega_{o21}$ 
as images of  both mappings $\mathbf{\Phi}_1^*$ and $\mathbf{\Phi}_2^*$. 
We note also that the mappings $\mathbf{\Phi}_{o12}$ and $\mathbf{\Phi}_{o21}$ are introduced and used 
only for  deriving the discretization error analysis. They are not used in the computation of
the entries of the system matrix of the discrete DG-IGA scheme, see also discussion in Subsection 
\ref{Implementation}. 
\end{remark}
\begin{remark}\label{normalerror}
In  Section \ref{Section_numerics}, we present examples where the 
normal vector $n_{F_{o12}}$ is not constant across the face $F_{o12}$. 
\end{remark}
}
\section{The patch-wise  problems and the  fluxes}
\label{section_3}
\textcolor{ black}{
We  compute a numerical solution in each $\Omega_i^*,\, i=1,2$ 
  using the corresponding diffusion coefficient $\rho_i$ and the corresponding  B-spline spaces defined in  $\Omega_i^*$, 
  lets say   $\mathbb{B}^{*}_{\mathbf{\Xi}_i,p}$. Therefore on $\Omega_{o21}$ we will have the coexistence of  two different numerical solutions and this makes the computation  of the  bounds for the error   
  $\|u-u^*_h\|_{DG}$ more complicated. The norm $\|\cdot\|_{DG}$ is defined in (\ref{3.5.2a}).  
  The idea in our approach  is to
 introduce local (patch-wise)   problems $ a_i^*(u_i^*,\phi_i^*)=l_{i,f}(\phi_i^*) $ in every $\Omega_i^*$, 
 with appropriate  bilinear forms $a_i^*(\cdot,\cdot)$.  Using the triangle inequality, we split the error as  $\|u-u^*_h\|_{DG} \leq \|u-u^*\|_{DG} + \|u^*-u^*_h\|_{DG}$.   Then we estimate every term separately. 
}

 \subsection{The patch-wise variational problems}
 Denote  $\cal{T}_H^*(\Omega):=\{\Omega_1^*,\Omega_2^*\}$,  let $\ell \geq 1$ be an integer and let the B-spline spaces $\mathbb{B}^{*}_{\mathbf{\Xi}_i,p}$ defined in $\Omega_i^*, \,i=1,2$. 
 Accordingly to the spaces  (\ref{01c_0}) and (\ref{0.0d1}), we introduce the spaces 
 \begin{align}\label{spaces_Omega*}
 \begin{split}
 	H^{\ell}(\cal{T}^*_{H}(\Omega)):=&\{u^*=(u^*_1,u^*_2):u_i^*\in H^{\ell}(\Omega^*_i),\, 
 	                  u^*_i|_{\partial \Omega_i^*\cap \partial \Omega}=0,\text{for}\,i=1,2\},\\
 	H^{\ell}_0(\cal{T}^*_{H}(\Omega)):=&\{u^*=(u^*_1,u^*_2):u_i^*\in H^{\ell}_0(\Omega^*_i),\, \text{for}\,i=1,2\}, \\
 	V^*_{\mathbb{B}}:=& \{\phi^*_h=(\phi_{1,h},\phi_{2,h}): \phi_{i,h}\in  \mathbb{B}^{*}_{\mathbf{\Xi}_i,p}, \text{for}\,i=1,2  \}.
 \end{split}
 \end{align}
In order to proceed, we first define    
the {DG}-norm $\|.\|_{DG}$ associated with  $\mathcal{T}^*_H(\Omega)$. 
For all $v\in V_{h}^{*}:=H^{\ell}(\cal{T}^*_{H}(\Omega))+
V^*_{\mathbb{B}}$,
\begin{align}\label{3.5.2a}
\hskip -0.0cm{
	\|v\|^2_{DG} =  \sum_{i=1}^2\Big(\rho_i\|\nabla v_i\|^2_{L^2(\Omega^*_i)} + 
                      \frac{\rho_i}{h}\|v_i\|^2_{L^2(\partial \Omega^*_i\cap \partial\Omega)}
                       +  \sum_{F_{oij}\subset \partial \Omega_i^*}   \frac{\{\rho\}}{h}\|v_i  \|^2_{L^2(F_{oij})}\Big), {\ }\text{for}{\ }1\leq i\neq j\leq 2, }
\end{align} 
where  $F_{oij}$  are the interior faces related to  overlapping regions,
see Fig.~\ref{Fig_Sub_doms_Ovelap}(a), and $\textstyle{\{\rho\}=\frac{1}{2}(\rho_i+\rho_j)}$. 
\par
 We recall Assumption \ref{Assu_Omega_*Shape}. 
On each $\Omega_i^*,\,i=1,2$, we consider  the auxiliary problems:
 \begin{multicols}{2}
\noindent
\begin{subequations}\label{ArtProb_Omega*}
    \begin{equation} 
       (P1)\, \begin{cases}
            -\mathrm{div}(\rho_1 \nabla u_1^*) & = f,\, \text{in}\, \Omega_1^*, \\
                                            u_1^*    = u_D & \text{on}\,   \partial \Omega_1^*\cap \partial \Omega,\\ 
                                            u_1^*    = u  & \text{on}\,   F_{o12},\\ 
              \end{cases}
        \label{eq:ArtProb_Omega*_1}
    \end{equation}
    \begin{equation}
        (P2)\, \begin{cases}
            -\mathrm{div}(\rho_2 \nabla u_2^*) & = f,\, \text{in}\, \Omega_2^*, \\
                                            u_2^*    = u_D & \text{on}\,    \partial \Omega_2^*\cap \partial \Omega,\\ 
                                            u_2^*    = u  & \text{on}\,  F_{o21},\\ 
              \end{cases}
        \label{eq:ArtProb_Omega*_2}
    \end{equation}
    \end{subequations}
\end{multicols}
 and furthermore,  we  consider the corresponding  variational problems,  
\begin{subequations}\label{Artf_Vart_Problm}
\begin{align}\label{Artf_Vart_Problm_0}
\nonumber
 \text{(P1V)\,  find $u_1^*\in H^1_{ \partial \Omega_1^*\cap \partial \Omega}(\Omega_1^*)$ such that}{\ }& \\
  \,u_1^*=u\,\text{on}\,{F_{o12}},{\ } 
 \text{and}{\ }	& \quad a_1^*(u_1^*,\phi_1) = l^*_{1, f}(\phi_1), {\ }\text{for }{\ }\phi_1\in H^{1}_0(\Omega_1^*),
 \end{align}
 where
 \begin{align}\label{Artf_Vart_Problm_01}
 a_{1}^{*}(u_{1}^{*},\phi_1) =&  \int_{\Omega^*_1}\rho_1\nabla u_{1}^{*}\cdot\nabla \phi_1\,dx, 
 {\ }\text{and}{\ }  l^{*}_{1,f}(\phi_1)        = \int_{\Omega^*_1} f\phi_1\,dx,
\end{align}
 \begin{align}\label{Artf_Vart_Problm_1}
 \nonumber
 \text{(P2V)\,  find $u_2^*\in H^1_{\partial \Omega_2^*\cap \partial \Omega}(\Omega_2^*)$ such that}{\ }& \\
 u_2^*=u\,\text{on}\,F_{o21},{\ }
\text{and}{\ }& \quad  	a_2^*(u_2^*,\phi_2) = l^*_{2, f}(\phi_2), {\ }\text{for }{\ }\phi_2\in H^{1}_0(\Omega_2^*),
 \end{align}
 where
 \begin{align}\label{Artf_Vart_Problm_2}
 a_{2}^{*}(u_{2}^{*},\phi_2) =&  \int_{\Omega^*_2}\rho_2\nabla u_{2}^{*}\cdot\nabla \phi_2\,dx, 
 {\ }\text{and}{\ }    l^{*}_{2,f}(\phi_2)        = \int_{\Omega^*_2} f\phi_2\,dx,
\end{align}
\end{subequations} 
 \begin{remark}\label{NonConsistenceArt}
 By Assumption  \ref{Assu_Omega_*Shape} and the definition of problem (P1), we can imply that  
 the solution $u$ of (\ref{4a}) satisfies the problem (P1V). The definition of (P2) and the fact that $\rho_2\neq \rho_1$ on $\Omega_{o21}$ imply  that $u$ of (\ref{4a})   does not satisfy  the problem   (P2V). 
 \end{remark}
 According to Assumption \ref{Assumption1}, we make the following  assumption.
 \begin{assume}\label{Appr_in_omega_o}
The solutions $u_i^*,\,i=1,2$  in  (\ref{Artf_Vart_Problm})  belong to  $H^{\ell}(\cal{T}^*_{H}(\Omega))$ 
with $\ell\geq 2$.
\end{assume}
In Appendix, see Subsection \ref{Apend_u_minus_u*}, we give an estimate for the distance of the solutions $u$ and $u^*$. 
\subsection{The non-consistent   {terms}.}
We multiply the problem (\ref{eq:ArtProb_Omega*_2}) by $\phi_{2,h} \in \mathbb{B}^{*}_{\mathbf{\Xi}_2,p}$, 
integrate over $\Omega_2^*$ and apply integration by parts, then after few calculations 
we find that 
\begin{align}\label{0_PetrVF}
\begin{split}
\int_{\Omega^*_{2}}&\rho_2\nabla u_2^*\cdot \nabla \phi_{2,h}\,dx 
                    -   \int_{\partial \Omega^*_2}\rho_2\nabla u_2^*\cdot n_{\partial \Omega^*_2} \phi_{2,h}\,d\sigma \\
 =&\int_{\Omega_{o21}}\rho_1\nabla u_2^*\cdot \nabla \phi_{2,h}\,dx 
                   + \int_{\Omega_{2}}\rho_2\nabla u_2^*\cdot \nabla \phi_{2,h}\,dx  
  - \int_{\partial \Omega_2^*\cap\partial \Omega}\rho_2\nabla u_2^*\cdot n_{\partial \Omega_2} \phi_{2,h}\,d\sigma 
  \\
  \hskip -0.73cm{
 -}
 &\int_{F_{o21}} {\rho_1}\nabla u_2^*\cdot n_{F_{o21}}\phi_{2,h}\,d\sigma
    - \int_{F_{o12}} {\rho_1}\nabla u_2^*\cdot n_{F_{o12}}\phi_{2,h}\,d\sigma  
    - \int_{F_{o12}} {\rho_2}\nabla u_2^*\cdot (-n_{F_{o12}})\phi_{2,h}\,d\sigma \\  
 +\int_{\Omega_{o21}}(\rho_2-\rho_1)&\nabla u_2^*\cdot \nabla \phi_{2,h}\,dx - 
 \int_{F_{o12}} (\rho_2-\rho_1)\nabla u_2^*\cdot n_{F_{o12}}\phi_{2,h}\,d\sigma 
  - \int_{F_{o21}} (\rho_2-\rho_1)\nabla u_2^*\cdot n_{F_{o21}}\phi_{2,h}\,d\sigma   \\
= l^{*}_{2,f}(\phi_{2,h}),
\end{split}
  \end{align}
  and in a similar way,   multiplying 
  the problem (\ref{eq:ArtProb_Omega*_1}) by $\phi_{1,h} \in \mathbb{B}^{*}_{\mathbf{\Xi}_1,p}$, 
  we  have
  \begin{alignat}{2}
  \int_{\Omega^*_{1}}\rho_1\nabla u_1^*\cdot \nabla \phi_h\,dx 
 - \int_{F_{o12}} {\rho_1}\nabla u_1^*\cdot n_{F_{o12}}\phi_h\,d\sigma - 
 \int_{\partial \Omega^*_1\cap\partial \Omega}\rho_1\nabla u_1^*\cdot n_{\partial \Omega_1} \phi_{1,h}\,d\sigma=
 l^{*}_{1,f}(\phi_{1,h}).
  \end{alignat}
  We define the forms
  \begin{subequations}\label{PetrVF_wh}
\begin{align}
\hskip -0.8cm{
a_{2,h}^{*}(u_2^*,\phi_{2,h}):=\int_{\Omega^*_{2}} }  &\rho_2\nabla u_2^*\cdot \nabla \phi_{2,h}\,dx 
 - \int_{F_{o21}} {\rho_2}\nabla u_2^*\cdot n_{F_{o21}}\phi_{2,h}\,d\sigma - 
 \int_{\partial \Omega^*_2\cap\partial \Omega}\rho_2\nabla u_2^*\cdot n_{\partial \Omega_2} \phi_{2,h}\,d\sigma, \\
 \hskip -0.8cm{
a_{1,h}^{*}(u_1^*,\phi_{1,h}):=\int_{\Omega^*_{1}} }  &\rho_1\nabla u_1^*\cdot \nabla \phi_{1,h}\,dx 
 - \int_{F_{o12}} {\rho_1}\nabla u_1^*\cdot n_{F_{o12}}\phi_{1,h}\,d\sigma - 
 \int_{\partial \Omega^*_1\cap\partial \Omega}\rho_1\nabla u_1^*\cdot n_{\partial \Omega_1} \phi_{1,h}\,d\sigma  
  \end{align}
\end{subequations} 
  and also
   \begin{subequations}\label{resd_form}
  \begin{align}
 \hskip -0.85cm { a_{o,2}(u_2^*,\phi_{2,h}) =}&
  \int_{\Omega_{o21}}\rho_1\nabla u_2^*\cdot \nabla \phi_{2,h}\,dx 
                   + \int_{\Omega_{2}}\rho_2\nabla u_2^*\cdot \nabla \phi_{2,h}\,dx  - 
   \int_{\partial \Omega_2^* \cap \partial \Omega}\rho_2\nabla u_2^*\cdot n_{\partial \Omega_2^*} \phi_{2,h}\,d\sigma                  
  \\
  \nonumber
 - &\int_{F_{o21}} {\rho_1}\nabla u_2^*\cdot n_{F_{o21}}\phi_{2,h}\,d\sigma
    - \int_{F_{o12}} {\rho_1}\nabla u_2^*\cdot n_{F_{o12}}\phi_{2,h}\,d\sigma  
    - \int_{F_{o12}} {\rho_2}\nabla u_2^*\cdot (-n_{F_{o12}})\phi_{2,h}\,d\sigma \\  
  a_{res}(u_2^*,\phi_{2,h}) = &
  \int_{\Omega_{o21}}(\rho_2-\rho_1)\nabla u_2^*\cdot \nabla \phi_{2,h}\,dx - 
  \int_{F_{o12}} (\rho_2-\rho_1)\nabla u_2^*\cdot n_{F_{o12}}\,\phi_{2,h}\,d\sigma \\
  \nonumber
  -& \int_{F_{o21}} (\rho_2-\rho_1)\nabla u_2^*\cdot n_{F_{o21}}\phi_{2,h}\,d\sigma.
 \end{align}
 \end{subequations} 
By (\ref{0_PetrVF}), (\ref{PetrVF_wh}) and  (\ref{resd_form}), we get  that
\begin{alignat}{2}\label{PetrVF}
a_{2,h}^{*}(u_2^*,\phi_h) = a_{o,2}(u_2^*,\phi_{2,h})& +a_{res}(u_2^*,\phi_{2,h}) 
= l^{*}_{2,f}(\phi_{2,h}),
\end{alignat}
Also  for the solution $u$ of (\ref{4a}) we have that 
\begin{align}\label{1_VaraForm_u_Omega*}
\int_{\Omega_{o21}}\rho_1\nabla u\cdot \nabla \phi_{2,h}\,dx 
                   + \int_{\Omega_{2}}\rho_2\nabla u\cdot \nabla \phi_{2,h}\,dx  
 - \int_{F_{o21}} {\rho_1}\nabla u\cdot n_{F_{o21}}\phi_{2,h}\,d\sigma \\
 \nonumber
-    \int_{\partial \Omega_2^* \cap \partial \Omega}\rho_2\nabla u\cdot n_{\partial \Omega_2^*} \phi_{1,h}\,d\sigma
 =
 l^{*}_{2,f}(\phi_{2,h}).
  \end{align}
From the conditions (\ref{Interf_Cond}),  the 
forms defined in (\ref{PetrVF_wh}), (\ref{resd_form}) and the relations  (\ref{PetrVF}) and (\ref{1_VaraForm_u_Omega*}), we   derive that 
 \begin{subequations}\label{A_VaraForm_u_Omega*}
\begin{align}\label{2_VaraForm_u_Omega*}
	&a_{o,2}(u_2^*,\phi_{2,h}) +a_{res}(u_2^*,\phi_{2,h}) = a_{o,2}(u,\phi_{2,h})=l^{*}_{2,f}(\phi_{2,h})\\
	\intertext{and}
	\label{3_VaraForm_u_Omega*}
& a_{2,h}^{*}(u,\phi_{2,h}) - a_{res}(u,\phi_{2,h})  = l^{*}_{2,f}(\phi_{2,h}).
\end{align}
 \end{subequations}
 By a simple application of divergence theorem, we get
\begin{alignat}{2}\label{resd_form_1}
\begin{split}
  a_{res}(u,\phi_{2,h}) = &
  \int_{\Omega_{o21}}-\mathrm{div}\big((\rho_2-\rho_1)\nabla u\big)\phi_{2,h}\,dx   = 
  \int_{\Omega_{o21}}\frac{(\rho_2-\rho_1)}{\rho_1}f\,\phi_{2,h}\,dx.
  \end{split}
 \end{alignat}
Finally, by   (\ref{3_VaraForm_u_Omega*}) and (\ref{resd_form_1}), we deduce that
\begin{align}\label{PetrVF_2}
a_{2,h}^{*}(u,\phi_{2,h})
+ 
\int_{\Omega_{o21}}\frac{(\rho_1-\rho_2)}{\rho_1}f\,\phi_{2,h}\,dx=l^{*}_{2,f}(\phi_{2,h}).
\end{align}
\begin{proposition}\label{Propos_PoincareOmega_o}
 Let $\phi_{2,h} \in \mathbb{B}^{*}_{\mathbf{\Xi}_2,p}$. There is a $c>0$ dependent on $\rho$ but
 independent of $u$ and $\Omega_{o21}$ such that
 \begin{align}\label{PetrVF_3}
  \|\phi_{2,h}\|^2_{L^2(\Omega_{o21})} \leq c d_o\,h\big(\int_{\Omega_2^*}|\nabla\phi_{2,h}|^2\,dx+
  \frac{\{\rho\}}{h}\int_{F_{o21}} \phi^2_{2,h}\,d\sigma.\big)
 \end{align}
\end{proposition}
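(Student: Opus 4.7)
The plan is to exploit the thinness of $\Omega_{o21}$ in the normal direction to $F_{o21}$: parametrize the strip by $y \in F_{o21}$ and $t \in [0, \ell(y)]$ with $\ell(y) \le d_o$, express $\phi_{2,h}$ inside the strip via its trace on $F_{o21}$ plus a line integral of its gradient, and then integrate.

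\textbf{Step 1 (Tubular parametrization).} By Assumption \ref{Assu_Omega_*Shape}, the face $F_{o21}$ is a smooth graph and, because $d_o \le Ch$ by (\ref{5b_1_o}), the mapping $(y,t)\mapsto y+t\,n_{F_{o21}}(y)$ is a bi-Lipschitz diffeomorphism from $F_{o21}\times[0,\ell(y)]$ onto $\Omega_{o21}$ with Jacobian uniformly bounded above and below. Consequently, for any $g\in L^1(\Omega_{o21})$,
\begin{equation*}
\int_{\Omega_{o21}} g(x)\,dx \;\sim\; \int_{F_{o21}} \int_{0}^{\ell(y)} g(y+t\,n_{F_{o21}}(y))\,dt\,d\sigma(y).
\end{equation*}

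\textbf{Step 2 (Fundamental theorem of calculus and Cauchy–Schwarz).} For every $x=y+t\,n_{F_{o21}}(y)\in\Omega_{o21}$,
\begin{equation*}
\phi_{2,h}(x) = \phi_{2,h}(y) + \int_{0}^{t} n_{F_{o21}}(y)\cdot\nabla\phi_{2,h}(y+s\,n_{F_{o21}}(y))\,ds.
\end{equation*}
Squaring, using $(a+b)^2\le 2a^2+2b^2$, and Cauchy–Schwarz on the $s$-integral, I get
\begin{equation*}
\phi_{2,h}(x)^2 \;\le\; 2\phi_{2,h}(y)^2 + 2t\int_0^t |\nabla\phi_{2,h}(y+s\,n_{F_{o21}}(y))|^2\,ds.
\end{equation*}

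\textbf{Step 3 (Fubini on the strip).} Integrating the previous inequality in $t\in[0,\ell(y)]$ (with $\ell(y)\le d_o$) and then in $y\in F_{o21}$, using Step 1, yields
\begin{equation*}
\|\phi_{2,h}\|^2_{L^2(\Omega_{o21})} \;\le\; C\Bigl(d_o\,\|\phi_{2,h}\|^2_{L^2(F_{o21})} + d_o^2\,\|\nabla\phi_{2,h}\|^2_{L^2(\Omega_{o21})}\Bigr).
\end{equation*}

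\textbf{Step 4 (Conclusion).} Since (\ref{5b_1_o}) gives $d_o\le Ch$, we have $d_o^2\le C\,d_o h$, and since $\rho$ is uniformly positive, $1\le (2/\rho_{\min})\{\rho\}=c\{\rho\}$. Bounding $\|\nabla\phi_{2,h}\|_{L^2(\Omega_{o21})}\le\|\nabla\phi_{2,h}\|_{L^2(\Omega_2^*)}$ and rearranging so the common factor $d_o h$ appears in front of a term $\int_{\Omega_2^*}|\nabla\phi_{2,h}|^2+(\{\rho\}/h)\int_{F_{o21}}\phi_{2,h}^2$ gives exactly (\ref{PetrVF_3}).

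\textbf{Main obstacle.} The only non-cosmetic point is justifying that the tubular parametrization of $\Omega_{o21}$ has a uniformly bounded Jacobian; this relies on Assumption \ref{Assu_Omega_*Shape} (so the opposite face is a smooth graph with bounded curvature) together with the smallness assumption $d_o\le Ch$, which prevents the normal rays from crossing inside $\Omega_{o21}$. Note also that the discrete/B-spline structure of $\phi_{2,h}$ is \emph{not} used — the inequality is in fact an $H^1$-type Poincaré estimate on a thin layer — so no inverse inequality is required.
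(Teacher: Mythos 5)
Your proof is correct, but it takes a genuinely different route from the paper's. The paper proves (\ref{PetrVF_3}) by applying the divergence theorem to the auxiliary vector field $\mathbf{v}=(0,\,y\,\phi_{2,h}^2)$ on $\Omega_{o21}$ (with $y$ the coordinate transverse to the overlap, vanishing on $F_{o12}$), which produces the identity $\int_{\Omega_{o21}}\phi_{2,h}^2\,dx+\int_{\Omega_{o21}}2y\,\phi_{2,h}\,\partial_y\phi_{2,h}\,dx=\int_{F_{o21}}y\,\phi_{2,h}^2\,d\sigma$; Young's inequality, $y\le d_o$ and $d_o^2\le d_o h$ then give the claim. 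You instead run a one-dimensional Poincar\'e/trace argument along normal fibers: fundamental theorem of calculus from the trace on $F_{o21}$, Cauchy--Schwarz, and Fubini over the tubular parametrization. Both arguments land on the same intermediate estimate $\|\phi_{2,h}\|^2_{L^2(\Omega_{o21})}\lesssim d_o\|\phi_{2,h}\|^2_{L^2(F_{o21})}+d_o^2\|\nabla\phi_{2,h}\|^2_{L^2(\Omega_{o21})}$ and finish identically via $d_o\le Ch$ and the uniform positivity of $\rho$. The paper's computation is shorter and needs no explicit parametrization of the strip, but it silently assumes the transverse coordinate is aligned with $F_{o12}$ and absorbs the normal-component factor in the boundary integral; your version is dimension-independent and makes explicit where the constant's dependence on the geometry of $F_{o21}$ (curvature, Jacobian of the tubular map) enters — which is precisely the "main obstacle" you flag, and it is adequately covered by Assumption \ref{Assu_Omega_*Shape} together with (\ref{5b_1_o}). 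Your closing observation that the B-spline structure of $\phi_{2,h}$ plays no role is also accurate: the paper's proof likewise uses no inverse inequality, so the result is really an $H^1$ thin-layer estimate.
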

\begin{proof}
 Let $\mathbf{v}=(0,y\phi_{2,h}^2)$. The divergence theorem for $\mathbf{v}$ on $\Omega_{o21}$ 
  yields, 
 \begin{align}\label{PetrVF_4}
 \int_{\Omega_{o21}}\phi_{2,h}^2\,dx+
 \int_{\Omega_{o21}}2y\phi_{2,h}\,\partial_y\phi_{2,h}\,dx=\int_{F_{o21}} y\phi^2_{2,h}\,d\sigma.
 \end{align}
 Using that $y\leq d_o$ and applying (\ref{HolderYoung}) in (\ref{PetrVF_4}) we obtain 
 \begin{align}\label{PetrVF_5}
  \|\phi_{2,h}\|^2_{L^2(\Omega_{o21})} \leq \big(\epsilon^2\int_{\Omega_{o21}}\phi_{2,h}^2\,dx+
  \frac{4}{\epsilon^2}\int_{\Omega_{o21}}d_o^2\,|\nabla\phi_{2,h}|^2\,dx+
  d_o h \frac{1}{h}\int_{F_{o21}} \phi^2_{2,h}\,d\sigma\big)
 \end{align}
Gathering similar terms and choosing $\epsilon$ appropriately small,  we get
\begin{align}\label{PetrVF_6}
 c_{1,\epsilon} \|\phi_{2,h}\|^2_{L^2(\Omega_{o21})} \leq c_{2,\epsilon}c_\rho\,d_oh\big(
  \int_{\Omega_2^*}\rho_2|\nabla\phi_{2,h}|^2\,dx+
   \frac{\{\rho\}}{h}\int_{F_{o21}} \phi^2_{2,h}\,d\sigma\big),
 \end{align}
where we  used  that  $d_o^2\leq d_oh$. Rearranging appropriately the constants in (\ref{PetrVF_6}) yields
(\ref{PetrVF_3}).
 \BLACKBOX
\end{proof}

\begin{corollary}\label{fphi_hOmega_o}
	Let $f\in L^{\infty}(\Omega)$, $\phi_{2,h}\in \mathbb{B}^{*}_{\mathbf{\Xi}_2,p}$ and let $u_2^*$ and $u$ be the solutions of  (\ref{Artf_Vart_Problm_2}) and  (\ref{4a}) respectively. 
	There are  constants $c_1,\,c_{\rho}>0$ dependent on $F_{o21}$ but independent
	of $h$ such that
	\begin{subequations}\label{fphi_hOmega_o1}
	\begin{align}\label{fphi_hOmega_o1_A}
		\int_{\Omega_{o21}}f\phi_{2,h}\,dx & \leq c_1\,d_o\, \|f\|_{L^\infty(\Omega_{o21})}\|\phi_{2,h}\|_{DG}, \\
		\label{fphi_hOmega_o1_B}
		\textcolor{black}{
		|a_{res}(u,\phi_{2,h})| }       & 
		\textcolor{black}{
		\leq c_{\rho}\,d_o\, \|f\|_{L^\infty(\Omega_{o21})}\|\phi_{2,h}\|_{DG},
		}
	\end{align}
	\end{subequations}
\end{corollary}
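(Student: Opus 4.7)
The plan is to reduce the first estimate \eqref{fphi_hOmega_o1_A} to a routine combination of H\"older's inequality, the trivial geometric bound $|\Omega_{o21}|\le |F_{o21}|\,d_o$, and the Poincar\'e-type estimate of Proposition~\ref{Propos_PoincareOmega_o}, and then to deduce the residual bound \eqref{fphi_hOmega_o1_B} from \eqref{fphi_hOmega_o1_A} via the identity \eqref{resd_form_1}.

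For \eqref{fphi_hOmega_o1_A}, I would first apply H\"older's inequality to obtain
$\int_{\Omega_{o21}} f\phi_{2,h}\,dx \le \|f\|_{L^\infty(\Omega_{o21})}\,|\Omega_{o21}|^{1/2}\,\|\phi_{2,h}\|_{L^2(\Omega_{o21})}$. From Assumption~\ref{Assu_Omega_*Shape} and the definition \eqref{5b_0_o} of $d_o$, the overlap region has width at most $d_o$ in the normal direction, so $|\Omega_{o21}|\le |F_{o21}|\,d_o$ and therefore $|\Omega_{o21}|^{1/2}\le C\sqrt{d_o}$, with a constant depending only on $|F_{o21}|$. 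Proposition~\ref{Propos_PoincareOmega_o} then provides
$\|\phi_{2,h}\|_{L^2(\Omega_{o21})}\le C\sqrt{d_o h}\,\|\phi_{2,h}\|_{DG}$,
since the two terms on the right of \eqref{PetrVF_3} are bounded (up to the $\rho$-weights) by the squared DG-norm \eqref{3.5.2a}. Multiplying the two bounds produces a factor $d_o\sqrt{h}$, and because $h$ is bounded from above in the asymptotic regime, the $\sqrt{h}$ is absorbed into the constant $c_1$, yielding \eqref{fphi_hOmega_o1_A}.

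For \eqref{fphi_hOmega_o1_B}, the identity \eqref{resd_form_1} (obtained by applying the divergence theorem, exploiting that $\rho_1,\rho_2$ are constant in $\Omega_{o21}$, and using $-\mathrm{div}(\rho_1\nabla u)=f$ in $\Omega_{o21}\subset\Omega_1=\Omega_1^*$) rewrites the residual as
$a_{res}(u,\phi_{2,h}) = \int_{\Omega_{o21}} \tfrac{\rho_2-\rho_1}{\rho_1}\,f\,\phi_{2,h}\,dx$. Applying \eqref{fphi_hOmega_o1_A} with $f$ replaced by $\tfrac{\rho_2-\rho_1}{\rho_1}f$ gives \eqref{fphi_hOmega_o1_B} with $c_\rho = \big|\tfrac{\rho_2-\rho_1}{\rho_1}\big|\,c_1$, which depends on $\rho$ as claimed.

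The main subtlety is the bookkeeping of $h$-factors. Proposition~\ref{Propos_PoincareOmega_o} is calibrated so that the weighted boundary term $\{\rho\}/h\,\|\cdot\|^2_{L^2(F_{o21})}$ in \eqref{PetrVF_3} matches exactly the corresponding term of $\|\cdot\|_{DG}$, but the direct combination of the H\"older step and the Poincar\'e step produces an extra $\sqrt{h}$. One has to make it explicit that under the standing quasi-uniformity and the mesh-size restriction $h\le h_0$, this additional factor is harmless, so the claimed linear-in-$d_o$ bound follows with a constant independent of $h$ and depending only on $\rho$, $\|f\|_{L^\infty}$ is only entering multiplicatively, and the geometry of $F_{o21}$.
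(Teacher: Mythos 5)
Your proposal is correct and follows essentially the same route as the paper: Cauchy--Schwarz/H\"older combined with $|\Omega_{o21}|^{1/2}\leq c_{F_{o21}}d_o^{1/2}$ to extract one factor $d_o^{1/2}$, Proposition~\ref{Propos_PoincareOmega_o} to control $\|\phi_{2,h}\|_{L^2(\Omega_{o21})}$ by $(d_o h)^{1/2}\|\phi_{2,h}\|_{DG}$, and the identity (\ref{resd_form_1}) to reduce (\ref{fphi_hOmega_o1_B}) to (\ref{fphi_hOmega_o1_A}). Your explicit remark that the resulting $d_o\sqrt{h}$ is absorbed into the constant (since $h$ is bounded) makes the bookkeeping more transparent than the paper's ``follows easily.''
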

\begin{proof}
	It follows by from  the Cauchy-Schwartz inequality that
	\begin{align}\label{fphi_hOmega_o2}
	\int_{\Omega_{o21}}f\phi_{2,h}\,dx \leq \|f\|_{L^2(\Omega_{o21})}\|\phi_{2,h}\|_{L^2(\Omega_{o21})}\leq 
	c_{F_o21}d_o^{\frac{1}{2}}\|f\|_{L^\infty(\Omega_{o21})}\|\phi_{2,h}\|_{L^2(\Omega_{o21})}.
	\end{align}
   Using (\ref{PetrVF_3}) in (\ref{fphi_hOmega_o2}), the required assertion follows easily. \\
   Inequality (\ref{fphi_hOmega_o1_B}) follows immediately from (\ref{resd_form_1}) and (\ref{fphi_hOmega_o1_A}).
\BLACKBOX
\end{proof}

\subsection{The discrete problem}
In this section, we use the bilinear forms given in (\ref{PetrVF_wh}) to 
define the patch-wise discrete problems. 
Using the  conditions on $F_{o21}$ and $F_{o12}$, which are given in (P1) and (P2), the 
Assumption \ref{Appr_in_omega_o}, we imply  the following  interface conditions 
 \begin{align}\label{NormalJumpOverl_F*}
  \rho_1\nabla u_{2}^{*}\cdot n_{F_{o12}} = \rho_2\nabla u^*_{2}\cdot n_{F_{o12}}{\ }\text{on $F_{o12}$, \quad and} \quad
   u_{1}^{*}-u^*_{2}=0 \text{on}\,F_{o21}.
 \end{align}
Next, using Taylor expansions, we  appropriately modify the  flux terms $\int_{F_{o21}} {\rho_2}\nabla u_2^*\cdot n_{F_{o21}}\phi_{2,h}\,d\sigma$ and 
 $\int_{F_{o12}} {\rho_1}\nabla u_1^*\cdot n_{F_{o12}}\phi_{1,h}\,d\sigma$ appearing in (\ref{PetrVF_wh}). 
\subsubsection{Taylor expansions  }
Let $x,y\in \overline{\Omega}^*_2$ and let  $f\in C^{m\geq 2}(\overline{\Omega}^*_2)$.
We recall 
Taylor's formula with integral remainder
\begin{subequations}\label{7_b}
\begin{align}\label{7_b_1}
f(y)=&f(x) +\nabla f(x)\cdot(y-x) + R^2f(y +s(x-y)),\\
 f(x)=&f(y) -\nabla f(y)\cdot (y-x)  + R^2 f(x +s(y-x)),
\end{align}
\end{subequations}
where $R^2 f(y +s(x-y))$ and $R^2 f(x +s(y-x))$ are the second order remainder terms defined by
\begin{subequations}\label{7_b_2}
\begin{align}
    R^2 f(y +s(x-y))=   \sum_{|\alpha|=2}(y-x)^\alpha\frac{2}{\alpha !}
        \int_{0}^1sD^\alpha  f(y +s(x-y))\,d s, \\
    R^2 f(x +s(y-x))=   \sum_{|\alpha|=2}(x-y)^\alpha\frac{2}{\alpha !}
        \int_{0}^1sD^\alpha  f(x +s(y-x))\,d s.   
\end{align}
\end{subequations}
By (\ref{7_b}) it follows that  
\begin{subequations}\label{7_1c}
\begin{align}
\label{7_1c_a}
	\nabla f(y) \cdot (y-x)  = &\nabla f(x) \cdot (y-x)  +\big(  R^2 f(x +s(y-x))+ R^2f(y +s(x-y)) \big ), \\
\label{7_1c_b}
	  -f(x) = &-f(y)+\nabla f(x)\cdot (y-x)  + R^2f(y +s(x-y)).
\end{align}
\end{subequations}
\subsubsection{Modifications of the   fluxes on $\partial{\Omega_{o21}}$ }
To illustrate  the use of (\ref{7_b}) to (\ref{7_1c}) in our analysis, we consider the simple case 
of Fig. \ref{Fig_Sub_doms_Ovelap}(a). 
 Let the points  $x_{o1}\in F_{o12}$ and $x_{o2}\in F_{o21}$ be such that 
 $ x_{o2}=\mathbf{\Phi}_{o12}(x_{o1}) $ as in Fig. \ref{Fig_Sub_doms_Ovelap}(a). 
 These points  play the role of the points $x$ and $y$ in (\ref{7_b}). Then
  for a smooth function $f$ we have
 \begin{alignat}{2}\label{7_1cA}
\begin{split}
	f(x_{o1}) =& f(x_{o2})   +\nabla f(x_{o2})\cdot (x_{o1}-x_{o2})  + R^2 f(x_{o1} +s(x_{o2}-x_{o1})) \\
	         =&  f(\mathbf{\Phi}_{o12}(x_{o1})) 
	             +\nabla f(\mathbf{\Phi}_{o12}(x_{o1})) \cdot (x_{o1}-x_{o2})  + R^2 f(x_{o1} +s(x_{o2}-x_{o1})).
\end{split}
\end{alignat}
 Now denoting $r_{o12}=x_{o1}-x_{o2}$ and using the assumption that $r_{o12}=-r_{o21}$, 
 see Section \ref{overlapping_region},  we  obtain that
 $\textstyle{n_{F_{o12}}=\frac{r_{o12}}{|r_{o12}|}=-n_{F_{o21}} }$.
For keeping notation simple,  we  denote the Taylor's residuals  as 
$R^2u^*_{x_{o1}}:=R^2 u^*(x_{o1} +s(x_{o2}-x_{o1}))$ and 
$R^2u^*_{x_{o2}}:=R^2 u^*(x_{o2} +s(x_{o1}-x_{o2}))$.
Using   (\ref{7_1cA}) and interface conditions (\ref{NormalJumpOverl_F*}),  
we  modify the  fluxes in (\ref{PetrVF_wh}) as follows

\begin{subequations}\label{VF_o1_1}
\begin{alignat}{2}
\begin{split}
\int_{F_{o21}}& \rho_2\nabla u_2^*(x_{o2})\cdot n_{F_{o21}}  \phi_{2,h}\,d\sigma - 
\frac{\{\rho\}}{h}\int_{F_{o21}} (u_2^*(x_{o2})-u_1^*(x_{o2}))\phi_{2,h}\,d\sigma \\
                          =&
  \int_{F_{o21}} \frac{1}{2}\rho_2\nabla u_2^*(x_{o2}) \phi_{2,h}  \,d\sigma 
 -\frac{\{\rho\}}{h}\int_{F_{o21}}(u_2^*(x_{o2})-u_{1}^*(\mathbf{\Phi}_{o21}(x_{o2}))\phi_{2,h} \,d\sigma \\
 +&
 \int_{F_{o21}}\frac{\{\rho\}}{h}\big(|r_{o12}|\nabla u_{2}^*(x_{o2})\cdot n_{F_{o21}} +
 R^2u^*_{x_{o2}} \big ) \phi_{2,h}  \,d\sigma,
 \end{split}
 \end{alignat}
 where   $\{\rho\}=\frac{1}{2}(\rho_1+\rho_2)$. Similarly,  we have
\begin{alignat}{2}
\begin{split}
\int_{F_{o12}} \rho_1\nabla & u_1^*(x_{o1})\cdot n_{F_{o12}}  \phi_{1,h}\,d\sigma =
\int_{F_{o12}}   \rho_1\nabla u_1^*(x_{o1})\cdot n_{F_{o12}} \phi_{1,h}\,d\sigma -  
 \frac{\{\rho\}}{h}\int_{F_{o12}} (u_1^*(x_{o1})-u_1^*(x_{o1}))\phi_{1,h}\,d\sigma \\
 =
 \int_{F_{o12}} &  \rho_1\nabla u_1^*(x_{o1})\cdot n_{F_{o12}} \phi_{1,h}\,d\sigma -  
 \frac{\{\rho\}}{h}\int_{F_{o12}} (u_1^*(x_{o1})-u_2^*(\mathbf{\Phi}_{o12}(x_{o1})))\phi_{1,h}\,d\sigma \\
+
 \int_{F_{o12}}&\frac{\{\rho\}}{h}\big(|r_{o21}|\nabla u_{1}^*(x_{o1})\cdot n_{F_{o12}} +
 R^2u^*_{x_{o1}} \big ) \phi_{1,h}  \,d\sigma .
  \end{split}
\end{alignat}
\end{subequations}

\subsubsection{The global modified form}
We consider the global bilinear form $a^*(\cdot,\cdot): V_h^*\times V_{\mathbb{B}}^* \rightarrow \mathbb{R}$, which is 
formed by the  contributions of $a_{i,h}^{*}(\cdot,\cdot),\,i=1,2$
given in (\ref{PetrVF_wh}) and  the flux forms  given in (\ref{VF_o1_1}), that is 
\begin{multline}\label{VF_o2}
a^{*}(u^*,\phi_{h})=a_{2,h}^{*}(u_2^*,\phi_{2,h})+a_{1,h}^{*}(u_1^*,\phi_{1,h})=
\int_{\Omega_1^*}\rho_1\nabla u_1^*\cdot \nabla \phi_{1,h}\,dx  
+\int_{\Omega_2^*}\rho_2\nabla u_2^*\cdot \nabla \phi_{2,h}\,dx \\
-   \int_{\partial \Omega_1^* \cap \partial \Omega}\rho_1\nabla u_1^*\cdot n_{\partial \Omega_1^*} \phi_{1,h}\,d\sigma  - 
   \int_{\partial \Omega_2^* \cap \partial \Omega}\rho_2\nabla u_2^*\cdot n_{\partial \Omega_2^*} \phi_{2,h}\,d\sigma \\
   + \frac{\rho_1}{h}\int_{\partial \Omega_1^* \cap \partial \Omega}(u_1^*-u_D)\phi_{1,h}\,d\sigma 
   + \frac{\rho_2}{h}\int_{\partial \Omega_2^* \cap \partial \Omega}(u_2^*-u_D)\phi_{2,h}\,d\sigma\\
   -\int_{F_{o12}}  \Big( \rho_1\nabla u_1^*(x_{o1})\cdot n_{F_{o12}}       
  +\frac{\{\rho\}}{h}(u_1^*(x_{o1})-u_2^*(\mathbf{\Phi}_{o12}(x_{o1}))\Big)\phi_{1,h}\,d\sigma  \\ 
   -\int_{F_{o21}} \Big(\rho_2\nabla u_2^*(x_{o2})\cdot n_{F_{o21}}  
 +\frac{\{\rho\}}{h}(u_2^*(x_{o2})-u_{1}^*(\mathbf{\Phi}_{o21}(x_{o2}))\Big) \phi_{2,h}  \,d\sigma \\  
  -\int_{F_{o21}}      \frac{\{\rho\}}{h}\big(|r_{o12}|\nabla u_{2}^*(x_{o2})\cdot n_{F_{o21}} +
 R^2u^*_{x_{o2}} \big )\phi_{2,h}\,d\sigma \\
 -\int_{F_{o12}}    \frac{\{\rho\}}{h}\big(|r_{o21}|\nabla u_{1}^*(x_{o1})\cdot n_{F_{o12}} +
         R^2u^*_{x_{o1}} \big ) \phi_{1,h}  \,d\sigma.
 \end{multline}
\begin{remark}\label{modif_for_u_and_u^*}
	Note that  the exact 
	solution $u$ has similar  regularity properties to the solution $u^*$, 
	see Assumption \ref{Assumption1}, and thus  we can derive for $u$ 
	an analogous formulation as this  in (\ref{VF_o2}).   
\end{remark}
\subsubsection{The DG-IGA scheme.}
In view of (\ref{VF_o2}),  we    define the  forms
$ A_{\Omega_i^*}(\cdot,\cdot):V_h^*\times V_{\mathbb{B}}^* \rightarrow \mathbb{R}$, 
$R_{\Omega_{o21}}(\cdot,\cdot):V_h^*\times V_{\mathbb{B}}^*  \rightarrow \mathbb{R}$,
and the linear functional
$l_{f, \Omega_i^*}:V_{\mathbb{B}}^* \rightarrow \mathbb{R}$ by
\vskip -0.4cm
 \begin{subequations}\label{7_d3_b}
\begin{align}\label{Var_Form_Gap_Overl}
A_{\Omega_i^*}(u^*,\phi_h) =& \sum_{i=1}^2\Big(\int_{\Omega^*_i}\rho_i\nabla u_{i}^{*}\cdot \nabla \phi_{i,h},dx - 
   \int_{\partial \Omega^*_i \cap \partial \Omega}\rho_i\nabla u_{i}^{*}\cdot n_{\partial \Omega^*_i} \phi_{i,h}\,d\sigma \\
  \nonumber
 -&\sum_{F_{oij}\subset \partial\Omega_i^*}  \int_{F_{oij}}\rho_i\nabla u_{i}^{*}\cdot n_{F_{oij}}\phi_{i,h}  -
  \frac{\eta\{\rho\}}{h}\big(u_{i}^{*}-u_{j}^{*}\big)\phi_{i,h}\, d\sigma\Big) \\
  -&  \sum_{i=1}^2 \int_{\partial \Omega_i^* \cap \partial \Omega}
  \rho_i\nabla u_i^*\cdot n_{\partial \Omega_i^*} \phi_{i,h}\,d\sigma,\quad  
  {\ } 1\leq i\neq j\leq 2,
  \end{align}
 \vskip -0.2cm
  \begin{alignat}{2}\label{Var_Form_Residuals}
\begin{split}
R_{\Omega_{o21}}(u^*,\phi_h) =&\int_{F_{o21}}  
                           -\frac{\{\rho\}}{h}\big(|r_{o12}|\nabla u_{2}^*(x_{o2})\cdot n_{F_{o21}} +
                            R^2u^*_{x_{o2}} \big )\phi_{2,h}\,d\sigma \\
                           -\int_{F_{o12}}   &
                        \frac{\{\rho\}}{h}\big(|r_{o21}|\nabla u_{1}^*(x_{o1})\cdot n_{F_{o12}} +
                          R^2u^*_{x_{o1}} \big ) \phi_{1,h}  \,d\sigma \\
 l_{f,\Omega_i^*}(\phi_h) = & \sum_{i=1}^2 \int_{\Omega_i^*}f\phi_{i,h}\,dx,
 \end{split}
\end{alignat}
\end{subequations}
where $\eta >0$ is a parameter that is going to be determined later. 
Based on  the forms defined  in (\ref{7_d3_b}), 
we introduce the discrete  bilinear form
$A_h(\cdot,\cdot):V_{\mathbb{B}}^*\times V_{\mathbb{B}}^* \rightarrow \mathbb{R}$
 and the linear form $F_h: V_{\mathbb{B}}^* \rightarrow \mathbb{R}$
as follows
\begin{equation}\label{7_d5}
	A_h(u_{h}^{*},\phi_h) = A_{\Omega_i^*}(u_{h}^{*},\phi_h) +
	\sum_{i=1}^2\frac{\eta\rho_i}{h}\int_{\partial\Omega_i^* \cap \partial \Omega}u_{i,h}^{*} \phi_{i,h}\,d\sigma,
\end{equation}
\begin{equation}\label{7_d6}
	F_h(\phi_h) = l_{f,\Omega_i^*}(\phi_h)+
	\sum_{i=1}^2\frac{\eta\rho_i}{h}\int_{\partial\Omega_i^*\cap \partial \Omega}u_D \phi_{i,h}\,d\sigma.
\end{equation} 
Finally, the DG-IGA scheme reads as follows:
find $u_{h}^{*}\in V_{\mathbb{B}}^* $ such that
\begin{equation}\label{7_d4}
	A_h(u_{h}^{*},\phi_h) = F_h(\phi_h), \quad \text{for all} {\ }\phi_h\in V_{\mathbb{B}}^*.
\end{equation}
\textcolor{black}{
\begin{remark}\label{Cons_dgIgA_for_u}
From the relations (\ref{PetrVF}), (\ref{A_VaraForm_u_Omega*}), the Remark \ref{modif_for_u_and_u^*} and 
the forms given in (\ref{7_d5}) and in (\ref{7_d6}),  we can derive that 
	 \begin{align}\label{Cons_dgIgA_for_u_1}
	 \begin{split}
	 a_{2,h}^{*}&(u_2^*,\phi_{2,h})+a_{1,h}^{*}(u_1^*,\phi_{1,h}) 
  \\
	  = & 
	  a_{o,2}(u_2^*,\phi_{2,h}) +a_{res}(u_2^*,\phi_{2,h}) +a_{1,h}^{*}(u_1^*,\phi_{1,h}) 
\\
 =&	 A_h(u^*,\phi_h)+	R_{\Omega_{o21}}(u^*,\phi_h)  \\
 =& a_{o,2}(u,\phi_{2,h}) + a_{1,h}^*(u,\phi_{1,h}) 
\\
=& a_{2,h}(u,\phi_{2,h}) -a_{res}(u,\phi_{2,h}) +a_{1,h}^{*}(u,\phi_{1,h}) \\ 
=&  A_h(u,\phi_h)+	R_{\Omega_{o21}}(u,\phi_h)  -a_{res}(u,\phi_{2,h}) \\
=&  F_h(\phi_h), \hskip 8 cm \qquad
	 {\ }\text{for}{\ }\phi_h:=(\phi_{1,h},\phi_{2,h})\in V_{\mathbb{B}}^*.
	 \end{split}
	 \end{align}	
\end{remark}
}
Below,  we quote  few  results  that  are useful for our  error analysis. 
For the proofs we refer to
\cite{HoferLangerToulopoulos_2016_SISC}, \cite{HoferToulopoulos_IGA_Gaps_2015a} and \cite{Report_Hofer_Langer_ToulopoulosDGIGA_GapOver2016}. 
\begin{lemma}\label{lemma4}
Under the assumption  (\ref{5b_1_o}),  there exist  positive constants 
$C_1$ and $C_2$ independent of $h$ such that the estimates
 \begin{flalign}\label{7_i0_1}
  | R_{\Omega_{o21}}(u,\phi_h)|\leq  C_{1} \mathcal{K}_{o}(u)
  \|\phi_h\|_{DG}\,  h^{\lambda-0.5}, &\quad
  | R_{\Omega_{o21}}(u^*,\phi_h)|\leq  C_{2} \mathcal{K}_{o}(u^*)
  \|\phi_h\|_{DG}\,  h^{\lambda-0.5}, \quad  
   \end{flalign}
   hold    for the solutions $u^*$ and $u$,
   and $\phi_h \in V_{\mathbb{B}}^*$, 
   where   $\mathcal{K}_{o}(v)= \|\nabla v\|_{L^2(\partial \Omega_{o21})}+
   \|\sum_{|\alpha|=2}|D^\alpha v|\|_{L^2(\Omega_{o21})}$.
  \end{lemma}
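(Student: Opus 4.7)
\textbf{Proof proposal for Lemma \ref{lemma4}.}
The plan is to bound $R_{\Omega_{o21}}(v,\phi_h)$ (for $v=u$ or $v=u^*$) by splitting the integrand of each face integral in (\ref{Var_Form_Residuals}) into two qualitatively different pieces: a \emph{first-order} piece $|r_{oij}|\nabla v\cdot n_{F_{oij}}$ and a \emph{second-order Taylor remainder} piece $R^2 v_{x_{oi}}$. For each piece I exploit the overlap-width smallness $|r_{oij}|\le d_o\le C\,h^{\lambda}$ coming from (\ref{5b_1_o}) to absorb the inverse-$h$ factor in front, and then use Cauchy--Schwarz on the trace faces together with the penalty part of the DG norm (\ref{3.5.2a}) to pull out $\|\phi_h\|_{DG}$.

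For the first-order piece, I apply Cauchy--Schwarz on $F_{o21}$ (and similarly $F_{o12}$):
\begin{equation*}
\Bigl|\int_{F_{o21}} \frac{\{\rho\}}{h}|r_{o12}|\,\nabla v_2\!\cdot\! n_{F_{o21}}\,\phi_{2,h}\,d\sigma\Bigr|
\le \frac{\{\rho\}\,d_o}{h}\,\|\nabla v_2\|_{L^2(F_{o21})}\,\|\phi_{2,h}\|_{L^2(F_{o21})}.
\end{equation*}
The DG norm (\ref{3.5.2a}) gives $\|\phi_{2,h}\|_{L^2(F_{o21})}\le C\,\sqrt{h/\{\rho\}}\,\|\phi_h\|_{DG}$, and together with $d_o\le C h^{\lambda}$ this yields the bound $C\,\|\nabla v\|_{L^2(\partial\Omega_{o21})}\,\|\phi_h\|_{DG}\,h^{\lambda-1/2}$, which is the first contribution to $\mathcal{K}_o(v)$.

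For the Taylor remainder piece, the integral form (\ref{7_b_2}) gives the pointwise control
$|R^2 v_{x_{o2}}|\le C\,|r_{o12}|^{2}\!\int_{0}^{1}s\sum_{|\alpha|=2}|D^{\alpha}v(x_{o2}+s(x_{o1}-x_{o2}))|\,ds$. Squaring and applying Jensen, then pulling out $|r_{o12}|^4\le C\,d_o^{4}$, gives
\begin{equation*}
\|R^2 v_{\cdot}\|^{2}_{L^{2}(F_{o21})}\le C\,d_o^{4}\!\int_{F_{o21}}\!\int_{0}^{1}\!\sum_{|\alpha|=2}|D^{\alpha}v(x_{o2}+sr)|^{2}\,ds\,d\sigma.
\end{equation*}
The key step is the change of variables $(x_{o2},s)\mapsto y=x_{o2}+s(x_{o1}(x_{o2})-x_{o2})$, which parametrizes $\Omega_{o21}$. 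Its Jacobian behaves like $|r_{o12}(x_{o2})|\sim d_o$ under Assumption \ref{Assu_Omega_*Shape} and the smoothness of $\zeta_0$ in (\ref{5a0_o}), so $ds\,d\sigma$ transforms to $dy/|r_{o12}|$. One thus obtains
$\|R^2 v_{\cdot}\|^{2}_{L^{2}(F_{o21})}\le C\,d_o^{3}\,\bigl\|\sum_{|\alpha|=2}|D^{\alpha}v|\bigr\|^{2}_{L^{2}(\Omega_{o21})}$.
Dividing by $h$, multiplying by $\|\phi_h\|_{L^2(F_{o21})}\le C h^{1/2}\|\phi_h\|_{DG}/\sqrt{\{\rho\}}$, and using $d_o^{3/2}\le C h^{3\lambda/2}$, the Taylor contribution is bounded by $C\,h^{3\lambda/2-1/2}\,\mathcal{K}_o(v)\,\|\phi_h\|_{DG}$, which for $\lambda\ge 1$ is absorbed into the dominant order $h^{\lambda-1/2}$.

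Adding the two contributions on $F_{o21}$ and the analogous contributions on $F_{o12}$, and noting that $u$ and $u^{*}$ enter only through $\nabla v$ and $D^{\alpha}v$ with $|\alpha|=2$ (available by Assumption \ref{Assumption1} and Assumption \ref{Appr_in_omega_o}, respectively), gives the claimed estimates with constants $C_1,C_2$ depending on $\rho$ and on the geometry of $\partial\Omega_{o21}$ but independent of $h$. The main technical obstacle is the Jacobian analysis for the map $(x_{o2},s)\mapsto y$: one must verify that $|r_{o12}|$ is comparable to $d_o$ uniformly along the interior part of the overlap (i.e.\ away from the endpoints where $F_{o12}$ touches $\partial\Omega$) so that division by $|r_{o12}|$ is controlled; this is where the regularity of $\zeta_0$ and the assumption $n_{F_{o12}}\approx-n_{F_{o21}}$ implicit in (\ref{5b_1_o}) become essential.
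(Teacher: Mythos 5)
The paper itself does not prove Lemma~\ref{lemma4} in the text; it quotes the result and refers to the earlier gap-region papers, where the argument is of exactly the type you propose. Your overall structure is the standard (and correct) one: split each face integral in (\ref{Var_Form_Residuals}) into the first-order term and the Taylor remainder, use Cauchy--Schwarz on the face together with the penalty part of the DG-norm (\ref{3.5.2a}) to extract $\sqrt{h/\{\rho\}}\,\|\phi_h\|_{DG}$, bound $|r_{oij}|\le d_o\le Ch^{\lambda}$ from (\ref{5b_1_o}), and convert the line integral of the squared remainder into a volume integral over $\Omega_{o21}$ by sweeping along the segments joining diametrically opposite points, which yields the factor $d_o^{3/2}$ and hence the order $h^{3\lambda/2-1/2}$, absorbed into $h^{\lambda-1/2}$ for $\lambda\ge 1$. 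The resulting constants depend only on $\rho$ and the geometry, as claimed.

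The one step whose stated justification would fail is the Jacobian argument you flag at the end. Pulling out $|r_{o12}|^4\le d_o^4$ \emph{before} the change of variables forces you to divide by $|r_{o12}|$ afterwards, and the uniform comparability $|r_{o12}|\sim d_o$ that you then invoke is in general false: under Assumption~\ref{Assu_Omega_*Shape} the local width is $\zeta_0$, which degenerates to zero where the two faces $F_{o12}$ and $F_{o21}$ meet (the lateral boundary of $\Omega_{o21}$), so $|r_{o12}|$ admits no lower bound of the form $c\,d_o$ there. The repair is a re-ordering, not a new geometric hypothesis: keep one factor of $|r_{o12}(x_{o2})|$ inside the surface integral and bound only the remaining three, i.e.
\begin{equation*}
\int_{F_{o21}}|r_{o12}|^{4}\int_{0}^{1}\Bigl(\sum_{|\alpha|=2}|D^{\alpha}v|\Bigr)^{2}ds\,d\sigma
\;\le\; d_o^{3}\int_{F_{o21}}\int_{0}^{1}\Bigl(\sum_{|\alpha|=2}|D^{\alpha}v|\Bigr)^{2}|r_{o12}|\,ds\,d\sigma
\;\le\; C\,d_o^{3}\,\Bigl\|\sum_{|\alpha|=2}|D^{\alpha}v|\Bigr\|^{2}_{L^{2}(\Omega_{o21})},
\end{equation*}
since $|r_{o12}|\,ds\,d\sigma\simeq dy$ is precisely the volume element of the sweep map. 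What is then needed is only the non-degeneracy of that map (injectivity up to bounded overlap and Jacobian bounded below by $c\,|r_{o12}|$), which follows from the graph description (\ref{5a0_o}) and $n_{F_{o12}}\approx-n_{F_{o21}}$; no lower bound on $|r_{o12}|$ enters anywhere. With this modification your proof is complete and consistent with the cited proofs.
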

  
  \begin{lemma}\label{lemma_0}
	The bilinear form $A_h(\cdot,\cdot)$ in (\ref{7_d5}) is bounded and elliptic on $V_{\mathbb{B}}^*$, i.e., 
	 there are positive constants $C_M$ and $C_m$ such that the estimates
	\begin{align}\label{B_dG_bound}
		A_h(v_h,\phi_h) \leq C_M \|v_h\|_{DG}\|\phi_h\|_{DG}
		\quad \text{and}\quad  A_h(v_h,v_h) \geq C_m \|v_h\|^2_{DG},                             
	\end{align}
	hold for all $v_h,\,\phi_h\in V_{\mathbb{B}}^*$ 
	{ provided that $\eta$ is sufficiently large.}
\end{lemma}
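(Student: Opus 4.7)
The plan is to establish both inequalities by standard DG-type arguments: boundedness by term-by-term Cauchy--Schwarz together with a discrete trace inequality, and coercivity by testing with $\phi_h=v_h$, absorbing the consistency fluxes via Young's inequality, and choosing the penalty parameter $\eta$ sufficiently large. Throughout I will use the discrete trace inequality $\|\nabla v_h\cdot n\|_{L^2(F)}\le C h^{-1/2}\|\nabla v_h\|_{L^2(E)}$ for any $v_h\in V^*_{\mathbb{B}}$ and any element $E$ adjacent to the face $F$; this is a standard B--spline inverse estimate (see e.g.\ \cite{LT:LangerToulopoulos:2014a}, \cite{HoferLangerToulopoulos_2016_SISC}), justified here by Assumption~\ref{smooth_Phi_i} and Assumption~\ref{Assumption2}.

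For boundedness, I would split $A_h(v_h,\phi_h)$ into four groups according to (\ref{Var_Form_Gap_Overl}) and (\ref{7_d5}): the volume bilinear form, the Dirichlet flux, the interior face fluxes across $F_{oij}$, and the two penalty terms. The volume term is handled immediately by Cauchy--Schwarz against $\rho_i^{1/2}\|\nabla v_{i,h}\|_{L^2(\Omega_i^*)}$ and the analogous quantity for $\phi_{i,h}$. Each flux term $\int_F \rho_i\nabla v_{i,h}\cdot n\,\phi_{i,h}\,d\sigma$ is controlled by Cauchy--Schwarz followed by the discrete trace inequality on the $\nabla v_{i,h}$ factor and the boundary trace on $\phi_{i,h}$, producing the product $\rho_i^{1/2}\|\nabla v_{i,h}\|_{L^2(\Omega_i^*)}\cdot(\rho_i/h)^{1/2}\|\phi_{i,h}\|_{L^2(F)}$, which is exactly the pairing appearing in the DG norm. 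The penalty terms are bounded directly by Cauchy--Schwarz; the only subtlety is the cross term $\int_{F_{oij}} v_{j,h}\,\phi_{i,h}\,d\sigma$, where $v_{j,h}$ lives on the opposite patch. I would apply the change of variables induced by $\mathbf{\Phi}_{o12}$ (or $\mathbf{\Phi}_{o21}$) from (\ref{parmatric_lro})--(\ref{parmatric_rlo}), which by Assumption~\ref{smooth_Phi_i} and $d_o\le C h^\lambda$ has a Jacobian bounded above and below, so that $\|v_{j,h}\|_{L^2(F_{oij})}$ is comparable to the norm on the diametrically opposite face that is contained in $\|v_h\|_{DG}$.

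For coercivity I set $\phi_h=v_h$ in (\ref{7_d5}). The volume integrals contribute $\sum_i\rho_i\|\nabla v_{i,h}\|^2_{L^2(\Omega_i^*)}$ with coefficient $1$, and the boundary penalty contributes $\sum_i(\eta\rho_i/h)\|v_{i,h}\|^2_{L^2(\partial\Omega_i^*\cap\partial\Omega)}$. Summing the interior penalty over $i$ and $j$ and applying the elementary identity $a(a-b)\ge \tfrac12 a^2-\tfrac12 b^2$ together with the $\mathbf{\Phi}_{o12}$ change of variables on the cross term gives, with a positive coefficient proportional to $\eta$, the face part $\sum_{F_{oij}}\frac{\{\rho\}}{h}\|v_{i,h}\|^2_{L^2(F_{oij})}$. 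The remaining flux terms $-\int_F\rho_i\nabla v_{i,h}\cdot n\,v_{i,h}\,d\sigma$ (interior and Dirichlet) are split by Young's inequality as
\begin{equation*}
  \left|\int_F\rho_i\nabla v_{i,h}\cdot n\,v_{i,h}\,d\sigma\right|
  \le \epsilon\,h\,\rho_i\|\nabla v_{i,h}\cdot n\|^2_{L^2(F)}
  +\frac{1}{4\epsilon}\frac{\rho_i}{h}\|v_{i,h}\|^2_{L^2(F)},
\end{equation*}
and the discrete trace inequality converts the first term into $\epsilon C\,\rho_i\|\nabla v_{i,h}\|^2_{L^2(\Omega_i^*)}$. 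Choosing $\epsilon$ small (e.g.\ so that $\epsilon C\le \tfrac12$) absorbs a fraction of the gradient contribution, and then choosing $\eta$ large enough so that both the boundary and interior penalty contributions retain positive coefficients after subtracting the $1/(4\epsilon)$ term yields $A_h(v_h,v_h)\ge C_m\|v_h\|^2_{DG}$.

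The principal obstacle is the interplay between the non-matching overlapping faces $F_{o12}$ and $F_{o21}$ in the penalty analysis: the quantity $v_{j,h}$ in $(v_{i,h}-v_{j,h})v_{i,h}$ is really $v_{j,h}\circ\mathbf{\Phi}_{oij}$, defined on the opposite face, so a clean algebraic bound of the form $\ge\tfrac12 v_{i,h}^2-\tfrac12 v_{j,h}^2$ is only available after transporting one of the integrals via $\mathbf{\Phi}_{oij}$. The smoothness of $\mathbf{\Phi}_{oij}$ and the smallness of $d_o$ guarantee the required $\mathcal{O}(1)$ Jacobians; the resulting constants, together with the discrete trace constant and $\rho_{\max}/\rho_{\min}$, determine the explicit threshold for $\eta$.
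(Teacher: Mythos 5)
You should note first that the paper never proves this lemma itself: it is quoted with a pointer to \cite{HoferLangerToulopoulos_2016_SISC}, \cite{HoferToulopoulos_IGA_Gaps_2015a} and \cite{Report_Hofer_Langer_ToulopoulosDGIGA_GapOver2016}, so your argument has to stand on its own. The boundedness half does: Cauchy--Schwarz, the B-spline inverse/trace estimate, and the $\mathbf{\Phi}_{o12}$ change of variables with bounded Jacobian are exactly how the paper treats the analogous terms in Lemma \ref{lemma00}. The coercivity half, however, breaks at the decisive point, the interior faces. The identity $a(a-b)\ge\tfrac12 a^2-\tfrac12 b^2$ gives $\int_{F_{o12}}(v_{1,h}-v_{2,h}\circ\mathbf{\Phi}_{o12})v_{1,h}\,d\sigma\ \ge\ \tfrac12\|v_{1,h}\|^2_{L^2(F_{o12})}-\tfrac{C}{2}\|v_{2,h}\|^2_{L^2(F_{o21})}$ after transporting the cross term, and the analogous bound on $F_{o21}$ with a constant $C'$, where $C,C'$ are Jacobian bounds that tend to $1$ but need not be below $1$. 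Summing the two faces, the transported terms cancel the direct ones: the total is bounded below only by $\tfrac{1-C'}{2}\|v_{1,h}\|^2_{L^2(F_{o12})}+\tfrac{1-C}{2}\|v_{2,h}\|^2_{L^2(F_{o21})}$, i.e.\ essentially by the squared jump, hence by (at best) zero, and possibly by a slightly negative multiple of the traces. So the claim that the penalty produces the face part of $\|\cdot\|_{DG}$ ``with a positive coefficient proportional to $\eta$'' is wrong; enlarging $\eta$ multiplies a quantity that is not positive, and the Young leftovers $\tfrac{1}{4\epsilon}\tfrac{\rho_i}{h}\|v_{i,h}\|^2_{L^2(F_{oij})}$ coming from the one-sided fluxes $\int_{F_{oij}}\rho_i\nabla v_{i,h}\cdot n_{F_{oij}}\,v_{i,h}\,d\sigma$ then have nothing to absorb them, since the jump penalty vanishes precisely in the worst case $v_{1,h}\approx v_{2,h}$ with nonzero trace on the overlap boundary.

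This is not a constant-chasing issue that a sharper Jacobian estimate repairs. With the face terms of (\ref{3.5.2a}) read literally as one-sided traces, no choice of $\eta$ gives an $h$-uniform $C_m$: take $v_h=(\Pi^{*}_{1,h}w,\Pi^{*}_{2,h}w)$ for a smooth $w$ vanishing on $\partial\Omega$ but not on $F_{o12}$; every term of $A_h(v_h,v_h)$ stays bounded as $h\to0$ (the jump across the overlap faces is of order $h^{s+\frac12}+d_o$), while $\|v_h\|^2_{DG}\gtrsim h^{-1}\|w\|^2_{L^2(F_{o12})}$ blows up. A workable version of your plan therefore has to (i) interpret the interface contribution of the DG norm as the jump $\|v_{i,h}-v_{j,h}\circ\mathbf{\Phi}_{oij}\|_{L^2(F_{oij})}$, which is what the scheme actually penalizes and what the cited earlier works control, and (ii) rearrange the interior consistency fluxes before applying Young: testing $\rho_1\nabla v_{1,h}\cdot n\,v_{1,h}$ and $\rho_2\nabla v_{2,h}\cdot n\,v_{2,h}$ face by face leaves one-sided traces, so the two faces must first be recombined (up to $\mathcal{O}(d_o)$ geometric perturbations) into an average-flux-times-jump term, as in the symmetric fluxes described in Subsection \ref{Implementation}, after which the absorption with small $\epsilon$ and large $\eta$ goes through. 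As written, the interior-face portion of your coercivity argument does not go through.
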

  \begin{lemma}\label{lemma00} Let the  assumption  (\ref{5b_1_o}) and let $\beta=\lambda-\frac{1}{2}$. Then there is a constant 
{ $C_* > 0$} depending on the parametrization mappings but  independent of $h$ such that the inequality 
\begin{align}\label{7_d7a}
A_h(v,\phi_h)\leq & { C_*} \Big( \big(\|v\|^2_{DG} + \sum_{i=1}^2
   h\,\|\rho_i^{\frac{1}{2}} \nabla v\|^2_{L^2(\partial \Omega^*_{i})}\big)^{\frac{1}{2}} \Big)
   \|\phi_h\|_{DG},
\end{align}
 holds  for all   $(v,\phi_h) \in V_h^* \times V_{\mathbb{B}}^*$ and $(v,\phi_h)\in (V+ V_{\mathbb{B}}^*)\times V_{\mathbb{B}}^*$. 
\end{lemma}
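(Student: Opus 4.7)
The plan is to expand $A_h(v,\phi_h)$ using (\ref{7_d5}) and (\ref{Var_Form_Gap_Overl}), to bound each of the resulting contributions by a Cauchy--Schwarz argument, and then to recombine them with the discrete Cauchy--Schwarz inequality on the sum. The integrand naturally splits into three families: (a) bulk diffusion pairings $\int_{\Omega_i^*}\rho_i\nabla v\cdot\nabla\phi_{i,h}\,dx$, (b) penalty/jump pairings with weight $h^{-1}$ on the overlap faces $F_{oij}$ and on $\partial\Omega_i^*\cap\partial\Omega$, and (c) normal--flux pairings $\int_F\rho_i\nabla v\cdot n\,\phi_{i,h}\,d\sigma$ on the same faces. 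Family (a) is immediate: Cauchy--Schwarz in $L^2(\Omega_i^*)$ yields the product $\|\rho_i^{1/2}\nabla v\|_{L^2(\Omega_i^*)}\|\rho_i^{1/2}\nabla\phi_{i,h}\|_{L^2(\Omega_i^*)}$, and both factors are controlled, respectively, by $\|v\|_{DG}$ and $\|\phi_h\|_{DG}$ via (\ref{3.5.2a}). Family (b) is handled by inserting the factorisation $h^{-1}=h^{-1/2}\cdot h^{-1/2}$ under the face integral and applying Cauchy--Schwarz; both resulting pieces are exactly penalty contributions inside the corresponding DG-norms.

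The main obstacle lies in family (c). Since $v$ is only assumed to live in $V_h^*$ or in $V+V_{\mathbb{B}}^*$, the discrete inverse trace estimate $h^{1/2}\|\nabla v\|_{L^2(F)}\lesssim\|\nabla v\|_{L^2(E)}$, which was implicitly used in the ellipticity argument of Lemma~\ref{lemma_0}, is no longer available on the $v$-side. The remedy is to split the factor $1=h^{1/2}\cdot h^{-1/2}$ at the level of the face integral and apply Cauchy--Schwarz in the form
\begin{equation*}
\Bigl|\int_F\rho_i\nabla v\cdot n\,\phi_{i,h}\,d\sigma\Bigr|
\leq
\bigl(h\,\|\rho_i^{1/2}\nabla v\|^2_{L^2(F)}\bigr)^{1/2}
\bigl(h^{-1}\|\rho_i^{1/2}\phi_{i,h}\|^2_{L^2(F)}\bigr)^{1/2}.
\end{equation*}
The first factor reassembles precisely the extra contribution $h\,\|\rho_i^{1/2}\nabla v\|^2_{L^2(\partial\Omega_i^*)}$ that appears on the right-hand side of (\ref{7_d7a}), while the second factor, using $\rho_i\leq 2\{\rho\}$ on overlap faces, is, up to a constant, one of the summands of $\|\phi_h\|_{DG}^2$.

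After summing over $i=1,2$ and over the two families of faces, I would absorb into a single mapping-dependent constant $C_*$ the penalty parameter $\eta$, the coefficient contrast $\rho_i/\{\rho\}$, the trace constants from (\ref{Poincare_trace}), and the Jacobian factors arising under Assumption~\ref{smooth_Phi_i}. A final application of the elementary inequality $\sum_k a_k b_k\leq\bigl(\sum_k a_k^2\bigr)^{1/2}\bigl(\sum_k b_k^2\bigr)^{1/2}$ gathers all $v$-factors into $\|v\|^2_{DG}+\sum_i h\|\rho_i^{1/2}\nabla v\|^2_{L^2(\partial\Omega_i^*)}$ and all $\phi_h$-factors into $\|\phi_h\|^2_{DG}$, giving (\ref{7_d7a}). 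The statement then covers both function-space pairings simultaneously: when $v\in V_{\mathbb{B}}^*$ the added boundary term is dominated by $\|v\|_{DG}$ through the IGA inverse trace inequality and one recovers Lemma~\ref{lemma_0}, whereas for $v\in V+V_{\mathbb{B}}^*$ the enlarged norm is genuinely needed. The only delicate part is the bookkeeping of the face weights so that no spurious cross-term survives the final summation.
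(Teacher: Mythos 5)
Your proof is correct and follows essentially the same route as the paper's: Cauchy--Schwarz on the bulk terms, the $h^{1/2}\cdot h^{-1/2}$ splitting of the normal-flux face integrals to produce the extra term $h\,\|\rho_i^{1/2}\nabla v\|^2_{L^2(\partial\Omega_i^*)}$, and absorption of the remaining constants into $C_*$. The only step where the paper is more explicit is the jump term, where $\|v_2\circ\mathbf{\Phi}_{o12}\|_{L^2(F_{o12})}$ must be converted to $\|v_2\|_{L^2(F_{o21})}$ by a change of variables using $|J_{\mathbf{\Phi}_{o12}}|$ before it can be read off as a summand of $\|v\|_{DG}$; your ``Jacobian factors'' remark covers this in spirit, though it is the face mapping $\mathbf{\Phi}_{o12}$, not the patch parametrizations of Assumption~\ref{smooth_Phi_i}, whose Jacobian enters there.
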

\begin{proof} 
	Recall the definition of the pair function spaces in (\ref{spaces_Omega*}). 
	In view of the form of $A_h(\cdot,\cdot)$  and applying (\ref{HolderYoung}), we have
	\begin{align}\label{7_d7aA_1}
	\Big|\sum_{i=1}^2\Big(\int_{\Omega^*_i}\rho_i\nabla v_i\cdot \nabla \phi_{i,h}\,dx \Big| \leq 
	\Big(\sum_{i=1}^2 \|\rho_i^{\frac{1}{2}}\nabla v_i\|^2_{L^2(\Omega_i^*)}\Big)^{\frac{1}{2}}
	\Big(\sum_{i=1}^2 \|\rho_i^{\frac{1}{2}}\nabla \phi_{i,h}\|^2_{L^2(\Omega_i^*)}\Big)^{\frac{1}{2}}.
	\end{align}
Now, let us first show an estimate for the  normal fluxes on $F_{oij}$. 
Since $v \in V_h^*$ 	the normal traces on the interfaces are well defined. 
Using again (\ref{HolderYoung}), we obtain
      \begin{align}\label{7_d7aA_2}
      \begin{split}
	\Big|&   \int_{F_{oij}}\rho_i\nabla v_i\cdot n_{F_{oij}}\phi_{i,h}\,d\sigma \Big|  \leq C_i
	   \int_{F_{oij}}h^{\frac{1}{2}}\Big|\rho_i^{\frac{1}{2}}\nabla v_i\Big| \, 
	   \Big(\frac{ \{\rho\}}{h}\Big)^{\frac{1}{2}} \Big|\phi_{i,h} \Big|\,d\sigma 
	   	\\
	\leq &
	C_i \Big( h^{\frac{1}{2}}\|\rho_i^{\frac{1}{2}}\nabla v_{i}\|_{L^2(F_{oij})}\Big)
	\Big(\frac{\eta\{\rho\}}{h}\|\phi_{i,h}\|^2_{L^2(F_{oij})}\Big)^{\frac{1}{2}} 
	\leq 
	C_i \Big( h^{\frac{1}{2}}\|\rho_i^{\frac{1}{2}}\nabla v_i\|_{L^2(F_{oij})}\Big)
	\|\phi_h\|_{DG},
	\end{split}
	\end{align}
	for $1\leq i\neq j \leq 2. $
	\text{Also, we have } 
	\begin{align*}
	\begin{split}
	\hskip -0.5cm 
	\Big|\frac{\eta\{\rho\}}{h}  \int_{F_{o12}}\big(v_1-v_2(\mathbf{\Phi}_{o12})\big)\phi_{1,h}\, d\sigma\Big| 
	 \leq &
	 2  \Big(   \frac{\eta\{\rho\}}{h}
	 \int_{F_{o12}} v^2_1+ v^2_2(\mathbf{\Phi}_{o12} )\frac{|J_{\mathbf{\Phi}_{o12}}|}{|J_{\mathbf{\Phi}_{o12}}|}\,d\sigma \Big)^{\frac{1}{2}} \Big(   \frac{\eta\{\rho\}}{h}\|\phi_{1,h}  \|^2_{L^2(F_{o12})}\Big)^{\frac{1}{2}}\hskip 2cm\\
	 \leq C_{J_{\mathbf{\Phi}_{o12}}}
	  \Big(   \frac{\eta\{\rho\}}{h}
	 \|v_1  \|^2_{L^2(F_{o12})} + &
	 \frac{\eta\{\rho\}}{h}\|v_2 \|^2_{L^2(F_{o21})})\Big)^{\frac{1}{2}} \Big(   \frac{\eta\{\rho\}}{h}\|\phi_{1,h}  \|^2_{L^2(F_{o12})}\Big)^{\frac{1}{2}}
	 \leq  C_{J_{\mathbf{\Phi}_{o12}}} \|v\|_{DG}\,\|\phi_h\|_{DG},\\
	 \intertext{where $|J_{\mathbf{\Phi}_{o12}}|$ 
	 is the measure of the Jacobian of $\mathbf{\Phi}_{o12}$. In the same way, we show}
	\Big|\frac{\eta\{\rho\}}{h} & \int_{F_{o21}}\big(v_2-v_1(\mathbf{\Phi}_{o21})\big)\phi_h\, d\sigma\Big| 
	 \leq C_{J_{\mathbf{\Phi}_{o21}}} \|v\|_{DG}\,\|\phi_h\|_{DG}. 
	\end{split}
	\end{align*}
Gathering together the above bounds, we  show (\ref{7_d7a}). For the case where $(v,\phi_h)\in (V+ V_{\mathbb{B}}^*) \times V_{\mathbb{B}}^*$ 
we  work similarly. 
\BLACKBOX
\end{proof}
\subsection{Discretization error analysis}
\label{DiscreizErrosection}
Next, we discuss interpolation estimates that we will use to bound the discretization error. 
We recall the definition of the pair function spaces in (\ref{spaces_Omega*}). Let 
 $v \in H^{\ell}(\cal{T}^*_{H}(\Omega))$  with $\ell\geq 2$.
Under Assumptions \ref{smooth_Phi_i}, and using the results of
\cite{LT:Bazilevs_IGA_ERR_ESti2006a} and \cite{VeigaBuffaSangalli_2014}, we can construct a quasi-interpolant 
$\Pi^{*}_h v :=(\Pi^{*}_{1,h} v_1,(\Pi^{*}_{2,h} v_2)  \in V_{\mathbb{B}}^*$ such that
the 
estimates 
\begin{align}\label{Intep_Est_1}
\begin{split}
	\sum_{i=1,2}|v-\Pi^{*}_h v|_{H^1(\Omega_i^*)} \leq &   h^{s} \sum_{i=1,2}C_{1,i}\|v\|_{H^{\ell}(\Omega_i^*)}, \\
	\sum_{i=1,2}|v-\Pi^{*}_h v|_{L^2(\partial\Omega_i^*)} \leq &  h^{s-\frac{1}{2}} \sum_{i=1,2}C_{2,i}\|v\|_{H^{\ell}(\Omega_i^*)},
	\end{split}
\end{align}
hold, where $s=\min(\ell-1,p)$ and  the $C_{1,i}$, $C_{2,i}$ depend on  $p, \mathbf{\Phi}^*_i, \theta$   but not on $h$.  
 \begin{lemma}\label{lemma7}
 Let $v \in H^{\ell}(\cal{T}^*_{H}(\Omega))$ with $\ell\geq 2$  
 and let $\Pi^{*}_h v$ be  as in (\ref{Intep_Est_1}).   
 Then  there exist  constants $C_i>0$, $i=1,2$, depending on $p,\,\mathbf{\Phi}^*_i,\,i=1,2$ and 
 the quasi-uniformity of the meshes  but not on   $h$  such that 
  \begin{align}\label{4.5.d_1}
  	\Big(\|v-\Pi^{*}_h v\|^2_{DG} +\sum_{i=1}^2h\| \rho_i^{\frac{1}{2}} \nabla (v-\Pi^{*}_h v)\|^2_{L^2(\partial \Omega_i^*)}\Big)^{\frac{1}{2}}
  	  	& \leq   \sum_{i=1}^2 C_i h^{s}\|v\|_{H^{{\color{black}\ell}}(\Omega_i^*)},
  \end{align}
  where $s=\min(\ell-1,p)$.
 \end{lemma}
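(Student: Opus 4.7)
The plan is to bound each summand on the left-hand side separately, using the patch-wise quasi-interpolation estimates (\ref{Intep_Est_1}) together with a scaled element-wise trace inequality for the gradient contribution. Throughout the argument, all constants depend only on the quantities already listed in the statement, and the scaling in $h$ has been arranged so that every term contributes at most $h^{2s}\|v\|_{H^{\ell}(\Omega_i^*)}^2$.

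First I would expand $\|v-\Pi_h^* v\|_{DG}^2$ according to (\ref{3.5.2a}). The volume gradient term $\rho_i\|\nabla(v_i-\Pi_{i,h}^* v_i)\|_{L^2(\Omega_i^*)}^2$ is directly controlled by the first inequality of (\ref{Intep_Est_1}), giving $C h^{2s}\|v\|_{H^{\ell}(\Omega_i^*)}^2$. For the boundary $L^2$ contributions on $\partial\Omega_i^*\cap\partial\Omega$ and on the overlap faces $F_{oij}$, the second inequality of (\ref{Intep_Est_1}) yields $\|v_i-\Pi_{i,h}^* v_i\|_{L^2(\partial\Omega_i^*)}\leq C h^{s-\frac{1}{2}}\|v\|_{H^{\ell}(\Omega_i^*)}$; squaring and multiplying by the $h^{-1}$ weight in $\|\cdot\|_{DG}$ again produces the desired $h^{2s}$ bound (the $\rho_i$ and $\{\rho\}$ factors are absorbed into the constants).

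The only genuine extra work is the additional term $h\,\|\rho_i^{1/2}\nabla(v_i-\Pi_{i,h}^* v_i)\|_{L^2(\partial\Omega_i^*)}^2$, which is not controlled by the DG-norm itself. I would localize to the elements $E_m$ of $T^{(i)}_{h_i,\Omega_i^*}$ touching $\partial\Omega_i^*$ and apply the standard scaled trace inequality
\begin{equation*}
\|\nabla w\|_{L^2(\partial E_m)}^2 \leq C\bigl(h_{E_m}^{-1}\|\nabla w\|_{L^2(E_m)}^2 + h_{E_m}\,|w|_{H^2(E_m)}^2\bigr),
\end{equation*}
valid for $w\in H^2(E_m)$, to $w = v_i - \Pi_{i,h}^* v_i$. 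Multiplying by $h$ and summing over the boundary elements (using Assumption \ref{Assumption2} so that $h_{E_m}\sim h$) leaves $\|\nabla(v_i-\Pi_{i,h}^* v_i)\|_{L^2(\Omega_i^*)}^2 + h^2\,|v_i-\Pi_{i,h}^* v_i|_{H^2(\Omega_i^*)}^2$, up to constants. The first piece is again bounded by $h^{2s}\|v\|_{H^{\ell}(\Omega_i^*)}^2$ thanks to (\ref{Intep_Est_1}), and the second piece is bounded by the same quantity after invoking the analogous $H^2$ quasi-interpolation estimate $|v_i-\Pi_{i,h}^* v_i|_{H^2(\Omega_i^*)}\leq C h^{s-1}\|v\|_{H^{\ell}(\Omega_i^*)}$ from \cite{LT:Bazilevs_IGA_ERR_ESti2006a,VeigaBuffaSangalli_2014} (applicable since $\ell\geq 2$), which when multiplied by $h^2$ contributes exactly $h^{2s}$.

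The main obstacle, and indeed the only nontrivial point, is this gradient trace term: one must know that the IGA quasi-interpolant enjoys both an $H^1$ and an $H^2$ approximation estimate with the right exponents in $h$, and one must combine them in the correct proportions through the scaled trace inequality so that the extra factor $h$ in front absorbs the $h^{-1}$ appearing in that inequality. Once these ingredients are assembled, collecting all terms, taking the square root, and using $\sqrt{a+b}\leq \sqrt{a}+\sqrt{b}$ produces the claimed bound with the structure $\sum_{i=1}^2 C_i\,h^s\,\|v\|_{H^{\ell}(\Omega_i^*)}$.
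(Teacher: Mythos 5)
Your overall strategy --- expand the DG-norm, control each piece with the quasi-interpolation estimates, and handle the extra boundary gradient term with a scaled element-wise trace inequality combining the $H^1$ and $H^2$ interpolation bounds --- is exactly the route the paper takes; its proof is a one-line deferral to Lemma 10 of \cite{LT:LangerToulopoulos:2014a}, and your reconstruction of that argument, in particular your treatment of the term $h\,\|\rho_i^{1/2}\nabla(v-\Pi_h^*v)\|^2_{L^2(\partial\Omega_i^*)}$ via the inequality $\|\nabla w\|^2_{L^2(\partial E_m)}\le C\bigl(h_{E_m}^{-1}\|\nabla w\|^2_{L^2(E_m)}+h_{E_m}|w|^2_{H^2(E_m)}\bigr)$ together with the $H^2$ quasi-interpolation estimate, is correct and is indeed the only nontrivial point.

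There is, however, an arithmetic slip in your handling of the penalty terms. Taking the second line of (\ref{Intep_Est_1}) literally as $\|v_i-\Pi_{i,h}^*v_i\|_{L^2(\partial\Omega_i^*)}\le Ch^{s-\frac12}\|v\|_{H^\ell(\Omega_i^*)}$, squaring and multiplying by the weight $h^{-1}$ gives $Ch^{2s-2}\|v\|^2_{H^\ell(\Omega_i^*)}$, not $Ch^{2s}\|v\|^2_{H^\ell(\Omega_i^*)}$ as you assert; as written this loses a full power of $h$ in the final estimate. What is actually needed (and true) is the $L^2$-trace estimate with exponent $s+\tfrac12$: the quasi-interpolant also satisfies $\|v_i-\Pi_{i,h}^*v_i\|_{L^2(\Omega_i^*)}\le Ch^{s+1}\|v\|_{H^\ell(\Omega_i^*)}$, so the same multiplicative trace inequality you invoke for the gradient yields $\|v_i-\Pi_{i,h}^*v_i\|^2_{L^2(\partial\Omega_i^*)}\le C\bigl(h^{-1}h^{2s+2}+h\,h^{2s}\bigr)=Ch^{2s+1}$, and then $h^{-1}\cdot h^{2s+1}=h^{2s}$ as required. (The exponent $s-\tfrac12$ in (\ref{Intep_Est_1}) is the one appropriate to the trace of the \emph{gradient} of the interpolation error; as an $L^2$-trace bound it is off by one power of $h$, so you should either derive the $s+\tfrac12$ bound as above or cite it directly rather than rely on that line.) With this correction the remainder of your argument goes through and yields the stated estimate.
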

  \begin{proof}
  The   estimate (\ref{4.5.d_1}) can be shown using trace inequality and the  estimates (\ref{Intep_Est_1}), 
  see  details in   Lemma 10 in \cite{LT:LangerToulopoulos:2014a}.
   See also  \cite{HoferToulopoulos_IGA_Gaps_2015a} and \cite{HoferLangerToulopoulos_2016_SISC}.
  $\BLACKBOX$  
\end{proof}

\begin{theorem}\label{Theorem_1_estimates}
Let $\beta=\lambda -\frac{1}{2}$ and   $d_o = h^\lambda$ with $\lambda \geq 1$. Let $u^*\in H^{\ell}(\cal{T}^*_{H}(\Omega))$ with $\ell\geq 2$ be the solution of the  problems in  (\ref{Artf_Vart_Problm}), and
let $u_{h}^{*}\in V_{\mathbb{B}}^*$ be the corresponding DG-IGA solution of (\ref{7_d4}). 
Then the  error estimate
\begin{align}\label{4.5_e}
\|u^*-u_{h}^{*}\|_{DG} \lesssim  h^{r} \big( \sum_{i=1}^2\|u^*\|_{H^{{\ell}}(\Omega_i^*)}\big),
\end{align}
holds,  where $r=\min(s,\beta)$ with  $s=\min(\ell-1,p)$.
\end{theorem}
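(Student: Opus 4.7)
The plan is to follow a standard Strang-type argument adapted to the inconsistency introduced by the Taylor-expanded fluxes across the overlapping region. The key identity comes from Remark \ref{Cons_dgIgA_for_u}: applied to $u^*$ it yields
\begin{equation*}
A_h(u^*,\phi_h) + R_{\Omega_{o21}}(u^*,\phi_h) = F_h(\phi_h) \quad \text{for all } \phi_h \in V_{\mathbb{B}}^*,
\end{equation*}
while the discrete problem \eqref{7_d4} gives $A_h(u_h^*,\phi_h) = F_h(\phi_h)$. Subtracting produces the Galerkin-type orthogonality relation
\begin{equation*}
A_h(u^*-u_h^*,\phi_h) = -R_{\Omega_{o21}}(u^*,\phi_h),
\end{equation*}
which is the engine of the estimate: the residual $R_{\Omega_{o21}}$ measures the consistency error of the Taylor-expanded numerical fluxes.

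First I would split the error by the triangle inequality as $\|u^*-u_h^*\|_{DG}\le \|u^*-\Pi^*_h u^*\|_{DG}+\|\Pi^*_h u^*-u_h^*\|_{DG}$, where $\Pi^*_h u^*$ is the patch-wise quasi-interpolant. The first term is controlled directly by the interpolation estimate in Lemma \ref{lemma7}, giving a bound of the order $h^s\sum_i\|u^*\|_{H^{\ell}(\Omega_i^*)}$. For the second term, set $w_h:=\Pi^*_h u^*-u_h^*\in V_{\mathbb{B}}^*$ and use the ellipticity half of Lemma \ref{lemma_0} to get $C_m\|w_h\|^2_{DG}\le A_h(w_h,w_h)$. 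Then I would insert $\pm u^*$ and invoke Galerkin orthogonality to obtain
\begin{equation*}
C_m\|w_h\|^2_{DG} \le A_h(\Pi^*_h u^*-u^*,w_h) - R_{\Omega_{o21}}(u^*,w_h).
\end{equation*}

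The first summand on the right is bounded via the sharpened continuity estimate of Lemma \ref{lemma00} (which is formulated precisely for pairs in $V_h^*\times V_{\mathbb{B}}^*$, so $\Pi^*_h u^*-u^*$ is an admissible first argument), combined with Lemma \ref{lemma7}; this yields a factor $h^s\sum_i\|u^*\|_{H^{\ell}(\Omega_i^*)}\|w_h\|_{DG}$. The second summand is estimated by the second inequality of Lemma \ref{lemma4}, giving $C_2\mathcal{K}_o(u^*)\|w_h\|_{DG}h^{\lambda-1/2}=C_2\mathcal{K}_o(u^*)\|w_h\|_{DG}h^{\beta}$. Dividing by $\|w_h\|_{DG}$ and combining the two powers of $h$ into $h^{\min(s,\beta)}=h^r$ produces
\begin{equation*}
\|\Pi^*_h u^*-u_h^*\|_{DG} \lesssim h^r\sum_{i=1}^2\|u^*\|_{H^{\ell}(\Omega_i^*)},
\end{equation*}
after absorbing $\mathcal{K}_o(u^*)$ into the full Sobolev norm via the trace theorem under Assumption \ref{Appr_in_omega_o}. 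Adding the two contributions and choosing the dominant power finishes the proof.

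The routine pieces are the interpolation bound and the application of coercivity/continuity; the delicate step, and the one I would expect to be the main obstacle, is the handling of the consistency term $R_{\Omega_{o21}}(u^*,w_h)$. Here one must be sure that the Taylor-remainder and overlap-width factors produce exactly the $h^{\lambda-1/2}$ scaling claimed in Lemma \ref{lemma4}, since the factor $\{\rho\}/h$ in front of the residuals in \eqref{Var_Form_Residuals} interacts nontrivially with the $|r_{oij}|$ factors ($\sim d_o$) and with the trace-type inverse inequalities used when testing with the discrete function $w_h$. Once this scaling is secured and balanced against the interpolation rate $h^s$, the Strang argument closes cleanly and yields the stated convergence rate $r=\min(s,\beta)$.
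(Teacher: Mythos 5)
Your proposal is correct and follows essentially the same route as the paper: ellipticity of $A_h$, the consistency relation $A_h(u^*,\phi_h)+R_{\Omega_{o21}}(u^*,\phi_h)=F_h(\phi_h)$ from Remark \ref{Cons_dgIgA_for_u}, the boundedness estimate of Lemma \ref{lemma00}, the residual bound of Lemma \ref{lemma4}, and the interpolation estimate of Lemma \ref{lemma7}, combined by the triangle inequality with $z_h=\Pi^*_h u^*$. Your explicit remark that $\mathcal{K}_o(u^*)$ must be absorbed into $\sum_i\|u^*\|_{H^{\ell}(\Omega_i^*)}$ via the trace theorem is a detail the paper leaves implicit, but otherwise the arguments coincide.
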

\begin{proof}
Let $z_h\in V_{\mathbb{B}}^*$. We set $u^{*}_{h}-z_h=\phi_h$. The properties 
(\ref{B_dG_bound}),  (\ref{7_d7a}) 
of $A_h(\cdot,\cdot)$ and (\ref{Cons_dgIgA_for_u_1}) imply 
\begin{align}\label{4.5_eA}
	\begin{split}
	c_m&\|u^{*}_{h}-z_h\|^2_{DG} \leq 	 A_h(u^{*}_{h}-z_h,\phi_h)=
           A_h(u^*,\phi_h)+	R_{\Omega_{o21}}(u^*,\phi_h)
	  - A_h(z_h,\phi_h)\\         
=& A_h(u^*-z_h,\phi_h)+R_{\Omega_{o21}}(u^*,\phi_h) \\
\leq & { C_*} \Big( \big(\|u^*-z_h\|^2_{DG} + \sum_{i=1}^N
   h\,\|\rho_i^{\frac{1}{2}} \nabla \big(u^*-z_h\big)\|^2_{L^2(\partial \Omega^*_{i})}\big)^{\frac{1}{2}} \Big)
   \|\phi_h\|_{DG} \\
   +&C_{2} \mathcal{K}_{o}(u^*)
  \|\phi_h\|_{DG}\,  h^{\lambda-0.5},
	\end{split}
\end{align}
where the bound (\ref{7_i0_1}) has been used previously. 
Setting in (\ref{4.5_eA}) $z_h=\Pi^{*}_h u^*$, and then  
using the triangle inequality $c_m\|u^{*}_{h}-u^*\|_{DG} \leq c_m\|u^{*}_{h}-\Pi^{*}_h u^*\|_{DG} +
                               c_m\|u^{*}-\Pi^{*}_h u^*\|_{DG}$ together with the estimate in  (\ref{4.5.d_1}), we 
                               derive (\ref{4.5_e}).
  $\BLACKBOX$  
\end{proof}   
\subsubsection{Main error estimate}
	The estimate given in (\ref{4.5_e}) concerns the distance between the DG-IGA solution 
	$u_{h}^{*}\in V_{\mathbb{B}}$ and the solution
	$u^*\in H^{\ell}(\cal{T}^*_{H}(\Omega))$  of
	the  problems  in (\ref{Artf_Vart_Problm}).  Below we give an estimate between 
	the solution $u$ of (\ref{4a}) and the DG-IGA solution $u_h^*$. 
	In the proof of this result we need the following interpolation estimate for $v\in V$
		\begin{align}\label{Apend_Eq1_A}
			\Big(\|v-\Pi^{*}_h v\|^2_{DG} +\sum_{i=1}^2h\| \rho_i^{\frac{1}{2}} 
			\nabla (v-\Pi^{*}_{i,h} v)\|^2_{L^2(\partial \Omega_i^*)}\Big)^{\frac{1}{2}}
  	  	& \leq   \sum_{i=1}^2 C_i h^{s}\|v\|_{H^{{\color{black}\ell}}(\Omega_i^*)}, 
  \end{align}
  where the quasi-interpolant $\Pi_h^*v=(\Pi_{1,h}v, \Pi_{2,h}v)$ is defined in (\ref{Intep_Est_1}) and  $s=\min(\ell-1,p)$.
The proof of (\ref{Apend_Eq1_A}) is provided in the Appendix. 
\textcolor{black}{
\begin{theorem}[main error estimate]\label{mainTheormErrEstm}
	 Let  $u$ be the solution of (\ref{4a}) and let  Assumption \ref{Assumption1} with $\ell \geq 2$.  We suppose further that $d_o=h^\lambda,\,\lambda \geq 1$ is the width of $\Omega_{o21}$. 
	 The following error estimate holds
	\begin{align}\label{mainErrEstm_0}
	\|u-u^*_{h}\|_{DG} \leq \widetilde{C}\big(h^{s} \sum_{i=1}^2
	\big(\|u\|_{{H}^{\ell}(\Omega_i^*)}+\|u^*_i\|_{{H}^{\ell}(\Omega_i^*)})
	       +d_o\|f\|_{L^2(\Omega)} +h^{\beta}\big(\mathcal{K}_o(u)+\mathcal{K}_o(u^*)\big)\Big),	
	\end{align}
where $\beta=\lambda -\frac{1}{2}$,  $s=\min(\ell-1,p)$, the constant $\widetilde{C}$ 
depends on the constants  in (\ref{4.5.d_1}),  (\ref{7_d7a}) and (\ref{B_dG_bound}), and 
 $\mathcal{K}_o$ has the form given in  Lemma  \ref{lemma4}. 
\end{theorem}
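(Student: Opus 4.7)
The plan is to decompose the error via the triangle inequality
\[
\|u - u_h^*\|_{DG} \le \|u - u^*\|_{DG} + \|u^* - u_h^*\|_{DG},
\]
which is precisely the splitting anticipated throughout Section~\ref{section_3}. The first summand is the \emph{consistency error} between the true solution $u$ of~(\ref{4a}) and the pair $u^*=(u_1^*,u_2^*)$ solving the auxiliary problems~(\ref{Artf_Vart_Problm}); the second is the \emph{discretization error} of the DG--IGA scheme relative to $u^*$.

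For the discretization term, I would re-run the proof of Theorem~\ref{Theorem_1_estimates}, but without collapsing the interpolation and residual contributions into a single $h^{\min(s,\beta)}$. Concretely, set $\phi_h = u_h^* - \Pi_h^* u^*$ and use coercivity of $A_h$ (Lemma~\ref{lemma_0}) together with the identity (\ref{Cons_dgIgA_for_u_1}) applied to $u^*$ to get
\[
c_m \|\phi_h\|_{DG}^2 \le A_h(u^*-\Pi_h^* u^*,\phi_h) + R_{\Omega_{o21}}(u^*,\phi_h).
\]
The first term on the right is majorized by $C h^{s}\sum_{i=1}^2 \|u^*\|_{H^{\ell}(\Omega_i^*)}\|\phi_h\|_{DG}$ via Lemma~\ref{lemma00} combined with the interpolation bound~(\ref{4.5.d_1}); the residual term is majorized by $C h^{\beta}\mathcal{K}_o(u^*)\|\phi_h\|_{DG}$ by Lemma~\ref{lemma4}. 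A final triangle inequality against $\|u^* - \Pi_h^* u^*\|_{DG}$ yields
\[
\|u^* - u_h^*\|_{DG} \lesssim h^{s}\sum_{i=1}^2 \|u^*\|_{H^{\ell}(\Omega_i^*)} + h^{\beta}\mathcal{K}_o(u^*).
\]

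For the consistency term $\|u - u^*\|_{DG}$ I would invoke the estimate established in Subsection~\ref{Apend_u_minus_u*}. The key observations that drive that estimate are already in place: by Remark~\ref{NonConsistenceArt}, $u$ solves (P1V) exactly, so the defect is localized to $\Omega_2^*$ and, via~(\ref{resd_form_1}), is driven by the localized source $\tfrac{\rho_1-\rho_2}{\rho_1}f$ on the overlap $\Omega_{o21}$. Combined with the scaled $L^2$-in-overlap bound from Proposition~\ref{Propos_PoincareOmega_o} and Corollary~\ref{fphi_hOmega_o}, this is expected to produce a bound of the form $\|u-u^*\|_{DG} \lesssim d_o\|f\|_{L^2(\Omega)} + h^{s}\sum_{i=1}^2 \|u\|_{H^{\ell}(\Omega_i^*)}$, which in particular explains the appearance of $\|u\|_{H^{\ell}}$ \emph{and} $\|u^*\|_{H^{\ell}}$ in~(\ref{mainErrEstm_0}).

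Adding the two contributions and absorbing constants into $\widetilde{C}$ produces~(\ref{mainErrEstm_0}). I expect the consistency piece $\|u-u^*\|_{DG}$ to be the main obstacle: unlike the DG--IGA error, there is no Galerkin orthogonality to exploit, so one must work directly with the difference of the variational problems, localize the defect to $\Omega_{o21}$ via $a_{res}$, and control the induced boundary/trace terms on $F_{o12},F_{o21}$ through a width-scaled trace inequality using $d_o\le Ch^\lambda$. The replacement of $\|f\|_{L^\infty(\Omega_{o21})}$ (as appears in Corollary~\ref{fphi_hOmega_o}) by the global $\|f\|_{L^2(\Omega)}$ in the final statement is achieved in the appendix by an energy argument rather than a pointwise one.
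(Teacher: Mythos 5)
Your overall architecture---the triangle inequality $\|u-u_h^*\|_{DG}\le\|u-u^*\|_{DG}+\|u^*-u_h^*\|_{DG}$---is the splitting the paper \emph{advertises} in Section~\ref{section_3}, but it is not the route the actual proof of Theorem~\ref{mainTheormErrEstm} takes, and the step where you lean on it hardest is exactly where it breaks. The discretization half of your argument is fine: re-running Theorem~\ref{Theorem_1_estimates} with the two rates kept separate gives $\|u^*-u_h^*\|_{DG}\lesssim h^s\sum_i\|u^*\|_{H^\ell(\Omega_i^*)}+h^{\beta}\mathcal{K}_o(u^*)$. The gap is in the consistency half. The only bound the paper provides for $u-u^*$ is the appendix estimate
$\|\rho\nabla(u_2^*-u)\|_{L^2(\Omega_2^*)}\le c\,d_o^{1/2}\big(\|f\|_{L^2(\Omega_{o21})}+\|\rho_2\nabla u_2^*\|_{L^2(F_{o12})}\big)$,
which scales like $d_o^{1/2}$, not $d_o$, controls only the gradient seminorm (not the $h^{-1}$-weighted interface jumps in $\|\cdot\|_{DG}$), and carries the trace term $\|\rho_2\nabla u_2^*\|_{L^2(F_{o12})}$ that does not appear in (\ref{mainErrEstm_0}). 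With $d_o=h^{\lambda}$ this yields only $h^{\lambda/2}$, which for $\lambda=p+\tfrac12$ is far below the optimal $h^p$; so the route through $\|u-u^*\|_{DG}$ cannot reproduce the stated estimate with the tools available. (Also, the $h^s\|u\|_{H^\ell}$ terms you attribute to the consistency estimate do not belong there; in the theorem they come from interpolating $u$ itself.)

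The paper's proof avoids estimating $u-u^*$ altogether. It sets $\phi_h=u_h^*-z_h$, uses coercivity, and then exploits the chain of identities in Remark~\ref{Cons_dgIgA_for_u} to rewrite everything in terms of $A_h(u_h^*-\Pi_h^*u^*,\phi_h)+A_h(u^*-\Pi_h^*u^*,\phi_h)+A_h(u-z_h,\phi_h)-a_{res}(u,\phi_{2,h})+R_{\Omega_{o21}}(u,\phi_h)-R_{\Omega_{o21}}(u^*,\phi_h)$, finally choosing $z_h=\Pi_h^*u$. The non-consistency thus enters \emph{only} as $a_{res}(u,\phi_{2,h})=\int_{\Omega_{o21}}\frac{(\rho_2-\rho_1)}{\rho_1}f\,\phi_{2,h}\,dx$, tested against a \emph{discrete} function; Proposition~\ref{Propos_PoincareOmega_o} then supplies the thin-domain bound $\|\phi_{2,h}\|^2_{L^2(\Omega_{o21})}\lesssim d_o h\,\|\phi_{2,h}\|_{DG}^2$-type control, and combining the two $d_o^{1/2}$-factors (one from the smallness of $\Omega_{o21}$ in $\|f\|_{L^2(\Omega_{o21})}$, one from the test function) produces the full power $d_o$ in Corollary~\ref{fphi_hOmega_o}. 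That pairing against $\phi_{2,h}$ is the missing idea in your proposal: the $d_o$ rate is a property of the residual functional on the discrete space, not of the distance $u-u^*$ itself. To salvage your plan you would need an independent, genuinely stronger consistency estimate $\|u-u^*\|_{DG}\lesssim d_o\|f\|$, which the paper neither proves nor needs.
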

\begin{proof} 
          Let $z_h\in V^*_{\mathbb{B}}$  and let $\phi_h=u_h^{*}-z_h$.
          By the definition of the discrete DG-IGA scheme in (\ref{7_d4}),  the properties of $A_h(\cdot,\cdot)$ and the Remark \ref{Cons_dgIgA_for_u}  we have
         \begin{align}\label{mainErrEstm_1}
          \begin{split}
          c_m&\|u_h^*-z_h\|^2_{DG} \leq           A_h(u^{*}_{h} - z_h,\phi_h) - A_h(u^*,\phi_h) - R_{\Omega_{o21}}(u^*,\phi_h) + F_h(\phi_h)
          -A_h(\Pi_h^*u^*,\phi_h)+A_h(\Pi_h^*u^*,\phi_h)\\ 
          =\,&
          A_h(u_h^*-\Pi_h^* u^*,\phi_h) +  A_h(u^*-\Pi_h^* u^*,\phi_h) \\
          +\,&  A_h(-z_h,\phi_h ) + 
          A_h(u,\phi_h) -a_{res}(u,\phi_{2,h})+R_{\Omega_{o21}}(u,\phi_h)-R_{\Omega_{o21}}(u^*,\phi_h)\\
          =\, &  A_h(u_h^*-\Pi_h^* u^*,\phi_h) +  A_h(u^*-\Pi_h^* u^*,\phi_h)+ A_h(u-z_h,\phi_h)\\
          -\,& a_{res}(u,\phi_{2,h})+R_{\Omega_{o21}}(u,\phi_h)-R_{\Omega_{o21}}(u^*,\phi_h) \\
\leq   \, & { C_M}  \|u^*_h-\Pi_h^*u^*\|_{DG}\|\phi_h\|_{DG}  
\hskip 6cm \qquad \qquad   
by\, {(\ref{7_i0_1}),(\ref{B_dG_bound}), (\ref{7_d7a}), (\ref{fphi_hOmega_o1})} \\
   +\,& { C_*} \Big( \big(\|u^*-\Pi_h^*u^*\|^2_{DG} + \sum_{i=1}^N
   h\,\|\rho_i^{\frac{1}{2}} \nabla \big(u^*-\Pi_h^*u^*\big)\|^2_{L^2(\partial \Omega^*_{i})}\big)^{\frac{1}{2}} \Big)
   \|\phi_h\|_{DG} \\
 + \,& { C_*} \Big( \big(\|u-z_h\|^2_{DG} + \sum_{i=1}^N
   h\,\|\rho_i^{\frac{1}{2}} \nabla \big(u-z_h\big)\|^2_{L^2(\partial \Omega^*_{i})}\big)^{\frac{1}{2}} \Big)
   \|\phi_h\|_{DG} \\
 +\,& c_2d_o\|f\|_{L^2(\Omega)}\|\phi_h\|_{DG} +C_{2} \big(\mathcal{K}_{o}(u^*) +\mathcal{K}_{o}(u)\big)
  \|\phi_h\|_{DG}\,  h^{\beta}
                      \end{split}
  \end{align}
   Setting  $z_h=\Pi^{*}_h u$ into (\ref{mainErrEstm_1}),  using  (\ref{4.5_eA}), (\ref{4.5.d_1}), 
   and (\ref{Apend_Eq1_A}) and gathering together the similar terms
   we deduce that
   \begin{align}\label{mainErrEstm_2}
          \begin{split}
          c_m\|u_h^*-\Pi^{*}_h u\|_{DG} \leq & 
   \sum_{i=1}^2 C_i h^{s}\|u\|_{H^{{\color{black}\ell}}(\Omega_i^*)} +
   \sum_{i=1}^2 C_i h^{s}\|u^*_i\|_{H^{{\color{black}\ell}}(\Omega_i^*)} \\
   + &
   c_2d_o\|f\|_{L^2(\Omega)} +C_{2} \big(\mathcal{K}_{o}(u^*) +\mathcal{K}_{o}(u)\big)\,  h^{\beta}
   \end{split}
  \end{align}
 Applying the triangle inequality 
\begin{align}\label{mainErrEstm_3}
\|u-u^*_{h}\|_{DG} \leq \|u-\Pi^{*}_h u\|_{DG}+\|\Pi^{*}_h u-u^*_{h}\|_{DG},
\end{align}
the desired estimate follows.
\BLACKBOX
\end{proof}
}
\section{Implementation and Numerical tests}
\label{Section_numerics}
\subsection{Implementation remarks}
\label{Implementation}
\textcolor{black}{
In this paragraph  we focus on the implementation of the proposed scheme for both two and 
three dimensional problems. For simplicity of the presentation we first discuss the case of having two patches.
 Afterwards, we explain how the same ideas  can be generalized to the multipatch case.
 \\
 \begin{figure}
 \begin{subfigmatrix}{2}
  \subfigure[]{\includegraphics[clip,trim={0.5cm 3cm 2.5cm 3cm},width=.45\textwidth]{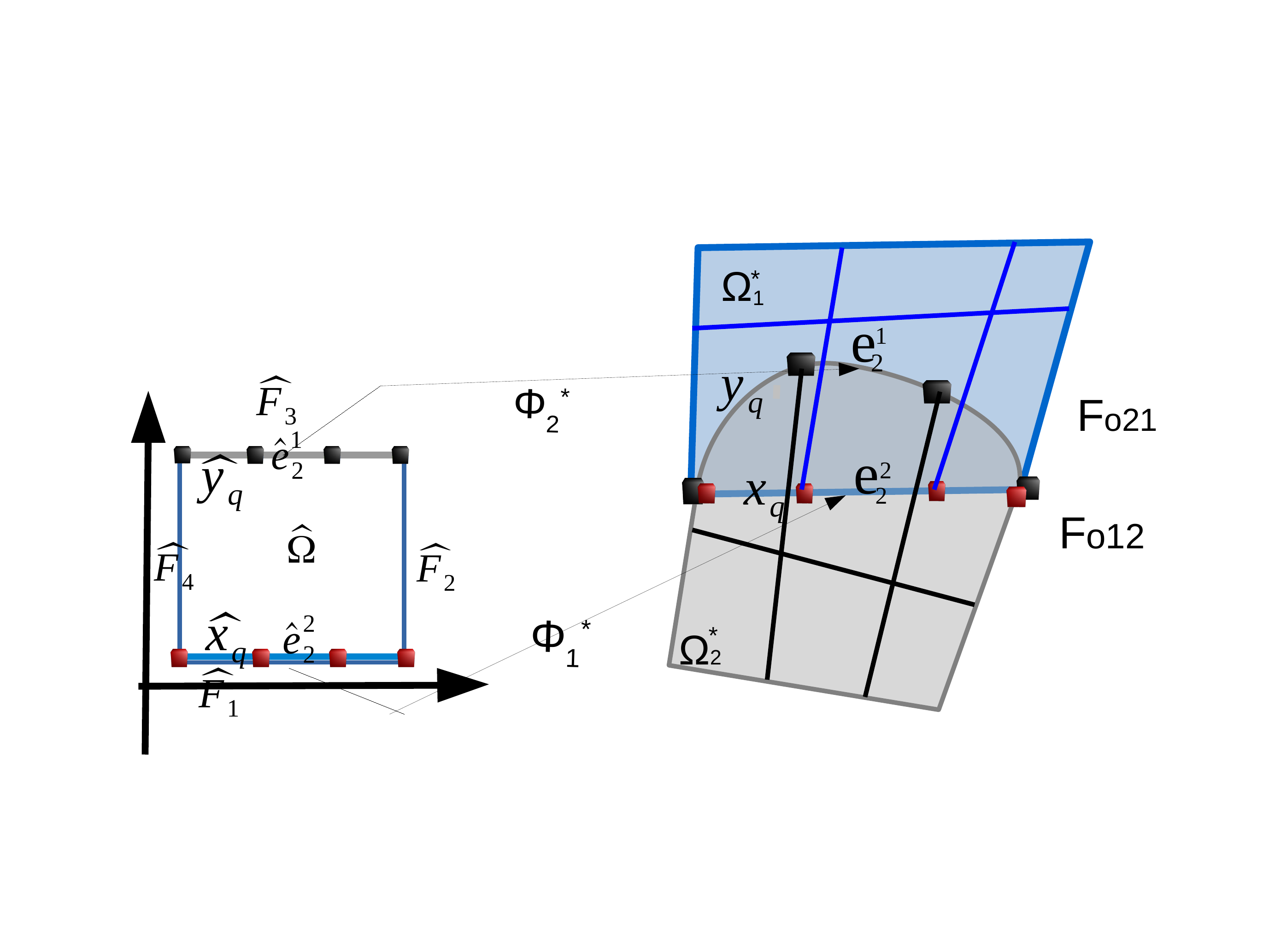}}
  \subfigure[]{\includegraphics[trim={0cm 3cm 2.5cm 3cm},width=.4\textwidth]{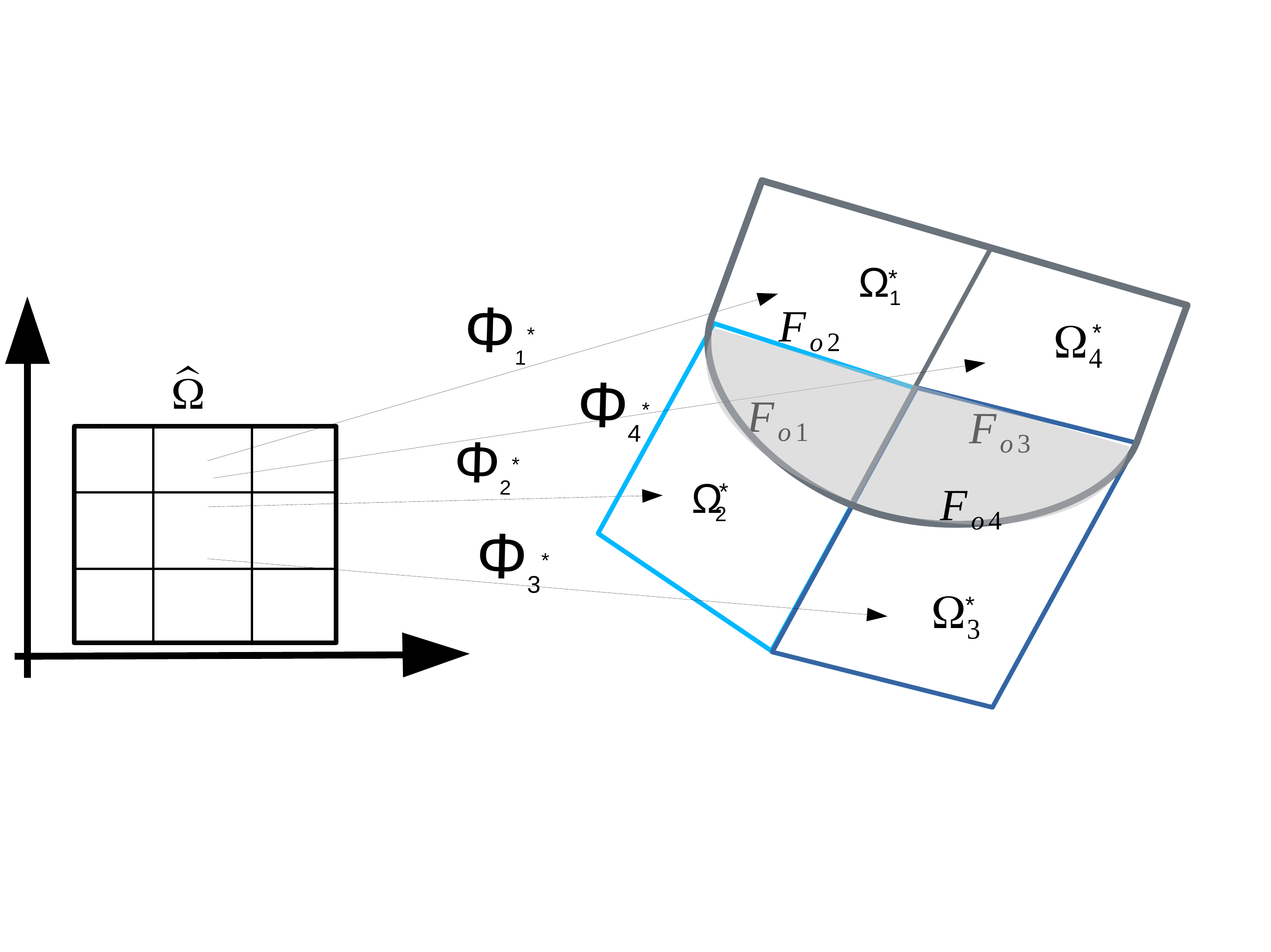}}
\end{subfigmatrix}
 \caption{ (a) Configuration of the faces and the edges on $\partial \Omega_{o12}$ and their
 corresponding edges on $\partial \widehat{\Omega}$ which are used to compute the interface integrals,  (b) an example of an overlapping region with more than two faces. The relative edges on the opposite faces must again much.   
         }
 \label{Fig_imag_NonMatcInterf_2}
\end{figure}
 Initially, we  consider interfaces with matching meshes, i.e., the number of edge elements 
 on $F_{o21}$ is the same as the number on $F_{o12}$, as shown in Fig. \ref{Fig_imag_NonMatcInterf_2}. 
 \\
 For the computation of the numerical flux terms of the 
  DG-IGA scheme given  in (\ref{Var_Form_Gap_Overl}), a Gauss quadrature rule
  is applied on every edge.
 The first term of the numerical flux can be directly computed by using the Gauss rule 
  and the related Jacobian term. For the computation of the jump terms we must know the 
  diametrically opposite  edge  and the associated quadrature point that are located on the
  other interface. We could proceed to this direction by constructing and using the mappings 
  $\mathbf{\Phi}_{o21}$ and $\mathbf{\Phi}_{o12}$ given in (\ref{parmatric_lro}) and (\ref{parmatric_rlo}) respectively. 
  For the practical implementation, it would be preferable to proceed without the construction of these mappings.
 \\
 We first  assign the edges belonging to $F_{o21}$ to the edges belonging to $F_{o12}$, for the example given 
 in Fig. \ref{Fig_imag_NonMatcInterf_2}(a), the edge $e_2^1$ of $F_{o21}$ is assigned to $e_2^2$ of $F_{o12}$.  
  In Fig. \ref{Fig_imag_NonMatcInterf_2}(a) the Gauss point are denoted by 
  $x_q$ and $y_q$ correspondingly. 
   The edge  $e_2^1$ is the image of the 
  edge $\hat{e}_2^1$ under the parametrization $\mathbf{\Phi}_2^*$, and also
  the edge $e_2^2$ is the image of the 
  edge $\hat{e}_2^2$ under the parametrization $\mathbf{\Phi}_1^*$. Hence, the Gauss rule 
   is transformed back to boundary edges of the parametric domain, and for every Gauss point $\hat{y}_q$ there is always 
   a corresponding Gauss point  on the other associated edge to perform the numerical integration. 
For the configuration given in  Fig.  \ref{Fig_imag_NonMatcInterf_2}(a), the other associated edge is located on face $\hat{F}_1$ and the corresponding Gauss point is denoted by $\hat{x}_q$. 
   Thus, having defined the quadrature points on the boundary
  edges of $\widehat{\Omega}$, we can compute the interface terms of the   numerical flux
  of the DG-IGA scheme. 
  \\
    Note that the above approach is quite simple
    and it follows the same ideas that we use for computing the numerical fluxes 
  in the case   of matching parametrized interfaces.
  It can be also applied for the case of having
  gap regions between the patches. The advantage of implementing  this approach is that we can
  develop a flexible DG-IGA  code which can treat patch unions  with matching and nonmatching interfaces in a similar way. Note also that the previous approach can be easily combined with the 
   adaptive  numerical  quadrature methods presented in \cite{Agns_Juttler_CurvSurf2016}, 
   in order to discretize  the problem using  non-matching structured meshes on the overlapping faces. 
  \\
  Overlapping regions with boundary consisting of more than two faces are shown in Fig. \ref{Fig_imag_NonMatcInterf_2}(b).   We consider again the case where the maximum number of the overlapping patches is two.  For the  example shown in Fig. \ref{Fig_imag_NonMatcInterf_2}(b) the domain has four  patches and the boundary of the overlapping region  is compromised of the four faces $F_{oi},\,i=1,\ldots,4$.   
  Anyway, the evaluation
  of the interface numerical fluxes in this case  needs   more work. We first find the faces that form the 
  boundary of the overlapping regions. Then between these faces, we determine those that are diametrically opposite, and 
  we continue  following the procedure described in the previous paragraph. This type of overlapping regions are discussed in the numerical Example 3.
   \\
   It is clear that through a segmentation and parametrization procedure, overlapping regions with 
   more  complicated shapes  than the shapes in the examples shown here can exist,  e.g., more than two overlapping patches, T-joint faces on the  boundary, see, e.g., \cite{HLT:PauleyNguyenMayerSpehWeegerJuettler:2015a}.  In an ongoing work we are extending the present methodology to treat these cases. We also  are  constructing  domain-decomposition methods, \cite{HLT:HoferLanger:2016a},  on these type of multipatch representations and we are discussing the influence of the size    of the overlapping region on the performance of the proposed methods. The first results of this work are included in  \cite{Report_Hofer_Langer_ToulopoulosDGIGA_GapOver2016}.  \\
   Finally, we mention that  during the investigation  of the proposed methodology in Section 3, we considered simple interior penalty fluxes on $\partial \Omega_{o21}$.  
   For the  performance of the numerical examples below, we have implemented the corresponding symmetric numerical fluxes, i.e., 
   $\textstyle{-\int_{F_{o12}} \frac{1}{2}\big(\rho_1 \nabla u_{1,h} +\rho_2 \nabla u_{2,h}(\mathbf{\Phi}_{o12}) \big)
   \cdot n_{F_{o12}} \phi_{1,h} +\frac{\eta\{\rho\}}{h} \Big(u_{1,h} -u_{2,h}(\mathbf{\Phi}_{o12}) \big) \phi_{1,h}\,ds.} $, see \cite{LT:LangerToulopoulos:2014a}, \cite{HoferLangerToulopoulos_2016_SISC}.
   }
\subsection{Numerical Examples}
In this section, we perform several numerical tests  with different shapes of  overlapping regions 
as well as combinations with non-homogeneous diffusion coefficients for two- and three- dimensional problems. 
We investigate  the order of accuracy of the DG-IGA scheme proposed in (\ref{7_d5}).  All examples have been performed using  second degree ($p=2$)  B-spline spaces. 
We present the asymptotic behavior of  the error convergence rates  for  widths $d_o=h^\lambda$  
 with  $\lambda \in \{1,2,2.5,3\}$. Every example has been solved
 applying several mesh refinement steps with $\ldots,h_i,h_{i+1},\ldots,$ satisfying Assumption \ref{Assumption2}.
 The numerical convergence rates $r$ have been  computed by the ratio 
 $r =\textstyle{\frac{\ln(e_i/e_{i+1})}{\ln (h_i /h_{i+1} )}}, \,i=1,2,\ldots$, where the error $e_i:=\|u-u^*_h\|_{DG}$ is always computed  
on the  meshes $\cup_{i=1}^2 T^{(i)}_{h_i,\Omega_i^*}$.
We mention that, in the test cases, we use highly smooth solutions in each patch, i.e., $p+1 \leq \ell$, 
and therefore the order  $s$ in  (\ref{4.5_e}) and (\ref{mainErrEstm_0}) becomes $s = p$.
    The predicted values of  power  $\beta$, the order $s$ and the expected convergence rate  $r$, for 
    several values of $\lambda$,  are displayed in Table \ref{table_value_r}. 
    In any test case, the   overlap regions  are artificially created by moving 
the control points, which are related to the interfaces $F_{ij}$, 
in the direction of $n_{F_{ij}}$ or of $-n_{F_{ij}}$.  
\par
All tests have been performed in G+SMO \cite{gismoweb},
which is a generic object-oriented C++ library for IGA computations,
\cite{HLT:JuettlerLangerMantzaflarisMooreZulehner:2014a,LangerMantzaflarisMooreToulopoulos_IGAA_2014a}. 
In Section 3, we developed and provided a rigorous analysis for the DG-IGA method (\ref{7_d4}) which includes a 
 non-symmetric numerical flux. 
In the materialization of the method, we utilized the associated symmetrized version the numerical flux, \cite{Rivierebook}.
For solving the resulting linear system, we use the DG-IETI-DP method presented  in  \cite{HLT:HoferLanger:2016a}, see also \cite{HLT:Hofer:2018a} for an analysis of the method and \cite{HLT:Hofer:2017a} for results on parallel scalability. 
\par
{Although in the analysis, we consider  meshes with similar quasi-uniform patch-wise properties,
it is known that the introduction of DG techniques
on the subdomain interfaces makes the use of non-matching and non-uniform meshes  easier, see \cite{LT:LangerToulopoulos:2014a}.
Keeping a constant linear relation between the sizes of the different patch meshes, 
the approximation properties
of the method are not affected,  \cite{LT:LangerToulopoulos:2014a}. 
In the examples  below, we exploit this advantage of the DG methods and 
first solve two-dimensional problems considering
non-matching meshes.  The   convergence rates are expected  to be the same as those displayed in Table \ref{table_value_r}. 

\begin{table}
  \centering
  \begin{tabular}{|c||c|c|c|c|}
 \hline
  & \multicolumn{4}{|c|}{B-spline degree $p$   }\\ \hline  
  & \multicolumn{4}{|c|}{Smooth solutions, $u\in H^{\ell\geq p+1}$}  \\ \hline
   $d_o=h^\lambda$       &$\lambda=1$& $\lambda=2$ &$\lambda=2.5$ &$\lambda=3$\\\hline
  $\beta:=$              &0.5        &  1.5        & 2                    &2.5\\   \hline
  $s:=$                  &$p$        &  $p$        & $p$                   &$p$\\   \hline
  $r:=$                  &$0.5$      &  $1.5$      & $\min(p,\beta)$       &$\min(p,\beta)$\\ 
 \hline
\end{tabular}
\caption{The values of the expected rates $r$ as they result from  estimate (\ref{mainErrEstm_0}). }
\label{table_value_r}
\end{table}
\subsection{Two-dimensional numerical examples}
The control points with the corresponding knot vectors of the domains given in Example 1-3 are available under the names \verb+yeti_mp2+, \verb+12pSquare+ and \verb+bumper+ as \verb+.xml+ files in 
G+SMO\footnote{G+SMO: https://www.gs.jku.at/trac/gismo}.

\paragraph{Example  1: uniform diffusion coefficient $\rho_i=1,\, i=1,\ldots,N$.} 
The first numerical example is a simple test case demonstrating
the  applicability of the proposed technique for constructing the 
DG-IGA scheme on segmentations 
including   overlaps  with general shape. 
The domain  $\Omega$ with the $N=21$ subdomains $\Omega_i^*$ 
 and the initial mesh  are shown in Fig.~\ref{Fig1_Test_1Gaps}(a). We note that 
 we consider non-matching meshes across the interior  interfaces. 
The Dirichlet boundary condition and the right hand side $f$ are determined by the exact solution 
$u(x,y)=\sin(\pi(x+0.4)/6)\sin(\pi(y+0.3)/3)+x+y $. 
 In this example, we consider the homogeneous diffusion case, i.e., 
$\rho_i=1$ for all  $\Omega_i^*,\,i=1,\ldots,N$.
\par
 We performed {four} groups of  computations, where for every group 
 the   maximum size of  $d_o$ was defined to be $\mathcal{O}(h^\lambda)$, 
 with $\lambda \in \{1,2, 2.5,3\}$. 
 In Fig.~\ref{Fig1_Test_1Gaps}(b) we present the discrete solution for $d_0 = h$.
 Since we are using second-order ($p=2$) B-spline space, based on Table 
 \ref{table_value_r}, we expect optimal convergence rates for $\lambda =2.5$ and $\lambda = 3$.  
 The numerical convergence rates for several levels of mesh refinement are plotted in Fig.~\ref{Fig1_Test_1Gaps}(c). They are in very good	agreement with the theoretically predicted estimates given in Theorem \ref{mainTheormErrEstm}, 
 see also Table \ref{table_value_r}. We observe that we have optimal rates $r$ for the cases where $\lambda \geq 2.5$ and
 sub-optimal for the rest values of $\lambda$.
   \begin{figure}[h]
   \begin{subfigmatrix}{4}
 \subfigure[]{\includegraphics[width=4.0cm, height=5.25cm]{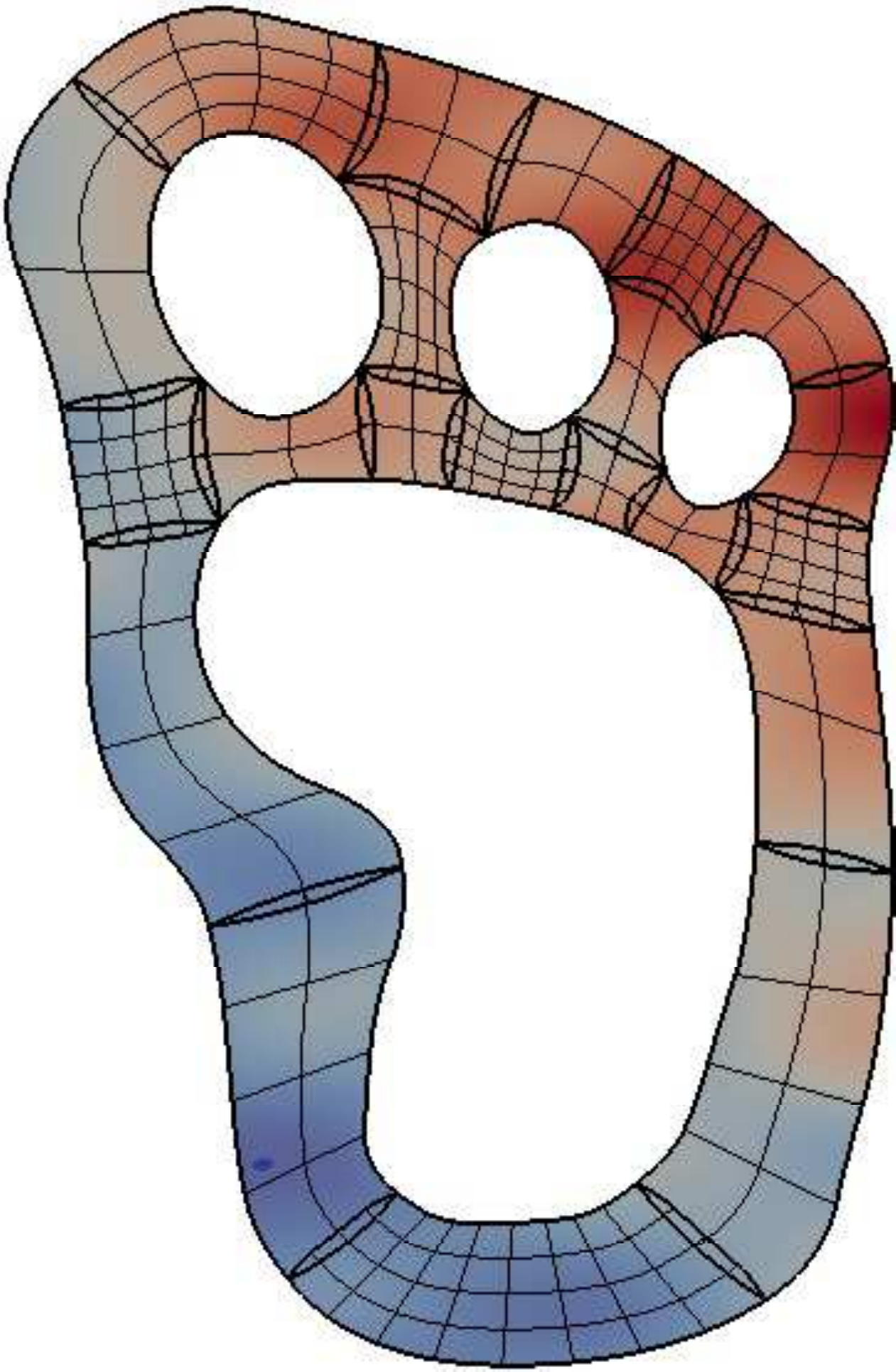}}
 \subfigure[]{\includegraphics[width=4.0cm, height=5.25cm]{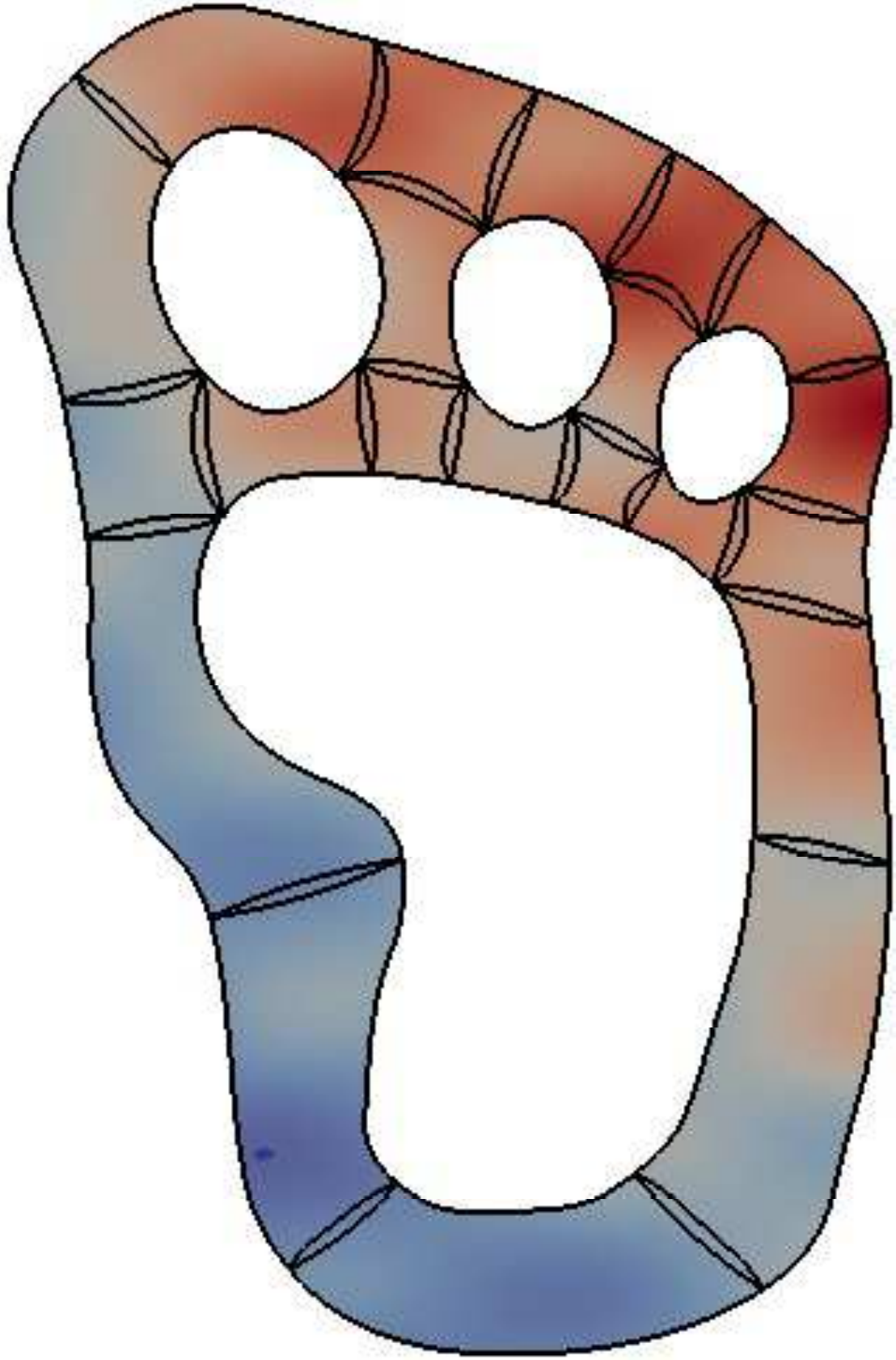}}
 \subfigure[]{{\includegraphics[width=4.25cm, height=4.5cm]{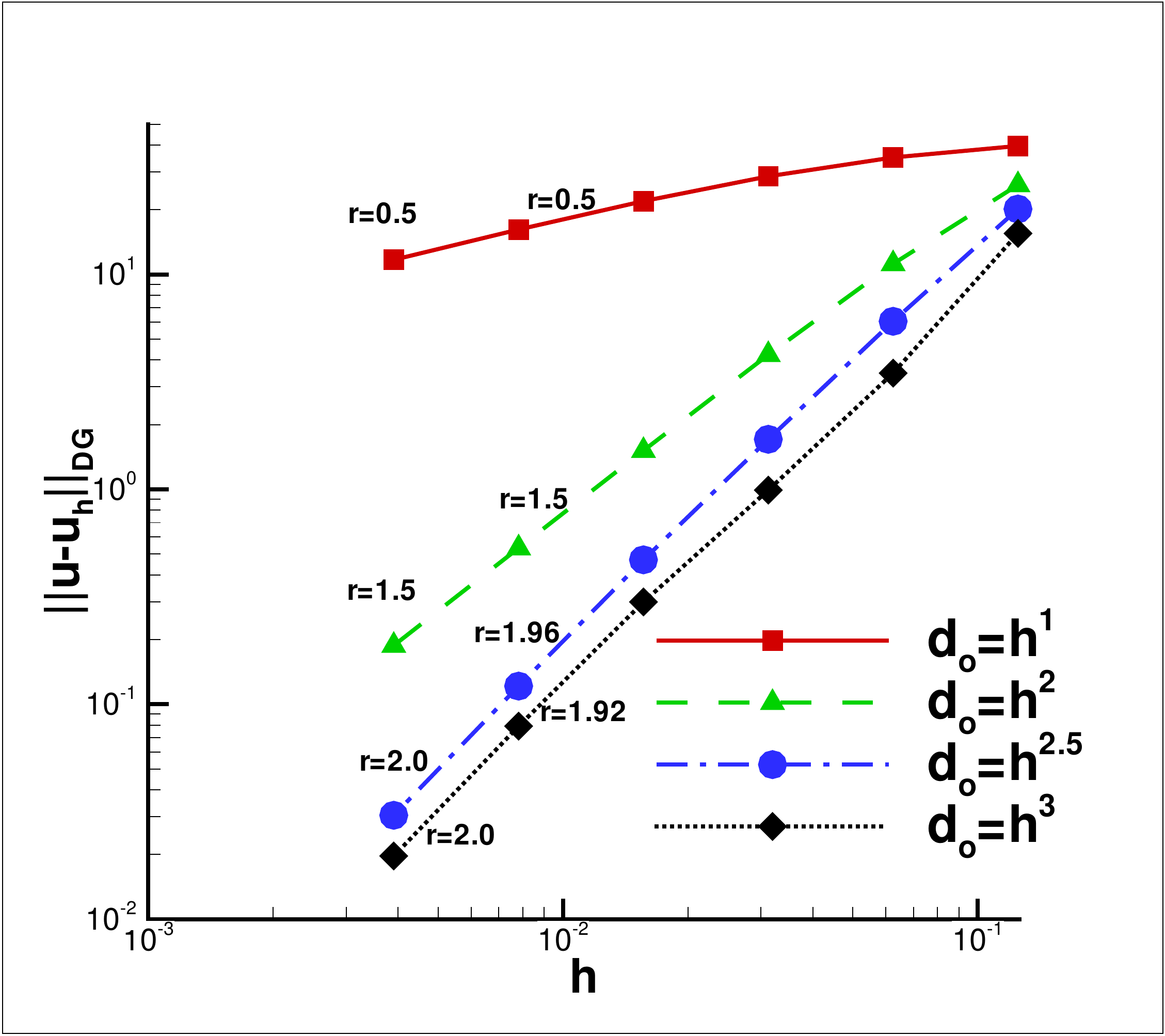}}}
  \end{subfigmatrix}
   \caption{Example 1: (a) The patches $\Omega_i^*$ with the initial non-matching  meshes and the contours of the exact solution. 
    (b) The contours of the $u^*_h$ solution for $d_o = h$. 
    (c)    The convergence rates for the different values of $\lambda$.
   }
   \label{Fig1_Test_1Gaps}
 \end{figure}
 \paragraph{Example  2: different diffusion coefficients $\rho_1\neq \rho_2$.}  
 In the second example,  we consider a rectangular domain
 $\Omega$, that is  described as a union of  $N=12$ patches, see Fig.~\ref{Fig2_Test_2Gaps}(a).
 Here,  we study the case of having smooth solutions in each $\Omega_i^*$
 but discontinuous coefficient,  i.e.,  we set $\rho_i = 3\pi/2$ for the patches belonging to
 half plane $x\leq 0$ and we set $\rho_i= 2$ for the rest patches  
 according to the pattern in Fig.~\ref{Fig2_Test_2Gaps}(a). 
 By this example,  we numerically validate the 
 predicted convergence rates on $\mathcal{T}_H^*$ with  overlaps,  
for the case of having  smooth solutions and discontinuous coefficient $\rho$. 
 The exact solution is given by the formula
\begin{align}\label{NE_1}
u(x,y)=
    \begin{cases}
	      \sin(\pi(2x+y)) &\text{ if } x<0 \\
	      \sin(\pi(\frac{3\pi}{2} x+y)) &\text{ otherwise}. 
  \end{cases} 
\end{align} 
The  boundary conditions and the source function $f$ are  determined by (\ref{NE_1}).  
Note that, we have  $\llbracket u \rrbracket |_{F_{ij}}=0$ as 
well $\llbracket \rho \nabla u \rrbracket |_{F_{ij}}\cdot n_{F_{ij}}=0$ for 
 all the interior physical  interfaces $F_{ij}$.  
\par
The problem has been solved   on a sequence of meshes  with $h_0,...,h_i,\,h_{i+1},...$,
   following a sequential refinement process, i.e., $h_{i+1}=\frac{h_i}{2}$,
   where we set  $d_o=h_{i}^\lambda$, with $\lambda \in\{1,2,2.5,3\}$. 
   For the numerical tests, we use    B-splines of the degree $p=2$.   
   Hence, we expect optimal rates for $\lambda \geq  2.5$.  
   In Fig.~\ref{Fig2_Test_2Gaps}(b) the approximate solution $u^*_h$ is presented on a relative coarse mesh  with $d_o = 0.06$.  
   The results of the computed rates are presented in  Fig.~\ref{Fig2_Test_2Gaps}(c). For all test cases, we can observe  that our theoretical results presented in Table \ref{table_value_r} are confirmed. 

  \begin{figure}
   \begin{subfigmatrix}{3}
 \subfigure[]{\includegraphics[width=5.5cm, height=5.75cm]{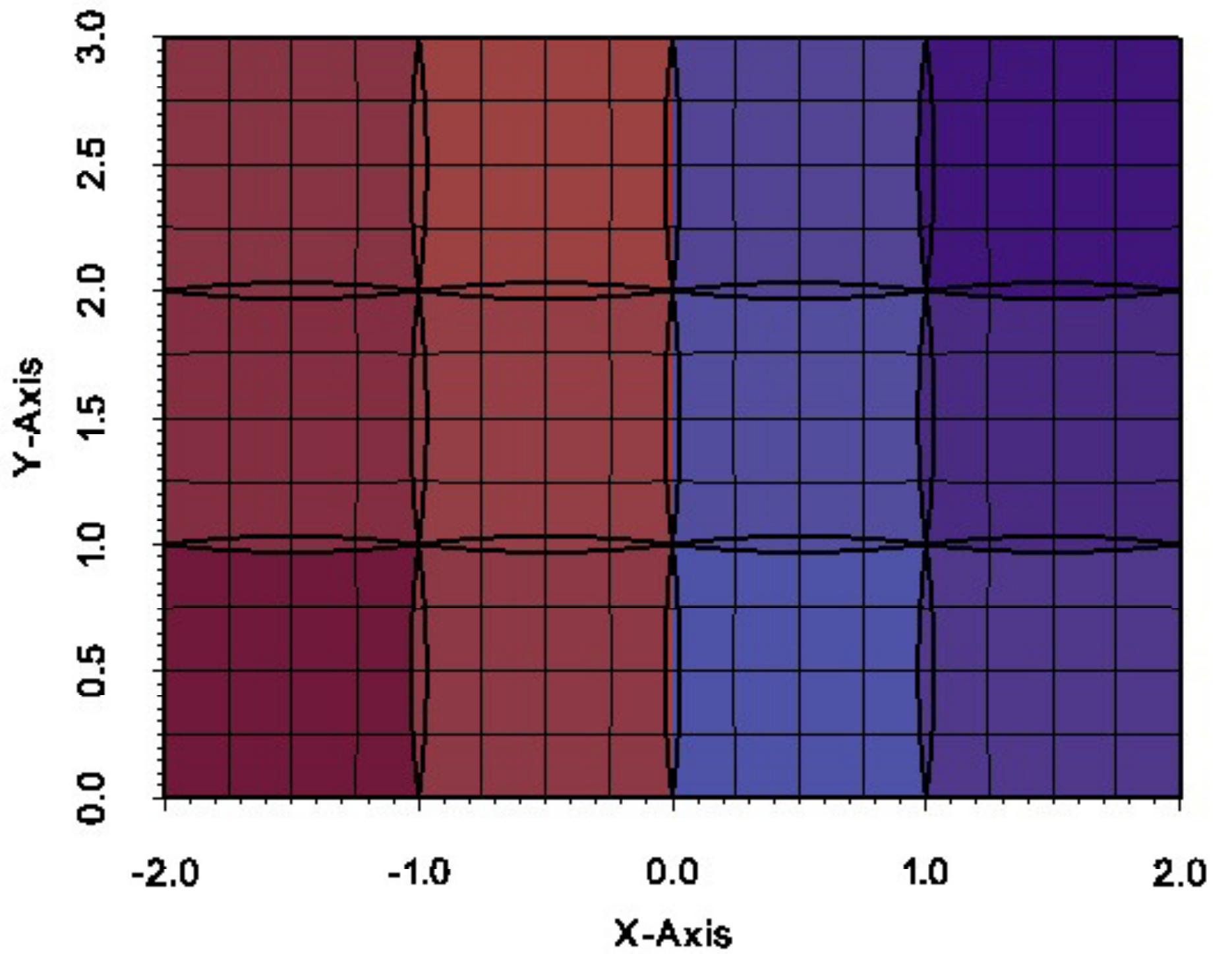}}
 \subfigure[]{\includegraphics[width=5.5cm, height=5.75cm]{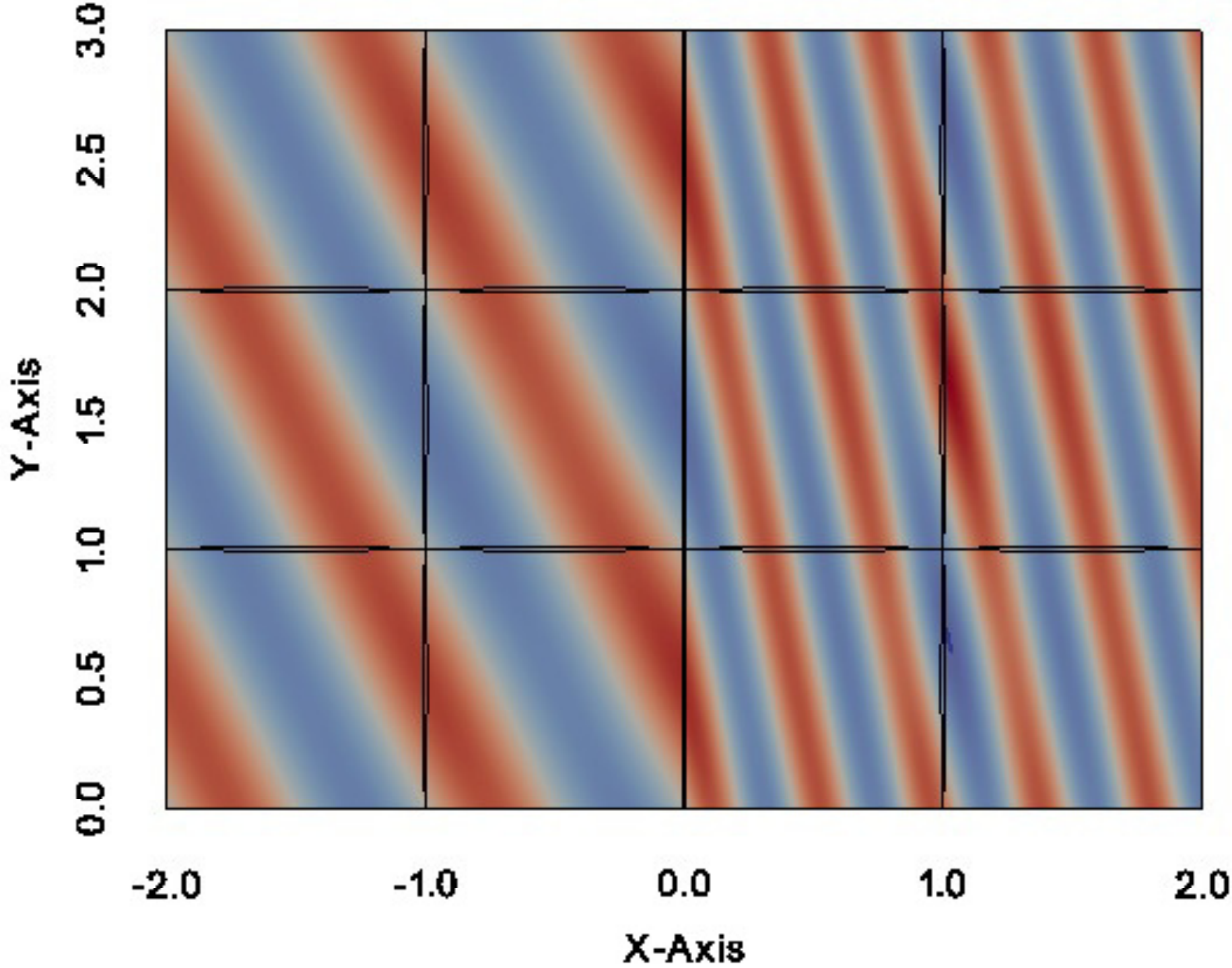}}
  \subfigure[]{\includegraphics[width=5.cm, height=5.75cm]{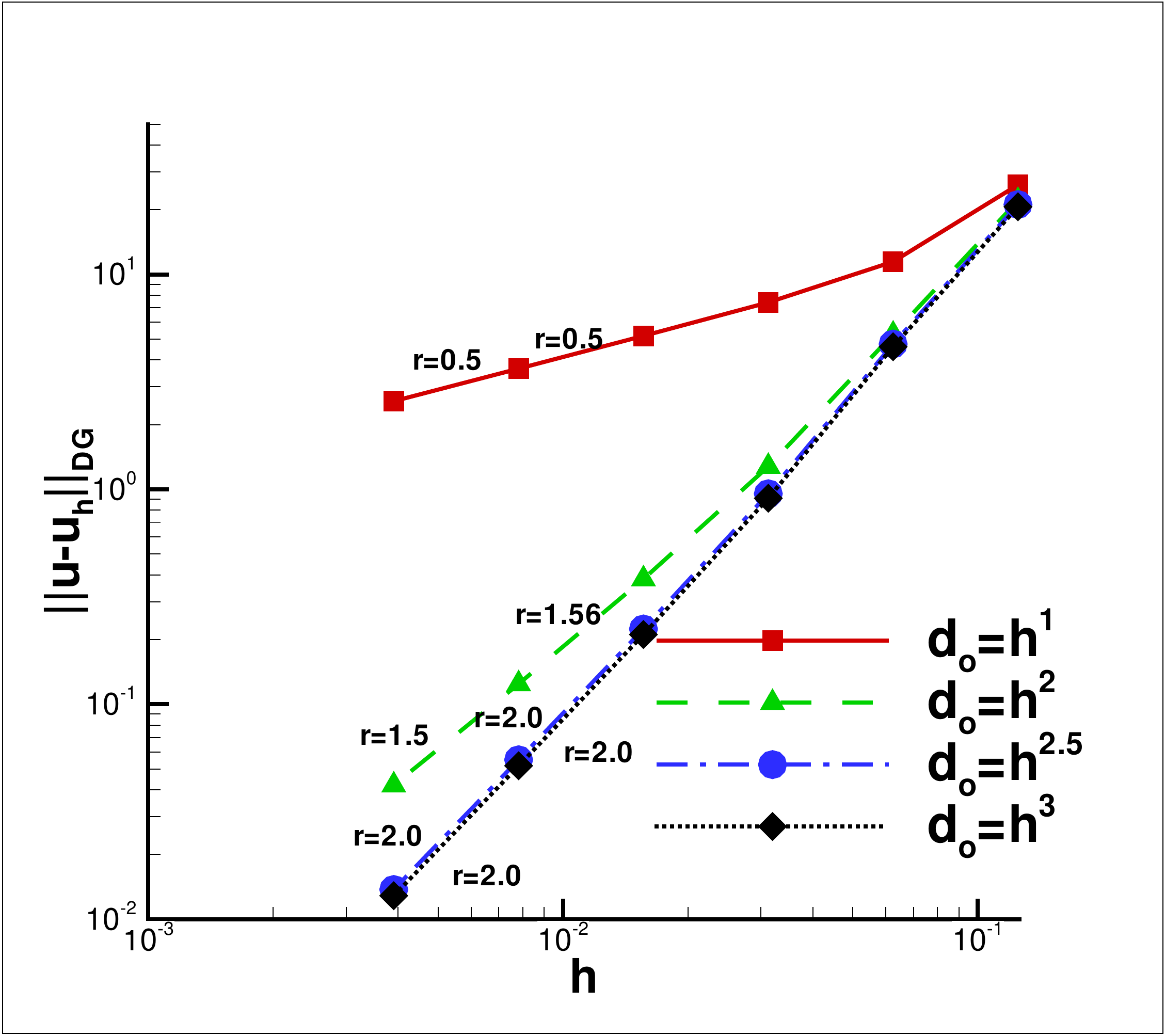}}
  \end{subfigmatrix}
   \caption{Example 2: (a) The overlapping patches $\Omega_i^*$  and the pattern of diffusion coefficients $\rho_i$, 
                       (b) The contours of $u^*_h$ on every  $\Omega_i$ computed  with $d_0=0.06$,
                       (c) The convergence rates for the 4 choices of $\lambda$. }
   \label{Fig2_Test_2Gaps}
 \end{figure}
\paragraph{Example  3: overlapping regions  with more than two faces. }  
\textcolor{black}{
The proposed method is now applied to a more complicated overlapping boundary with multiple  faces.
The geometric description  of the problem in shown in Fig. \ref{Fig_MultipatchTest}(a), the domain 
is decomposed into four patches and the overlapping region is defined by four interfaces. The exact solution is given by 
\begin{align}\label{NE_4patches}
u(x,y)= \sin(\pi(x+0.4))\sin(2\pi(y+0.3))+x+y
\end{align} 
The diffusion coefficient is globally constant, i.e., $\rho=1$,  the right-hand side $f$ and the Dirichlet boundary conditions are manufactured by the solution (\ref{NE_4patches}). 
We solved the problem using B-splines of degree $p=2$.  
In Fig. \ref{Fig_MultipatchTest}(b), we present the contours of the DG-IGA solution $u_h^*$  computed on the second mesh in a sequence. The corresponding error convergence results for the four values of $\lambda$, 
i.e., $\lambda \in \{1,2,2.5,3\}$, are given in Fig. \ref{Fig_MultipatchTest}(c). We can observe the 
suboptimal behavior of the rate for $\lambda=1$ and $\lambda=2$ as we move to the last mesh levels. 
On the other hand
we have optimal rates for the rest values  of $\lambda$. The numerical rates for all $\lambda$ cases are in agreement with the theoretical results. 
}
\begin{figure}
   \begin{subfigmatrix}{3}
 \subfigure[]{\includegraphics[width=6.0cm, height=5.75cm]{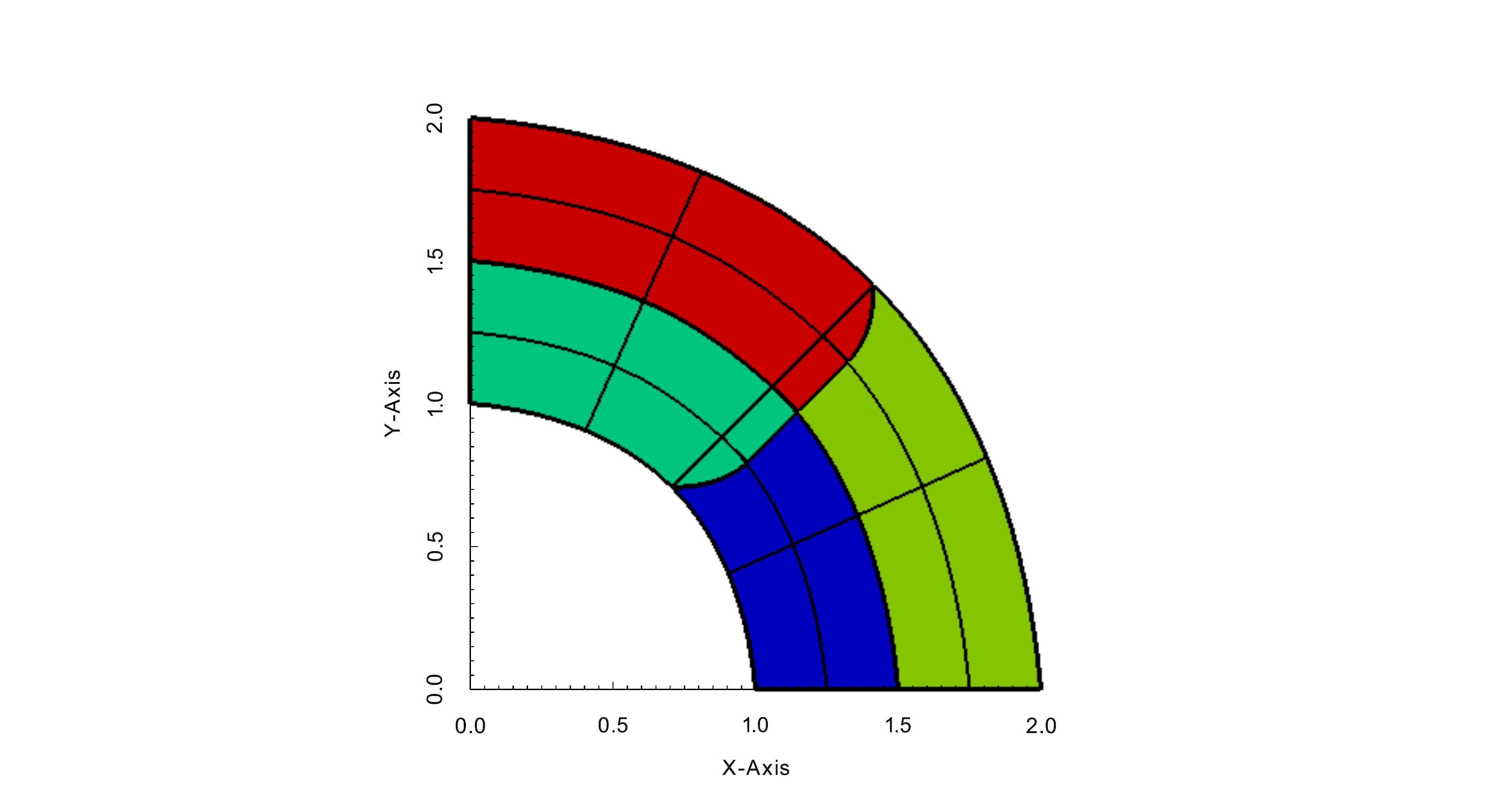}}
 \subfigure[]{\includegraphics[width=6.0cm, height=5.75cm]{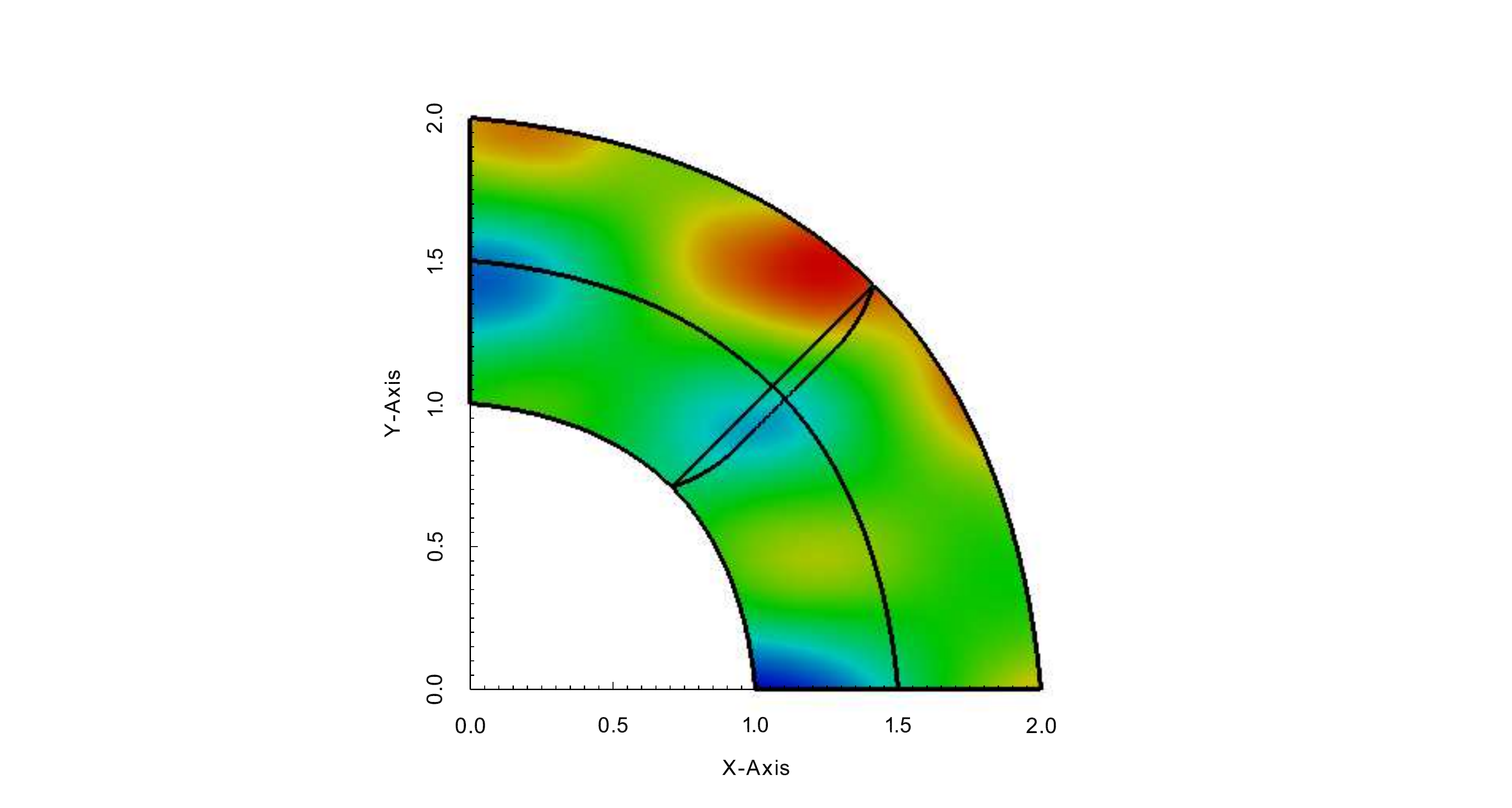}}
  \subfigure[]{\includegraphics[width=5.cm, height=5.75cm]{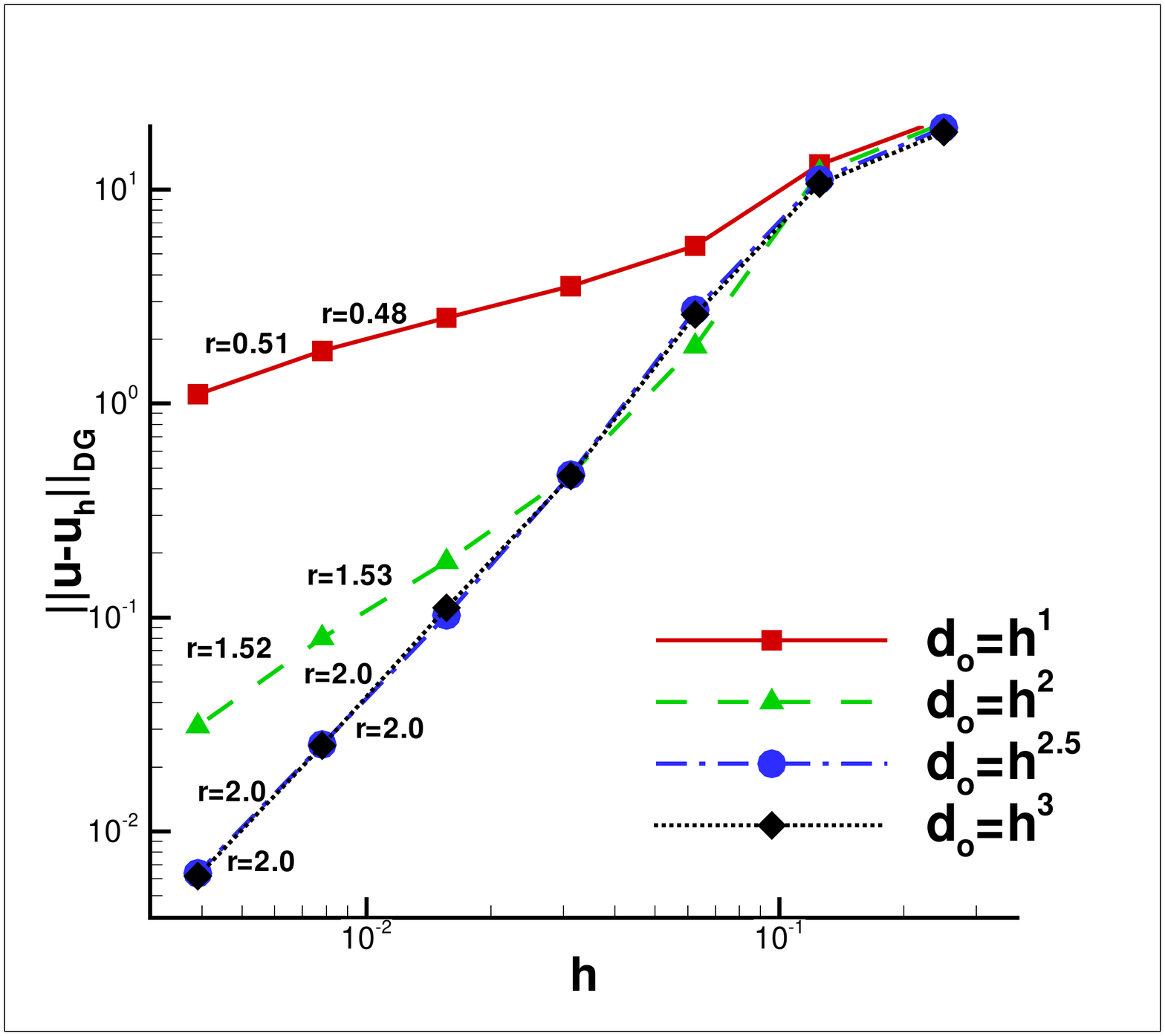}}
  \end{subfigmatrix}
   \caption{Example 3: (a) The overlapping patches $\Omega_i^*$  and the multiple curve boundary of the overlapping region, (b) The contours of $u^*_h$ on every  $\Omega_i$ computed on the second mesh level,     (c) The convergence rates for the 4 choices of $\lambda$. }
   \label{Fig_MultipatchTest}
 \end{figure}

 \subsection{Three-dimensional numerical examples}
 As a final example, we consider a  three-dimensional test.  
 The domain $\Omega$ has been constructed by  a straight  prolongation to the $z$-direction  of a two dimensional (curved) domain,
  see Fig. \ref{Fig3_3DTest_3}(a).
 The two physical domains $\Omega_1$ and $\Omega_2$ have the physical interface $F_{12}$ consisting of all points $(x,y,z)$ such that  $-1\leq x\leq 0,\,x+y=0$ and $0\leq z \leq 1$, see Fig. \ref{Fig3_3DTest_3}(a).
  The knot vector in $z$-direction  is simply $\Xi_i^3=\{0 , 0 ,0 , 0.5 ,1, 1, 1\}$ with $i=1,2$.
  We solve the problem  using  matching meshes, as depicted in  Fig. \ref{Fig3_3DTest_3}(a).  The B-spline parametrizations of these domains are constructed by adding a third component to the control points with the following values $\{0,0.5,1\}$. 
   The completed knot vectors $\mathbf{\Xi}_{i=1,2}^{k=1,2,3}$  together with the associated control nets can be found in
   G+SMO library in the file \verb+bumper.xml+. 
  The overlap region is artificially constructed by moving only the interior control points located at the interface into the normal direction $n_{F_{12}}$ of the related interface $F_{12}$. 
  Due to the fact that the  overlap has to be inside of the domain, we have to provide cuts though the domain in order to visualize them, cf.  Fig.~\ref{Fig3_3DTest_3}(b).
  The Dirichlet boundary conditions $u_D$ and the right hand side $f$, see (\ref{0}), are chosen such that the exact solution is 	
  	\begin{align}\label{NE_3}
  	 	                  u(x,y,z)=
 	                  \begin{cases}
                             \sin(\frac{\pi}{2}(x+y)) & \text{if } (x,y)\in \Omega_1, \\
                              e^{\sin(x+y)} & \text{if} {\ }(x,y)\in \Omega_2.
                          \end{cases}
                          \end{align} 
  	with diffusion coefficient $\rho=\{1,\pi/2\}$. Note that the interfaces conditions (\ref{Interf_Cond}) are satisfied. 
  	The two physical subdomains, the initial matching meshes and the 
  	exact solution are illustrated in Fig.~\ref{Fig3_3DTest_3}(a).   We construct an overlap region with $d_o=0.5$ and solve the
  	problem using $p=2$ B-spline functions. In Fig.~\ref{Fig3_3DTest_3}(b),  we show the domain meshes 
  	$T^{(i)}_{h_i,\Omega_i^*}\,,i=1,2$, the overlapped  meshes in $\Omega_{o12}$ and 
  	we plot the contours of the produced solution $u^*_h$ for the interior plane $z=0.5$.  We can see that, 
  	both faces of $\partial \Omega_{o12}$ are not parallel to the Cartesian axes. 
  	{Moreover, we point out that the problem has been solved using  non matching meshes on
  	the overlapping interfaces.}
  	We have computed the convergence rates for four different values  $\lambda\in\{1,2,2.5,3\}$ related
  	to the overlapping region width $d_o=h^\lambda$. The results of the computed rates are plotted in Fig.~\ref{Fig3_3DTest_3}(c). We observe from the plots that the  rates $r$ are  in agreement with the   rates  predicted by the theory, see estimate (\ref{mainErrEstm_0}) and Table \ref{table_value_r}. 
  \begin{figure}
   \begin{subfigmatrix}{3}
\subfigure[]{\includegraphics[width=5.450cm, height=5.475cm]{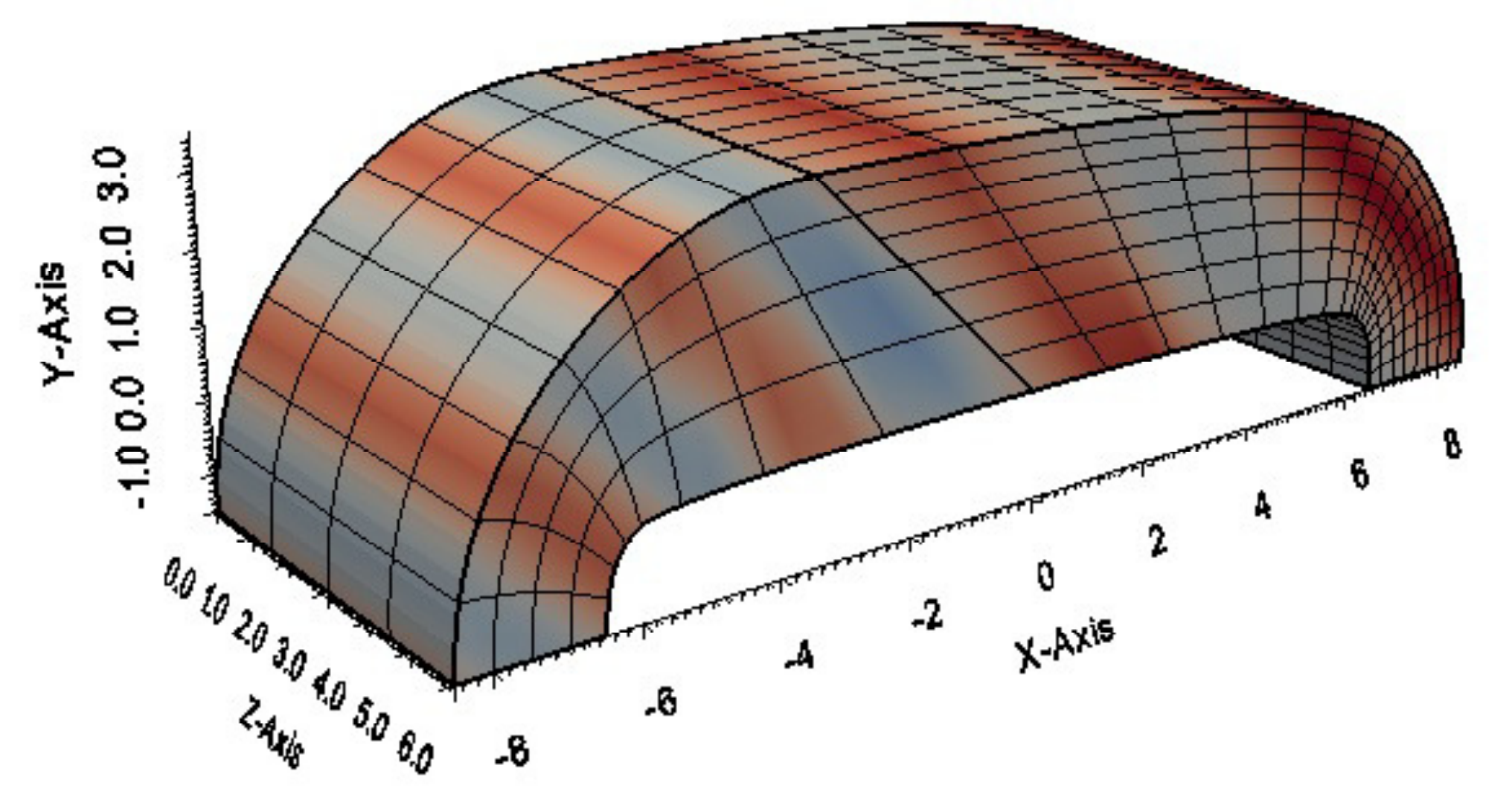}}
 \subfigure[]{\includegraphics[width=5.450cm, height=5.550cm]{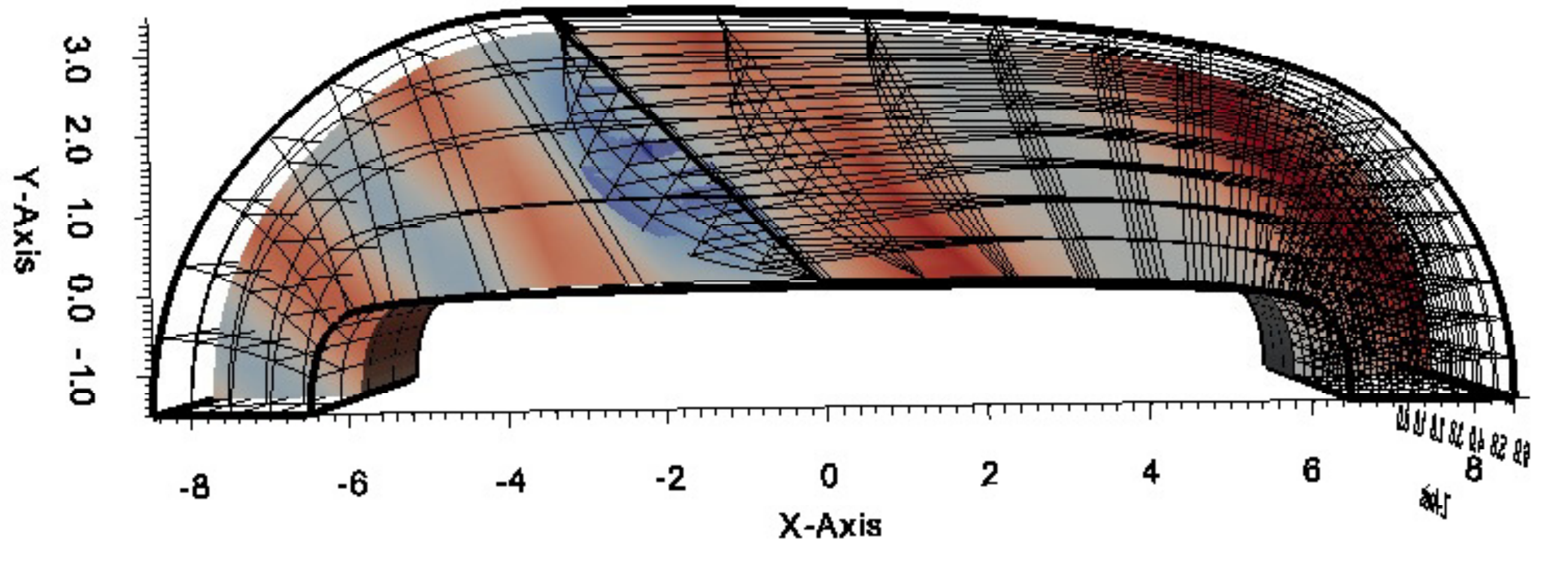}}
\subfigure[]{\includegraphics[width=5.450cm, height=5.550cm]{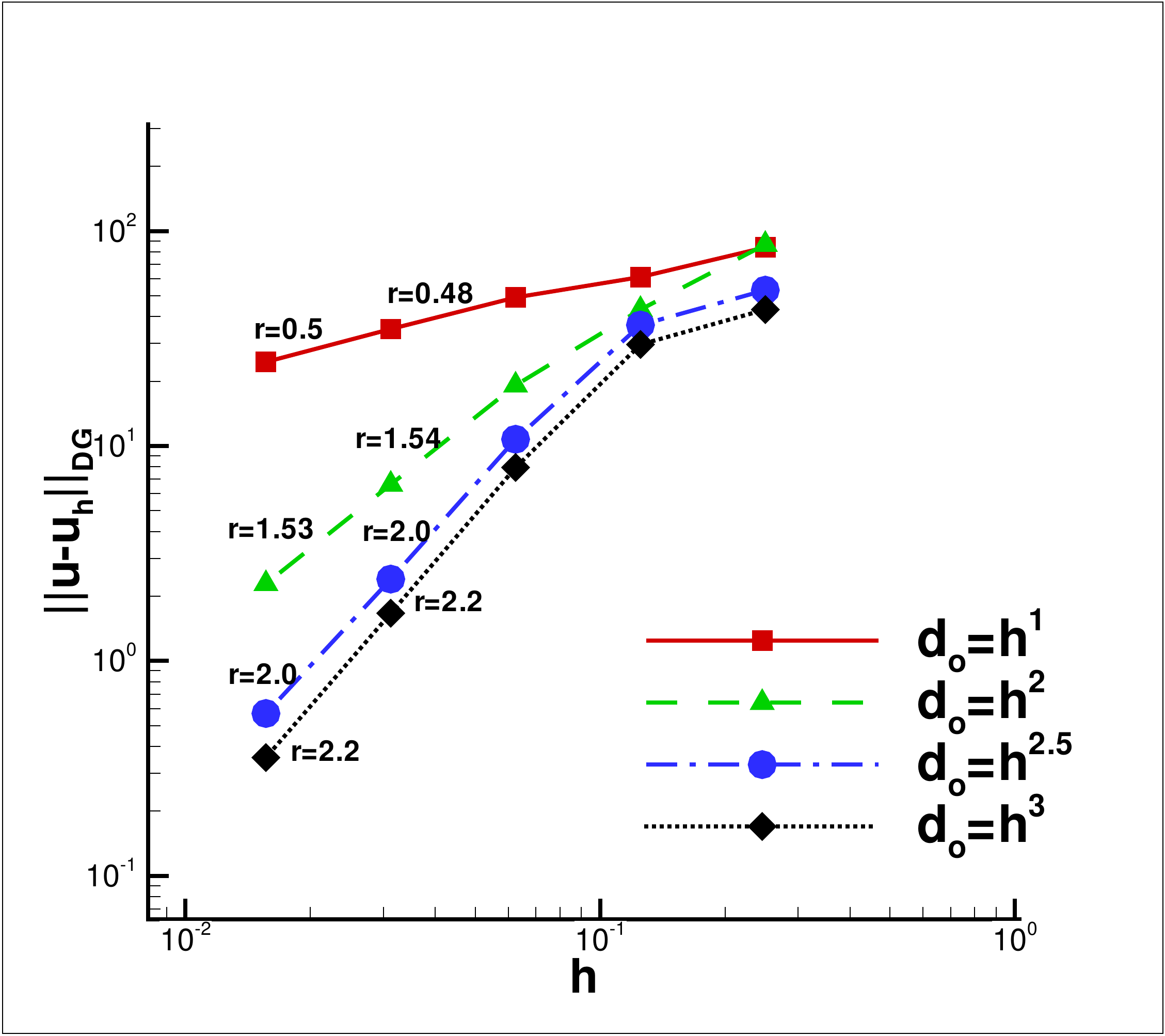}}
  \end{subfigmatrix}
   \caption{Example 4, $\Omega\subset \mathbb{R}^3$: (a) The physical patches with an initial coarse mesh and the contours of 
                          the exact solution, 
                       (b) The contours of $u^*_h$ computed on $\Omega_1^*\cup\Omega_2^*$ with $d_o=1.5$, 
                       (c) Convergence rates $r$ for the four values of $\lambda$.}
   \label{Fig3_3DTest_3}
 \end{figure}

\section{Conclusions}
In this article,  we have proposed  and analyzed a DG-IGA scheme for discretizing linear,
second-order, diffusion problems 
on IGA  multipatch representations with small   overlapping regions. 
This type of dmultipatch representations   lead   to the use of  different
diffusion coefficients on the  overlapping patches.
Auxiliary problems were introduced in every  patch
 and DG-IGA methodology applied for discretizing
 these problems.  
The    normal fluxes on the overlapped interior faces  were appropriately modified using Taylor expansions, 
and these fluxes were further used to construct numerical fluxes 
in order to couple  the associated discrete DG-IGA  problems.  
The method were successfully applied  
to the discretization of the diffusion
problem in cases with     complex   overlaps. 
A priori error estimates in the  DG-norm 
were shown in terms of 
the mesh-size $h$ and the maximum width $d_o$ of the overlapping regions.  
The estimates  were confirmed
by solving several two- and three- dimensional test problems with known exact solutions. 
The theoretical estimates 
 were also confirmed by  performing numerical tests   using non-matching grids on the overlapping faces.

 \section*{Acknowledgments}
The authors wish to thank Prof. Ulrich Langer, Prof. Bert J\"uttler and  Prof.  Dirk Pauly for many interesting discussions. 
This work was supported by the Austrian Science Fund (FWF) under the grant NFN S117-03 and  W1214-N15, project DK4.

\section{Appendix.}
\subsection{A bound for the extra non-consistent term. }
\label{Apend_u_minus_u*}
\textcolor{black}{
Comparing the relations given in  (\ref{0_PetrVF}) and (\ref{A_VaraForm_u_Omega*})
we can see  that there is  an extra  term  $-(\rho_2-\rho_1)\nabla u_2^*$ in $\Omega_{o21}$, 
which is a non consistent  term.  We derive   below a bound  for this term. \\
Let $\phi\in H^{1}_{0}(\Omega_2^*)$. By a simple computations on the forms in (\ref{Artf_Vart_Problm}), we have that
\begin{alignat}{2}\label{ConsiErr_1}
\nonumber
a_{2}^{*}(u_2^*,\phi_h)=&\int_{\Omega_{o21}}\rho_1\nabla u_2^*\cdot \nabla \phi\,dx 
                   + \int_{\Omega_{2}}\rho_2\nabla u_2^*\cdot \nabla \phi\,dx  
    -\int_{\partial \Omega_2\cap\partial \Omega}\rho_2\nabla u_2^*\cdot n_{\partial \Omega_2} \phi\,d\sigma\\
 -& \int_{F_{o21}} {\rho_2}\nabla u_2^*\cdot n_{F_{o21}}\phi\,d\sigma 
 =\int_{\Omega_{o21}}(\rho_1-\rho_2)\nabla u_2^*\cdot \nabla \phi\,dx + l_{2,f}^{*}(\phi). 
  \end{alignat}
 On the other hand, under the Assumption \ref{Assumption1}, we immediately have that
  \begin{alignat}{2}\label{ConsiErr_2}
  a_{o,2}(u,\phi_2) = & \int_{\Omega_{o21}}\rho_1\nabla u\cdot\nabla \phi\,dx +
                       \int_{\Omega_2}\rho_2\nabla u\cdot\nabla \phi\,dx \\
                       \nonumber
        -&\int_{F_{o21}}  \rho_1 \nabla u  \cdot n_{F_{o21}}\phi\,d\sigma-
        \int_{\partial \Omega_2^*\cap\partial\Omega}  \rho_2 \nabla u  \cdot n_{\partial{\Omega_2}}\phi\,d\sigma
                 = l_{2,f}^{*}(\phi).
 \end{alignat}
 Subtracting (\ref{ConsiErr_2}) from (\ref{ConsiErr_1}) and using $\phi|_{\partial\Omega_2^*}=0$
 we  obtain
  \begin{alignat}{2}\label{ConsiErr_2_a}
  	\begin{split}
  	\int_{\Omega_{o21}}\rho_1\nabla(u_2^*-u)\cdot\nabla \phi\,dx+ 
  	\int_{\Omega_{2}}\rho_2\nabla(u_2^*-u)\cdot\nabla \phi \,dx = 
  	\int_{\Omega_{o21}}(\rho_1-\rho_2)\nabla u_2^*\cdot\nabla \phi \,dx.
  	\end{split}
  \end{alignat}
  Applying integration by parts on the right 
  hand side in (\ref{ConsiErr_2_a}) and then setting $\phi=u_2^*-u$, we derive that
\begin{alignat}{2}\label{ConsiErr_2_b}
  	\begin{split}
  	\int_{\Omega_{2}^*}\rho&|\nabla(u_2^*-u)|^2\,dx = 
  	c_\rho \Big(-\int_{\Omega_{o21}}\rho_2\Delta u_2^*(u_2^*-u) \,dx +
  	\int_{F_{o12}}\rho_2 \nabla u_2^*\cdot n_{F_{o12}}(u_2^*-u)\,d\sigma\Big) \\
  	  	\leq &
  	c_\rho\Big(\int_{\Omega_{o21}}f\,(u_2^*-u) \,dx +
  	\int_{F_{o12}}\rho_2 \nabla u_2^*\cdot n_{F_{o12}}(u_2^*-u)\,d\sigma\Big) \\
  	  	 \overset{(\ref{HolderYoung})}{\leq} &
  	 	c_\rho \|f\|_{L^2(\Omega_{o21})}\|u_2^*-u\|_{L^2(\Omega_{o21})} +
  	 	 \|\rho_2 \nabla u_2^*\|_{L^2(F_{o12})}\|u_2^*-u\|_{L^2(F_{o12})} \\
          \overset{(\ref{Poincare_trace})}{\leq} &
  	 	c_{\rho} \|f\|_{L^2(\Omega_{o21})}\,\|u_2^*-u\|_{L^2(\Omega_{o21})} +
  	 	 \|\rho_2 \nabla u_2^*\|_{L^2(F_{o12})}
  	 	 \|u_2^*-u\|^{\frac{1}{2}}_{L^2(\Omega_{o21})}\|u_2^*-u\|^{\frac{1}{2}}_{H^1(\Omega_{o21})}\\
        \overset{(\ref{Poincare_trace})}{\leq} &
  	 	c_1\Big( \|f\|_{L^2(\Omega_{o21})}\,d_o\|\nabla (u_2^*-u)\|_{L^2(\Omega_{o21})} \\
  	 	&+ \|\rho_2 \nabla u_2^*\|_{L^2(F_{o12})}\,
  	 	d_o^{\frac{1}{2}}\|\nabla (u_2^*-u)\|^{\frac{1}{2}}_{L^2(\Omega_{o21})}
  	 	(d_o+1)\|\nabla (u_2^*-u)\|^{\frac{1}{2}}_{L^2(\Omega_{o21})}\\   
  	 \leq &c_2 \Big(\|f\|_{L^2(\Omega_{o21})}+	\|\rho_2 \nabla u_2^*\|_{L^2(F_{o12})}\Big)\,
  	 d_o^{\frac{1}{2}}\|\nabla (u_2^*-u)\|_{L^2(\Omega_{o21})},
  	\end{split}
  \end{alignat}
 where we have used that $0<d_o<1$. By (\ref{ConsiErr_2_b}), we can easily obtain that 
 \begin{align}
\|\rho\nabla (u_2^*-u)\|_{L^2(\Omega_2^*)} \leq c_2\,d_o^{\frac{1}{2}}
 \Big(\|f\|_{L^2(\Omega_{o21})}+	\|\rho_2 \nabla u_2^*\|_{L^2(F_{o12})}\Big),
 \end{align} 
 and this gives an estimate of the difference between the physical solution $u$ and the perturbed solution $u^*$.
 }
\subsection{Proof of the  interpolation estimate (\ref{Apend_Eq1_A}) }
\textcolor{black}{
Note that by Assumption \ref{Assu_Omega_*Shape} and the definition of (PV1) we can conclude that 
$\Omega_1=\Omega_1^*$ and $u|_{\Omega^*_1}=u^*_{\Omega^*_1}$. Hence we can construct an interpolant 
$\Pi_{1,h}^*u$ such that 
\begin{align}\label{Apendix_0}
\begin{split}
	\Big(\nabla(u_1-\Pi_{1,h}^*u_1)\|^2_{L^2(\Omega^*_1)} + 
	h\|\nabla(u_1-\Pi_{1,h}^*u_1)\|^2_{L^2(F_{o12})} \quad \\
	\quad+ 
	\frac{1}{h}\|(u_1-\Pi_{1,h}^*{u}_1)\|^2_{L^2(F_{o21})}\big)^{\frac{1}{2}} \leq 
	C_1 h^{\min(\ell-1,p)} \|u\|_{H^\ell(\Omega_1^*)}.
	\end{split}
\end{align}
Next we show an interpolation estimate for $u$ on $\Omega_2^*$. Let us  denote $D_1=\Omega_{o21}$ and $D_2=\Omega_2$. 
Let the extension operator $E_i:H^\ell(D_i)\rightarrow H^\ell(\Omega^*_i),\,i=1,2,$ 
such that for each $v\in H^\ell(D_i)$ it holds
(i) $(E_iv)|_{D_i} = v$ and (ii) $\|E_i v\|_{H^\ell(\Omega_i^*)} \leq C_{E_i} \|v\|_{H^\ell(D_i)}$, where the constant 
$C_{E_i}$ depending only on $D_i$ and $\Omega_i^*$, see \cite{Evans_PDEbook}. 
We recall  the B-spline interpolation operator$ \Pi_h^*$ given in (\ref{Intep_Est_1}) and define 
\begin{align}\label{Apendix_1}
	\Pi_{h}^* \tilde{v} :=(\Pi_{1,h}^*\tilde{v}_1, \Pi_{2,h}^* \tilde{v}_2),
	\quad \text{where}{\ }\Pi_{i,h}^*  \tilde{v}_i:=\Pi_{i,h}^* E_i v.    
\end{align}
Recalling $u_i=u|_{\Omega_i}$ and using  the properties of the extension operator and (\ref{Intep_Est_1}) we have
\begin{subequations}\label{Apendix_3}
\begin{align}\label{Apendix_3a}
	\|\nabla(u_1 - \Pi_{1,h}^* \tilde{u}_1)\|_{L^2(\Omega_{o21})} \leq 
	\|\nabla(E_1u - \Pi_{1,h}^* \tilde{u}_1)\|_{L^2(\Omega_1^*)} 
	\leq C_{intp} C_{E_1}\, h^s \|u_1\|_{H^{\ell}(\Omega^*_{1})},  
\end{align}
\text{and}
\begin{align}\label{Apendix_3b}
	\|\nabla(u_2 - \Pi_{2,h}^* \tilde{u}_2)\|_{L^2(\Omega_{2})} \leq 
	\|\nabla(E_2 u - \Pi_{2,h}^* \tilde{u}_2)\|_{L^2(\Omega_{2}^*)} \leq C_{intp} C_{E_2}\, h^s \|u_2\|_{\Omega_{2}},  
\end{align}
\end{subequations}
where $s=\min(p, \ell-1)$. \\
Using the trace inequality, \cite{LT:LangerToulopoulos:2014a},  
$\|v\|^2_{L^2(F_{21})} \leq C \big(h^{-1}\|v\|^2_{L^2(\Omega_o21)} +h|\nabla v\|^2_{L^2(\Omega_{o21})} \big)$
and proceeding as in (\ref{Apendix_3}) we can show
\begin{subequations}\label{Apendix_4}
\begin{align}\label{Apendix_4a}
\big(h\|\nabla(u_1-\Pi_{1,h}^*\tilde{u}_1)\|^2_{L^2(F_{o21})}\big)^{\frac{1}{2}} \leq C_{intp} C_{E_1}\, h^s \|u_1\|_{H^{\ell}(\Omega^*_{1})}, \\
\big(\frac{1}{h}\|(u_1-\Pi_{1,h}^*\tilde{u}_1)\|^2_{L^2(F_{o21})}\big)^{\frac{1}{2}} \leq
C_{intp} C_{E_1}\,  h^s \|u_1\|_{H^{\ell}(\Omega^*_{1})}.
\end{align}
\end{subequations}
Gathering the inequalities (\ref{Apendix_0}), (\ref{Apendix_3}) and (\ref{Apendix_4a})  we can derive  (\ref{Apend_Eq1_A}). 
}

\bibliographystyle{plain}
\bibliography{ShpClc_Overl_IgA.bib}

\begin{thebibliography}{10}

\bibitem{Apostolatos_Schmidt_Wuchner_Bletzinger_IJNUMEng_2014}
A.~Apostolatos, R~Schmidt, R.~W\"uchner, and K.~U. Bletzinger.
\newblock A {N}itsche-type formulation and comparison of the most common domain
  decomposition methods in isogeometric analysis.
\newblock {\em Int. J. Numer. Meth. Engng}, 97:473--504, 2014.

\bibitem{LT:Bazilevs_IGA_ERR_ESti2006a}
Y.~Bazilevs, L.~da~Veiga Beir\~ao, J.~A. Cottrell, T.J.R. Hughes, and
  G.~Sangalli.
\newblock Isogeometric analysis: approximation, stability and error estimates
  for $h$-refined meshes.
\newblock {\em Math. Mod. Meth. Appl. Sci.}, 16(7):1031--1090, 2006.

\bibitem{Hughe_Bazilevs_WeaklyBC_2007}
Y.~Bazilevs and T.J.R. Hughes.
\newblock Weak imposition of dirichlet boundary conditions in fluid mechanics.
\newblock {\em Computers and Fluids}, 36(1):12 -- 26, 2007.

\bibitem{Bazilevs_FSI_2013}
Y.~Bazilevs, K.~Takizawa, and T.~E. Tezduyar.
\newblock {\em Computational fluid - structure interaction, methods and
  applications}.
\newblock Wiley {S}eries in {C}omputational {M}echanics. John Wiley and Sons,
  Ltd, 2013.

\bibitem{BUCHEGGER_Jutler2017a}
F.~Buchegger and B.~J\"uttler.
\newblock Planar multi-patch domain parameterization via patch adjacency
  graphs.
\newblock {\em Computer-Aided Design}, 82(Supplement C):2--12, 2017.

\bibitem{LT:Hughes_IGAbook_2009}
J.~A. Cotrell, T.J.R. Hughes, and Y.~Bazilevs.
\newblock {\em Isogeometric {A}nalysis, Toward {I}ntegration of {CAD} and
  {FEA}}.
\newblock John Wiley and Sons, Sussex, United Kingdom, 2009.

\bibitem{VeigaBuffaSangalli_2014}
L.~da~Veiga Beir\~ao, A.~Buffa, G.~Sangalli, and R.~V\'azquez.
\newblock Mathematical analysis of variational isogeometric methods.
\newblock {\em Acta Numerica}, 23:157--287, 5 2014.

\bibitem{CarlDeBoor_Splines_2001}
C.~De-Boor.
\newblock {\em A {P}ractical {G}uide to {S}plines}, volume~27 of {\em Applied
  Math. Science}.
\newblock Springer, New York, 2 edition, 2001.

\bibitem{Evans_PDEbook}
L.~C. Evans.
\newblock {\em Partial {D}ifferential {E}questions}, volume~19 of {\em Graduate
  {S}tudies in {M}athematics}.
\newblock American {M}athematical {S}ociety, 1st {E}dition edition, 1998.

\bibitem{Jutler_Falini_Spech_2015}
A.~Falini, J.~\v{S}peh, and B.~J\"uttler.
\newblock Planar domain parameterization with {THB}-splines.
\newblock {\em Computer Aided Geometric Design}, vol:35-36:95--108, 2015.

\bibitem{HLT:Hofer:2017a}
C.~Hofer.
\newblock Parallelization of continuous and discontinuous galerkin dual-primal
  isogeometric tearing and interconnecting methods.
\newblock {\em Computers \& Mathematics with Applications}, 74(7):1607--1625,
  2017.

\bibitem{HLT:Hofer:2018a}
C.~Hofer.
\newblock Analysis of discontinuous galerkin dual-primal isogeometric tearing
  and interconnecting methods.
\newblock {\em Mathematical Models and Methods in Applied Sciences},
  28(01):131--158, 2018.

\bibitem{HLT:HoferLanger:2016a}
C.~Hofer and U.~Langer.
\newblock Dual-primal isogeometric tearing and interconnecting solvers for
  multipatch d{G}-{I}g{A} equations.
\newblock {\em Computer Methods in Applied Mechanics and Engineering}, 316":2
  -- 21, 2017.

\bibitem{HoferLangerToulopoulos_2016_SISC}
C.~Hofer, U.~Langer, and I.~Toulopoulos.
\newblock Discontinuous {G}alerkin isogeometric analysis of elliptic diffusion
  problems on segmentations with gaps.
\newblock {\em SIAM J. SCI. COMPUT.}, 38:A3430 -- A3460, 2016.

\bibitem{Report_Hofer_Langer_ToulopoulosDGIGA_GapOver2016}
C.~Hofer, U.~Langer, and I.~Toulopoulos.
\newblock Discontinuous {G}alerkin isogeometric analysis on non-matching
  segmentation: Error estimates and efficient solvers.
\newblock RICAM report No. 2016-23,
  http://www.ricam.oeaw.ac.at/publications/ricam-reports/, 2016.

\bibitem{HoferToulopoulos_IGA_Gaps_2015a}
C.~Hofer and I.~Toulopoulos.
\newblock Discontinuous {G}alerkin {I}sogeometric {A}nalysis of {E}lliptic
  {P}roblems on {S}egmentations with {N}on-matching {I}nterfaces.
\newblock {\em Computers and Mathematics with Applications}, 72(7):1811--1827,
  2016.

\bibitem{Hoschek_Lasser_CAD_book_1993}
J.~Hoschek and D.~Lasser.
\newblock {\em Fundamentals of {C}omputet {A}ided {G}eometric {D}esign}.
\newblock A K Peters, Wellesley, Massachusetts, 1993.
\newblock Translated by L. Schumaker.

\bibitem{LT:HughesCottrellBazilevs:2005a}
T.J.R. Hughes, J.~A. Cottrell, and Y.~Bazilevs.
\newblock Isogeometric analysis : {CAD}, finite elements, {NURBS}, exact
  geometry and mesh refinement.
\newblock {\em Comput. Methods Appl. Mech. Engrg.}, 194:4135--4195, 2005.

\bibitem{HLT:JuettlerKaplNguyenPanPauley:2014a}
B.~J\"uttler, M.~Kapl, D.-M. Nguyen, Q.~Pan, and M.~Pauley.
\newblock Isogeometric segmentation: The case of contractible solids without
  non-convex edges.
\newblock {\em Computer-Aided Design}, 57:74--90, 2014.

\bibitem{HLT:JuettlerLangerMantzaflarisMooreZulehner:2014a}
B.~J\"uttler, U.~Langer, A.~Mantzaflaris, S.E. Moore, and W.~Zulehner.
\newblock Geometry + {S}imulation {M}odules: {I}mplementing {I}sogeometric
  {A}nalysis.
\newblock {\em PAMM}, 14(1):961--962, 2014.

\bibitem{LangerMantzaflarisMooreToulopoulos_IGAA_2014a}
U.~Langer, A.~Mantzaflaris, St.~E. Moore, and I.~Toulopoulos.
\newblock {\em Multipatch Discontinuous {G}alerkin Isogeometric Analysis},
  volume 107 of {\em Lecture Notes in Computational Science and Engineering},
  pages 1--32.
\newblock Springer International Publishing, Heidelberg, 2015.

\bibitem{LT:LangerToulopoulos:2014a}
U.~Langer and I.~Toulopoulos.
\newblock Analysis of {M}ultipatch {D}iscontinuous {G}alerkin {IgA}
  {A}pproximations to {E}lliptic {B}oundary {V}alue {P}roblems.
\newblock {\em Computing and Visualization in Science}, 17(5):217--233, 2016.

\bibitem{gismoweb}
A.~Mantzaflaris, C.~Hofer, et~al.
\newblock {G+SMO} ({G}eometry plus {S}imulation {MO}dules) v0.8.1.
\newblock http://gs.jku.at/gismo, 2015.

\bibitem{HTL:NguyenPauleyJuettler:2014a}
D.-M. Nguyen, M.~Pauley, and B.~J\"uttler.
\newblock Isogeometric segmentation. part ii: On the segmentability of
  contractible solids with non-convex edges.
\newblock {\em Graphical Models}, 76:426--439, 2014.

\bibitem{HLT:JuettlerNguyenPauley:2016a}
D.-M. Nguyen, M.~Pauley, and B.~J\"uttler.
\newblock Isogeometric segmentation: Construction of auxiliarly curves.
\newblock {\em Computer-Aided Design}, 70:89--99, 2016.

\bibitem{Nguyen_Nitche_3Dcoupli_2014}
V.~P. Nguyen, P.~Kerfriden, M.~Brino, S.~P.~A. Bordas, and E.~Bonisoli.
\newblock Nitsche's method for two and three dimensional {NURBS} patch
  coupling.
\newblock {\em Computational Mechanics}, 53(6):1163--1182, 2014.

\bibitem{HLT:PauleyNguyenMayerSpehWeegerJuettler:2015a}
M.~Pauley, D.-M. Nguyen, D.~Mayer, J.~Speh, O.~Weeger, and B.~J\"uttler.
\newblock The isogeometric segmentation pipeline.
\newblock In B.~J\"uttler and B.~Simeon, editors, {\em Isogeometric Analysis
  and Applications IGAA 2014}, volume 107 of {\em Lecture Notes in Computer
  Science}, Heidelberg, 2015. Springer.

\bibitem{Rivierebook}
B.~Riviere.
\newblock {\em Discontinuous {G}alerkin methods for Solving Elliptic and
  Parabolic Equations}.
\newblock SIAM, Society for industrial and Applied Mathematics Philadelphia,
  2008.

\bibitem{Ruess_NitcCoplPtac_2015}
M.~Ruess, D.~Schillinger, A.~I. \"Ozcan, and E.~Rank.
\newblock Weak coupling for isogeometric analysis of non-matching and trimmed
  multi-patch geometries.
\newblock {\em Computer Methods in Applied Mechanics and Engineering},
  269(0):46 -- 71, 2014.

\bibitem{LT:Shumaker_Bspline_book}
L.~L. Schumaker.
\newblock {\em Spline {F}unctions: {B}asic {T}heory}.
\newblock Cambridge, University Press, third {E}dition edition, 2007.

\bibitem{Agns_Juttler_CurvSurf2016}
A.~Seiler and B.~J\"uttler.
\newblock Reparameterization and adaptive quadrature for the isogeometric
  discontinuous {G}alerkin method.
\newblock In {\em Mathematical Methods for Curves and Surfaces: 9th
  International Conference, MMCS 2016}, pages 251--269, 2017.

\bibitem{TagliabueQuart_2014}
A.~Tagliabue, L.~Ded\'e, and A.~Quarteroni.
\newblock Isogeometric analysis and error estimates for high order partial
  differential equations in fluid dynamics.
\newblock {\em Computers and Fluids}, 102:277 --303, 2014.

\bibitem{xu_mourain_bordas_CMAME_2018a}
G.~Xu, M.~Li, B.~Mourrain, T.~Rabczuk, J.~Xu, and S.~P.A. Bordas.
\newblock Constructing iga-suitable planar parameterization from complex cad
  boundary by domain partition and global/local optimization.
\newblock {\em Computer Methods in Applied Mechanics and Engineering},
  328(Supplement C):175 -- 200, 2018.

\bibitem{xu_mourain_duvingeau_Galligo_CAD_2013a}
G.~Xu, B.~Mourrain, R.~Duvigneau, and A.~Galligo.
\newblock {Analysis-suitable volume parameterization of multi-block
  computational domain in isogeometric applications}.
\newblock {\em {Computer-Aided Design}}, 45(2):395--404, 2013.

\bibitem{xu_mourain_duvingeau_Galligo_JCP_2013b}
Gang Xu, Bernard Mourrain, Régis Duvigneau, and André Galligo.
\newblock Constructing analysis-suitable parameterization of computational
  domain from cad boundary by variational harmonic method.
\newblock {\em Journal of Computational Physics}, 252(Supplement C):275 -- 289,
  2013.

\bibitem{ZHANG_DG_IGA_Overl_2017}
H.~Zhang, R.~Mo, and N.~Wan.
\newblock An {IGA} {D}iscontinuous {G}alerkin method on the union of overlapped
  patches.
\newblock {\em Computer Methods in Applied Mechanics and Engineering}, 326:446
  -- 480, 2017.

\end{thebibliography}

\end{document}